\documentclass[pdflatex,sn-mathphys-num]{sn-jnl}

\theoremstyle{thmstyleone}%
\newtheorem{theorem}{Theorem}
\newtheorem{proposition}{Proposition}%
\newtheorem{lemma}{Lemma}

\theoremstyle{thmstyletwo}%
\newtheorem{remark}{Remark}%

\theoremstyle{thmstylethree}%
\newtheorem{definition}{Definition}%

\usepackage{graphicx}
\usepackage{url}
\usepackage{xcolor}
\usepackage{etoolbox}    
\usepackage{tablefootnote}
\usepackage{threeparttable}
\usepackage{tablefootnote}
\usepackage{framed,multirow}
\usepackage{amssymb}
\usepackage{amsfonts}
\usepackage{mathrsfs}
\usepackage{upgreek}
\usepackage{epstopdf}
\ifpdf
\DeclareGraphicsExtensions{.eps,.pdf,.png,.jpg}
\else
\DeclareGraphicsExtensions{.eps}
\fi
\usepackage[export]{adjustbox}

\usepackage{amsmath}
\AtBeginDocument{\allowdisplaybreaks[4]}

\usepackage{xspace}
\usepackage{bm}
\usepackage{bbm}

\usepackage{physics}
\usepackage{stmaryrd}
\usepackage{enumitem}
\usepackage[caption=false]{subfig}
\usepackage{mathtools}

\usepackage{booktabs}
\usepackage{multirow, multicol, makecell}
\usepackage{longtable}
\usepackage{threeparttable}

\usepackage{cleveref}

\newtheorem{assumption}{Assumption}

\crefname{assumption}{Assumption}{Assumptions}
\crefname{definition}{Definition}{Definitions}
\crefname{lemma}{Lemma}{Lemmas}
\crefname{remark}{Remark}{Remarks}
\crefname{theorem}{Theorem}{Theorems}
\crefname{proposition}{Proposition}{Propositions}
\crefname{section}{Section}{Sections}
\crefname{figure}{Fig.}{Figs.}
\crefname{equation}{}{}
\crefname{table}{Table}{Tables}
\crefname{appendix}{Appendix}{Appendices}

\renewcommand{\H}[0]{\mathsf{H}}
\newcommand{\D}[0]{\mathsf{D}}
\newcommand{\Q}[0]{\mathsf{Q}}
\newcommand{\E}[0]{\mathsf{E}}

\newcommand{\A}[0]{\mathsf{A}}
\newcommand{\I}[0]{\mathsf{I}}

\newcommand{\J}[0]{\mathsf{J}}

\renewcommand{\P}[0]{\mathsf{P}}
\newcommand{\B}[0]{\mathsf{B}}
\newcommand{\R}[0]{\mathsf{R}}
\newcommand{\N}[0]{\mathsf{N}}

\newcommand{\etal}[0]{{et~al.\@}\xspace}
\newcommand{\eg}[0]{{e.g.\@}\xspace}
\newcommand{\ie}[0]{{i.e.\@}\xspace}

\newcommand{\ignore}[1]{} 

\newcommand{\IR}[1]{\mathbb{R}^{#1}}%

\newcommand{\poly}[1]{\ensuremath{\mathbb{P}^{#1}}({\Omega}_k)}

\newcommand{\der[2]}[]{\dfrac{{\rm d}#1}{{\rm d}#2}}%
\newcommand{\pder[2]}[]{\dfrac{\partial#1}{\partial#2}}
\newcommand{\fn}[1]{\mathcal{#1}}
\newcommand{\fnb}[1]{\bm{\mathcal{#1}}}

\begin{document}

\title[Convergence of entropy-stable continuous summation-by-parts discretizations of symmetric hyperbolic conservation laws]{Convergence of entropy-stable continuous summation-by-parts discretizations of symmetric hyperbolic conservation laws}

\author*[1]{\fnm{Zelalem Arega} \sur{Worku}}
\author[1]{\fnm{David C.} \sur{Del Rey Fern{\'a}ndez}}
\author[2]{\fnm{David W.} \sur{Zingg}}



\abstract{
    The Lax equivalence theorem guarantees convergence of stable and consistent discretizations for linear hyperbolic partial differential equations (PDEs). For nonlinear problems, however, stability and consistency alone do not generally guarantee convergence, even for smooth solutions, and existing convergence results typically rely either on projection-based error decompositions or on linearization arguments that do not directly extend to entropy-stable split-form discretizations. In particular, general convergence results for entropy-stable discretizations of hyperbolic PDEs are currently lacking, despite their widespread use. In this work, we prove convergence under smoothness assumptions on the exact solution and fluxes for entropy-stable split-form discretizations of scalar and symmetric hyperbolic systems with homogeneous flux functions within the continuous summation-by-parts (C-SBP) framework. The scalar inviscid Burgers equation is presented as a canonical example. The analysis is based on a stability-consistency argument that yields a nonlinear error evolution inequality whose solution provides an explicit upper bound on the numerical error. We show that, for sufficiently small mesh spacing, and for degree-$p$ C-SBP discretizations in $d$ spatial dimensions with $p>1+d/2$, this bound remains finite on any finite time interval and tends to zero as the mesh is refined, implying convergence despite the presence of local linear instabilities. The results help clarify the relationship between consistency, entropy stability, nonlinear error growth, and convergence for discretizations of nonlinear hyperbolic problems.
}

\keywords{Consistency, Entropy stability, Convergence, Linear stability, Summation-by-parts, Symmetric hyperbolic conservation law}



\maketitle

\begingroup
\renewcommand\thefootnote{}
\makeatletter
\renewcommand\@makefntext[1]{\noindent\footnotesize #1}
\makeatother
\footnotetext{
\textsuperscript{*} Corresponding author: Zelalem Arega Worku (zelalem.worku@uwaterloo.ca),\\
\hspace*{1em}Emails: David C. Del Rey Fern{\'a}ndez (ddelreyfernandez@uwaterloo.ca), David W. Zingg (david.zingg@utoronto.ca).\\
\textsuperscript{1} Department of Applied Mathematics, University of Waterloo, 200 University Ave. West, Waterloo, ON N2L 3G1, Canada.\\
\textsuperscript{2} Institute for Aerospace Studies, University of Toronto, 4925 Dufferin St, Toronto, ON M3H 5T6, Canada.
}
\endgroup

\section{Introduction}
The Lax equivalence theorem is a cornerstone of numerical discretization methods. It guarantees that a consistent and stable discretization of a linear hyperbolic partial differential equation (PDE) converges to the exact solution under mesh refinement. Extending this result to nonlinear PDEs is of paramount importance, since many physical systems are modeled by nonlinear PDEs. Convergence analysis in this setting, however, is substantially more delicate. To circumvent difficulties associated with nonlinearities and the possible formation of discontinuities, convergence results often assume sufficient smoothness of the exact solution and flux functions. Furthermore, existing convergence analyses typically treat discretizations that are not entropy stable; similar results for entropy-stable schemes remain limited. The purpose of this paper is to extend convergence results under smoothness assumptions to entropy-stable split-form continuous summation-by-parts (C-SBP) discretizations~\cite{hicken2016multidimensional,hicken2020entropy} of scalar and symmetric hyperbolic systems with homogeneous flux functions. 

Early convergence results for nonlinear hyperbolic problems were established using linearization arguments. In particular, Strang~\cite{strang1964accurate} showed that, under sufficient smoothness assumptions on the exact solution and fluxes, consistency and stability of the first variation (linearization) of a nonlinear discretization imply convergence for symmetric hyperbolic systems. The analysis proceeds by linearizing centered finite-difference discretizations about the exact solution and applying von Neumann stability conditions to the resulting linearized system. Given the concerns with linear stability issues for entropy-stable discretizations~\cite{gassner2022stability}, we do not wish to base our convergence results on the stability of the linearized system, although linearization-based arguments may still be applicable under additional structural assumptions, \eg, frozen-coefficient principle \cite{kreiss1989initial,mishra2010stability}, that are not pursued here. 

In the fully discrete discontinuous Galerkin (DG) setting coupled with a second-order explicit Runge--Kutta time discretization, and under sufficient smoothness assumptions, Zhang and Shu~\cite{zhang2004error} derived a priori error estimates for nonlinear scalar conservation laws. These results were subsequently extended to symmetrizable systems of conservation laws in~\cite{zhang2006error}, yielding convergence results for important systems such as the compressible Euler equations. More recently, Huang and Shu~\cite{huang2017error} generalized the scalar convergence analysis of~\cite{zhang2004error} to the case of inexact quadrature. These analyses use globally bounded flux derivatives, obtained by modifying the flux outside the range of the exact solution. Since DG convergence analyses rely on projection-based error decompositions that exploit the polynomial basis structure of the approximation space, they are not directly applicable to general SBP discretizations for which the numerical solution is not explicitly represented via a finite-element basis. Moreover, the existing DG analyses obtain \(L^2\)-stability through interface dissipation furnished by monotone numerical fluxes, and the error estimates are closed by induction arguments that use the smallness of the mesh spacing, whereas the present work seeks to establish convergence by directly exploiting the stability of the SBP discretization.

Entropy-stable and split-form discretizations have gained significant attention in recent years due to their robustness and favorable nonlinear stability properties. Existing analyses of these methods primarily focus on stability or entropy dissipation properties and generally do not address convergence in the sense of vanishing numerical error under mesh refinement. An exception is the contribution of Gassner \etal~\cite{gassner2022stability}, which showed that the linearization of entropy-stable schemes can exhibit local linear instabilities, in the sense that the discrete energy of the linearized scheme may grow faster than that of the corresponding continuous problem, which could affect their convergence. While~\cite{gassner2022stability} suggests that split-form discretizations may nevertheless converge despite this local linear instability, no convergence proof is provided. To address this, the present work develops a convergence analysis for entropy-stable split-form discretizations within the C-SBP framework that do not depend on Strang's~\cite{strang1964accurate} linearization arguments, bypassing the linear stability issues identified in \cite{gassner2022stability}. The analysis is based on a stability--consistency argument that leads to a constant-coefficient Riccati ordinary differential equation (ODE) type error evolution inequality. This inequality provides an explicit upper bound on the numerical error and makes the role of nonlinear error growth transparent. We show that, for sufficiently small mesh spacing, this bound remains finite on a finite time interval and vanishes as the mesh is refined, implying convergence. The resulting error evolution framework clarifies the relationship between consistency, entropy stability, nonlinear error growth, and convergence for discretizations of nonlinear hyperbolic problems and, in line with related long-time error behavior analyses of high-order methods~\cite{nordstrom2008error,kopriva2017error,offner2019error}, may also provide a useful tool for interpreting and comparing nonlinear error growth through the structure and coefficients of the associated error evolution equation.

The results in this paper rely on the energy or entropy stability of the SBP discretization, the consistency of the scheme, smoothness assumptions on the exact solution and flux functions, and the global boundedness of the second derivatives of the flux functions. We restrict our attention to C-SBP discretizations~\cite{hicken2016multidimensional,hicken2020entropy} in order to simplify the analysis and avoid technical complications associated with simultaneous approximation terms (SATs), also known as numerical fluxes in the DG literature. Extending the present analysis to discontinuous SBP discretizations is an important direction for future work. Moreover, we consider only nonlinear scalar conservation laws and symmetric hyperbolic systems with homogeneous flux functions. While symmetry provides a natural energy framework, relatively few physical systems are symmetric, and many important models, such as the compressible Euler equations, are only symmetrizable, and are, consequently, outside the scope of this work. As mentioned earlier, under sufficient smoothness assumptions, a priori error estimates for symmetrizable systems have been derived by Zhang and Shu~\cite{zhang2006error}. It is not straightforward, however, to adapt these error estimates to the semi-discrete setting considered here, since they rely on time-dependent weighted norms involving Jacobian matrices evaluated at elementwise averaged states at each time step. Developing a comparable framework for semi-discrete formulations and extending the present results to symmetrizable systems therefore remains an open problem.

The remainder of the paper is organized as follows. In \cref{sec:preliminaries}, we introduce the C-SBP framework and review the properties of entropy-stable and split-form discretizations. \cref{sec:scalar} presents the convergence analysis for scalar conservation laws. It also presents important properties of the constant-coefficient Riccati ODE relevant for the convergence analysis. In \cref{sec:systems}, the analysis is extended to symmetric hyperbolic systems. Finally, conclusions are presented in \cref{sec:conclusions}.

\section{Preliminaries}\label{sec:preliminaries}
We are interested in understanding the convergence of nonlinear symmetric hyperbolic PDEs with homogeneous flux functions. We consider the following equation on a periodic domain, \(\Omega \subset \IR{d}\),
\begin{equation}\label{eq:nonlinear_prob}
    \pder[\fnb{U}]{t} + \sum_{i=1}^{d}\pder[\fnb{F}_i]{x_{i}} = 0,
\end{equation}
with smooth initial condition, \(\fnb{U}(t=0)\eqqcolon\,\fnb{U}_{0}\) for \(t\in [0,T]\). We assume that both the solution, \(\fnb{U}\in \IR{n_c}\), and the flux, \(\fnb{F}_{i}(\fnb{U})\in \IR{n_c}\), are smooth over the time interval, where \(n_c\) is the number of component equations. In the case of scalar problems, we use script letters without boldface, \eg, \(\fn{U}\), to represent functions. 

Our aim is to show that the error associated with the C-SBP discretization of \cref{eq:nonlinear_prob} vanishes, i.e., the numerical solution converges to the exact solution, under mesh refinement, assuming that a unique solution to the problem exists. To this end, we first introduce C-SBP operators and state important definitions. 

\subsection{Continuous SBP operators}
We assume that the domain \(\Omega\) is compact and connected, and that it is tessellated into \(n_{e}\) non-overlapping elements, \(\fn{T}_{h}\coloneqq \{\{\Omega_{k}\}_{k=1}^{n_e} \colon \Omega=\cup_{k=1}^{n_{e}}\Omega_{k}\}\). The boundaries of each element are assumed to be piecewise smooth and will be referred to as facets
or interfaces, and the union of the facets of element \(\Omega_{k}\) is denoted by \(\Gamma_{k} \coloneqq \partial\Omega_{k}\). We follow the notation of \cite{hicken2020entropy} to introduce local and global C-SBP operators. To move between global and local degrees of freedom we use the restriction operator, \(\P_k\), mapping global indices to the local node ordering on element \(\Omega_k\), and the prolongation matrix, \(\P_k^T\), for the reverse operation.

\paragraph{Local operators.} On element \(\Omega_{k}\), we have the derivative operator in the \(x_{i}\) direction, \(\D_{x_i k}\in \IR{n_p \times n_p}\), that has the decomposition
\begin{equation}
    \D_{x_i k} \,\coloneqq\, \H_k^{-1} \Q_{x_i k},
\end{equation}
where \(n_{p}\) is the number of volume nodes, and \(\H_{k}\) is a symmetric positive definite (SPD) matrix. The \(\Q_{x_i k}\) matrix satisfies the SBP property
\begin{equation}\label{eq:sbp_property}
    \Q_{x_i k} + \Q_{x_i k}^T = \E_{x_i k},
\end{equation}
where \(\E_{x_i k}\) has the structure 
\begin{equation}
    \E_{x_i k} \,\coloneqq\, \sum_{\gamma \in \Gamma_{k}}\R_{\gamma k}^T\B_{\gamma}\N_{x_{i}\gamma k}\R_{\gamma k}.
\end{equation}
Here, \(\R_{\gamma k}\in \IR{n_f\times n_p}\) is an extrapolation matrix from the volume to the facet nodes, the diagonal matrix \(\B_{\gamma}\in \IR{n_f\times n_f}\) contains reference facet quadrature rule weights that are exact for polynomials up to degree \(2p\), \(\N_{x_i \gamma k}\in \IR{n_f\times n_f}\) is a diagonal matrix containing the evaluation of the normal vectors at the facet nodes, and \(n_f\) is the number of facet nodes. In this work, we consider SBP operators with diagonal \(\H\) and diagonal-\(\E\) matrices; hence, the volume and facet nodes are collocated. We note that C-SBP operators are constructed using SBP diagonal-\(\E\) operators \cite{chen2017entropy,hicken2016multidimensional,worku2024quadrature}. 

The SBP derivative operator satisfies the accuracy condition 
\begin{equation}
    [\D_{x_{i}k}\bm{p}_{k}]_{j}= \pder[\fn{P}_{k}]{x_{i}}\left(\bm{x}_{k}^{(j)}\right),
\end{equation}
for \(\fn{P}_k\in \poly{p}\), where \(\bm{p}_{k}\) is a vector containing the evaluation of \(\fn{P}_{k}\) at the volume nodes of \(\Omega_{k}\), \(\bm{x}_{k}^{(j)}\) is the tuple of coordinates of the \(j\)-th volume node of element \(\Omega_{k}\), and \(\poly{p}\) denotes the space of polynomials of degree up to \(p\) on \(\Omega_{k}\). 

\paragraph{Global C-SBP operators.} Using the local-to-global restriction/prolongation operators (see \cite{hicken2020entropy} for details) we assemble global matrices \(\Q_{x_i}\) and \(\H\) that act on the single-valued global numerical solution vector, \(\bm{u}_h\). These global matrices are defined as
\begin{align*}
    \Q_{x_i} \coloneqq \sum_{\Omega_{k} \in \fn{T}_{h}} \P_k^T \Q_{x_i k} \P_k,
    \qquad
    \H \coloneqq \sum_{\Omega_{k} \in \fn{T}_{h}} \P_k^T \H_k \P_k.
\end{align*}
The global differentiation operator in the \(x_{i}\) direction is then computed as
\begin{align}\label{eq:global_D_construction}
    \D_{x_i} &\,\coloneqq\, \H^{-1} \Q_{x_i}.
\end{align}

\paragraph{Periodic cancellation.} For periodic boundary conditions the element boundary contributions cancel pairwise, so the symmetric part of the global \(\Q_{x_i}\) matrix vanishes:
\begin{equation}\label{eq:Q+Q^T=0}
    \Q_{x_i} + \Q_{x_i}^T = \sum_{\Omega_{k} \in \fn{T}_{h}} \P_k^T(\Q_{x_i k} + \Q_{x_i k}^T)\P_k
    = \sum_{\Omega_{k} \in \fn{T}_{h}} \P_k^T \E_{x_i k} \P_k = \mathsf{0}.
\end{equation}
This identity will be used below when deriving global energy identities. Further discussions on SBP operators can be found in the review papers \cite{fernandez2014review,svard2014review}.

\subsection{Error and norm definitions}
For scalar problems, the error on element \(\Omega_{k}\) is defined as 
\begin{equation}
    \bm{e}_{k} \coloneqq \bm{u}_k - \bm{u}_{h,k},
\end{equation}
where \(\bm{u}_{k}\) is a vector containing the restriction of the exact solution at the solution nodes of \(\Omega_{k}\) and \(\bm{u}_{h,k}\) is a vector containing the numerical solution. The global error is defined in a similar manner, 
\begin{equation}\label{eq:error_def}
    \bm{e} \coloneqq \bm{u} - \bm{u}_{h}.
\end{equation}

Since the global norm matrix, \(\H\), is SPD, it defines a norm, \ie, 
\begin{equation}\label{eq:norm_def}
    \|\bm{v}\|_{\H} = \sqrt{\bm{v}^T\H\bm{v}}.
\end{equation} 
Throughout this paper, \(\|\cdot\|_{\H}\) denotes the norm induced by the relevant \(\H\) matrix (global or elementwise, and scalar or block-extended for systems); the intended meaning will be clear from the context.
For scalar problems, we define global norms of the solution and truncation errors by
\begin{equation}\label{eq:global_error_norms}
    \|\bm{e}\|_{\H}^2 \coloneqq \sum_{\Omega_{k}\in \fn{T}_h} \|\bm{e}_{k}\|_{\H}^2, \qquad \|\bm{\tau}\|_{\H}^2 \coloneqq \sum_{\Omega_{k}\in \fn{T}_h} \|\bm{\tau}_{k}\|_{\H}^2.
\end{equation}
For systems of equations, the solution, flux, and error vectors are structured as
\begin{align*}
    \bm{v}           & = [\bm{v}_{1},\dots,\bm{v}_{n_{e}}],
    \tag{vector on all elements}
    \\
    \bm{v}_{k}       & = [\bm{v}_{k}^{(1)},\dots, \bm{v}_{k}^{(n_{p})}], \quad k \in [1,n_{e}],
    \tag{vector on element \(\Omega_{k}\)}
    \\
    \bm{v}_{k}^{(m)} & = [\bm{v}_{k,1}^{(m)},\dots, \bm{v}_{k,n_{c}}^{(m)}], \quad k \in [1,n_{e}], \; m \in [1,n_p]
    \tag{vector on node \(m\)}.
\end{align*}
In this case, the norm for the global error vector is defined by
\begin{equation}\label{eq:error_def_system}
    \|\bm{e}\|_{\H}^2 = \sum_{\Omega_{k}\in\fn{T}_{h}}\sum_{i=1}^{n_c}\|\bm{e}_{k,i}\|_{\H}^2,
    \qquad 
    \|\bm{e}\|_2^2 = \sum_{\Omega_{k}\in\fn{T}_{h}}\sum_{i=1}^{n_c}\|\bm{e}_{k,i}\|_2^2 = \sum_{\Omega_{k}\in\fn{T}_{h}}\|\bm{e}_{k}\|_2^2,
\end{equation}
where the norm of the error for each component on \(\Omega_{k}\) is defined by
\begin{equation}\label{eq:error_elem_def_system}
    \|\bm{e}_{k,i}\|_{\H}^2 = \sum_{m=1}^{n_{p}}w_{k,m}\bm{e}_{k,i}^{(m)}\bm{e}_{k,i}^{(m)} = \bm{e}_{k,i}^T\H_{k}\bm{e}_{k,i},
\end{equation}
and the error on element \(\Omega_{k}\), node \(m\), and the \(i\)-th variable is defined as
\begin{equation}
    \label{eq:error_node_def_system}
    e_{k,i}^{(m)} = u_{k,i}^{(m)} - u_{h,k,i}^{(m)}.
\end{equation}

\subsection{Homogeneity of the flux functions}
An important assumption of this work is homogeneity of the flux functions, which is crucial for establishing discrete entropy stability of the C-SBP discretization of \cref{eq:nonlinear_prob}.

\begin{definition}
     The flux \(\fnb{F}(\fnb{U})\) is a degree \(\beta\) homogeneous function in the argument \(\fnb{U}\) if \(\fnb{F}(\eta \fnb{U})=\eta^{\beta}\fnb{F}(\fnb{U})\), for  \(\eta \neq 0\).
\end{definition}

By Euler's theorem for homogeneous functions, we have the property 
\begin{equation}\label{eq:homogeneity}
    \A\,\fnb{U} = \beta \fnb{F},
\end{equation}
where \(\A \coloneqq \partial\fnb{F}/\partial\fnb{U}\). For the Burgers equation, for instance, the flux, \(\fn{F}(\fn{U}) = \fn{U}^2/2\) is degree \(\beta=2\) homogeneous with respect to \(\fn{U}\).

\section{Entropy stability of the split-form discretization}\label{sec:split_form_stability}
The stability of the numerical solution is imperative for convergence analysis. We consider the split form of \cref{eq:nonlinear_prob} given by
\begin{equation}\label{eq:nonlinear_prob_split}
    \pder[\fnb{U}]{t} + \frac{\beta}{\beta + 1}\sum_{i=1}^{d} \pder[\fnb{F}_i]{x_{i}} + \frac{1}{\beta + 1}\sum_{i=1}^{d}\A_{i}\pder[\fnb{U}]{x_{i}} = 0,
\end{equation} 
which, when discretized using C-SBP operators, results in a stability proof of the semi-discrete scheme that is analogous to the continuous proof. We note that \(\beta\) is the degree of homogeneity of the flux with respect to the conservative variable. If the conservative variables are not the same as the entropy variables, a similar homogeneity property with respect to entropy variables can also be used for the splitting, \eg, see \cite{olsson1994energy,gerritsen1996designing,yee2000entropy,sjogreen2019entropy,worku2023entropy}; however, we do not consider those cases in this paper. As an example, for the Burgers equation, the conservative variable is also the entropy variable. Given, \(\beta=2\) and \(\fn{F}=\fn{U}^2/2\) for the Burgers equation, the splitting in \cref{eq:nonlinear_prob_split} simplifies to the widely used form 
\begin{equation}\label{eq:nonlinear_prob_split_burgers}
    \pder[\fn{U}]{t} + \sum_{i=1}^{d}\frac{1}{3} \pder[\fn{U}^2]{x_{i}} + \frac{1}{3}\fn{U}\pder[\fn{U}]{x_{i}} = 0.
\end{equation} 
Since the scalar split form is a special case of the split form for systems of equations, \cref{eq:nonlinear_prob_split}, we only show entropy stability of the latter. For the symmetric and homogeneous class considered here, we can take a quadratic entropy function in \(\fnb{U}\); hence, entropy stability is equivalent to an \(L^2\) energy bound on \(\fnb{U}\). 

\subsection{Continuous stability analysis}
To study the stability of \cref{eq:nonlinear_prob_split}, we premultiply by \(\fnb{U}^T\) and integrate over the domain, 
\begin{align}\label{eq:energy_0}
    \int_{\Omega}\fnb{U}^T\pder[\fnb{U}]{t} \dd{\Omega}= - \frac{\beta}{\beta + 1} \sum_{i=1}^{d}\int_{\Omega}\fnb{U}^T\pder[\fnb{F}_i]{x_{i}} \dd{\Omega}-  \frac{1}{\beta + 1}\sum_{i=1}^{d}\int_{\Omega}\fnb{U}^T\A_{i}\pder[\fnb{U}]{x_{i}}\dd{\Omega}.
\end{align}
Simplifying the temporal term, using the homogeneity property, \cref{eq:homogeneity}, the symmetry of \(\A_{i}\), and applying integration by parts to the last term on the right-hand side (RHS) of \cref{eq:energy_0} gives, 
\begin{align}\label{eq:energy_1}
    \frac{1}{2}\der[]{t}\|\fnb{U}\|_{2}^2 &=  - \frac{\beta}{\beta+1} \sum_{i=1}^{d}\int_{\Gamma}\fnb{F}_{i}^T \fnb{U} \,n_{i}\dd{\Gamma}
     = 0,
\end{align}
where the boundary term vanishes due to the periodic boundary conditions. Integrating in time, we obtain the energy conservation statement, 
\begin{equation}
    \|\fnb{U}(T)\|_{2}^{2} = \|\fnb{U}_{0}\|_{2}^{2}.
\end{equation}

\subsection{Semi-discrete entropy-stability analysis}
The split-form semi-discretization of \cref{eq:nonlinear_prob_split} with C-SBP operators is written as 
\begin{equation}\label{eq:split_form_disc}
    \der[\bm{u}_{h}]{t} + \frac{\beta}{\beta + 1}\sum_{i=1}^{d}\overline{\D}_{x_i}\bm{f}_{i}(\bm{u}_{h}) + \frac{1}{\beta + 1}\sum_{i=1}^{d}\bar{\A}_{i}\overline{\D}_{x_i}\bm{u}_{h} = 0.
\end{equation}
where \(\bar{\A}_{i}\) is a symmetric block diagonal matrix containing \(\A_{i}(\bm{u}^{(j)}_{h})\in\IR{n_c\times n_c}\) at its \(j\)-th diagonal block associated with the \(j\)-th global node and \(\overline{\D}_{x_i}\) is constructed using \(\Q_{x_i k}\otimes\I_{n_{c}}\), \(\H_{k}\otimes\I_{n_{c}}\), and \(\P_{k}\otimes\I_{n_{c}}\) as in \cref{eq:global_D_construction}. Here, \(\otimes\) denotes the Kronecker product and \(\I_{n_c} \in \IR{n_c\times n_c}\) is the identity matrix. Following the continuous stability analysis, we premultiply \cref{eq:split_form_disc} by \(\bm{u}_{h}^T\overline{\H}\) and simplify to find, 
\begin{align}\label{eq:energy_1_disc}
    \bm{u}_{h}^T\overline{\H}\der[\bm{u}_{h}]{t} &= -\frac{\beta}{\beta + 1}\sum_{i=1}^{d}\bm{u}_{h}^T\overline{\Q}_{x_i}\bm{f}_{i}(\bm{u}_{h}) - \frac{1}{\beta + 1}\sum_{i=1}^{d}\bm{u}_{h}^T\overline{\H}\bar{\A}_{i}\overline{\D}_{x_i}\bm{u}_{h}.
\end{align}
Due to the block diagonal structure of \(\bar{\A}_{i}\) and the application of the Kronecker product in constructing the global diagonal \(\overline{\H}\) matrix, we have the commutativity property,
\begin{equation}\label{eq:HA=AH}
    \overline{\H} \bar{\A}_{i} = \bar{\A}_{i}\overline{\H}. 
\end{equation} 
Applying \cref{eq:HA=AH} in \cref{eq:energy_1_disc}, using the homogeneity identity, \cref{eq:homogeneity}, to write \(\bm{u}_{h}^T\bar{\A}_{i} = \beta \bm{f}_{i}^T\), and using the SBP property, we find 
\begin{align}\label{eq:energy_2_disc}
    \bm{u}_{h}^T\overline{\H}\der[\bm{u}_{h}]{t} &= -\frac{\beta}{\beta + 1}\sum_{i=1}^{d}\bm{u}_{h}^T\overline{\Q}_{x_i}\bm{f}_{i}(\bm{u}_{h}) - \frac{\beta}{\beta + 1}\sum_{i=1}^{d}\bm{f}_i^T(\bm{u}_{h})\overline{\Q}_{x_i}\bm{u}_{h},
    \nonumber
    \\
    \frac{1}{2}\der[]{t}\|\bm{u}_{h}\|_{\H}^2 &= - \frac{\beta}{\beta + 1}\sum_{i=1}^{d}\bm{u}_{h}^T\overline{\E}_{x_i} \bm{f}_i(\bm{u}_{h})
    = 0,
\end{align}
where we have used the SBP property, \cref{eq:sbp_property}, to arrive at the third line and applied the periodicity of the problem to eliminate the resulting boundary term. Hence, as in the continuous case, upon integration over the time interval, we find discrete energy conservation,
\begin{equation}\label{eq:solution_bound}
    \|\bm{u}_{h}(T)\|_{\H}^{2} = \|\bm{u}_{0}\|_{\H}^{2},
\end{equation}
where \(\bm{u}_{0}\) is the restriction of the initial condition at the solution nodes. This guarantees that the numerical solution of the split-form discretization, \cref{eq:split_form_disc}, is bounded until the final time, \(T\).

\section{Convergence analysis for nonlinear scalar conservation laws}\label{sec:scalar}
In this section, we study convergence of C-SBP discretizations for nonlinear scalar conservation laws. For notational simplicity, we present the analysis in a single spatial direction and suppress the index \(i\) in the derivative operators and discrete vectors; the multidimensional case follows by repeating the same estimates in each coordinate direction and summing the resulting bounds over \(i=1,\dots,d\). The effect of this on the final form of the error equation is to increase the coefficients in the Riccati ODE, \(a\), \(b\), and \(c\), which are defined in the next section. To relate these coefficients to mesh refinement, we assume the following standard quasi-uniformity conditions.
\begin{assumption}\label{assump:mesh}
Let \(\{\fn{T}_h\}_{h\to 0}\) be a family of meshes of \(\Omega\). For each \(\fn{T}_h\) and each element \(\Omega_k\in\fn{T}_h\), let \(\H_k=\mathrm{diag}(w_{k,1},\dots,w_{k,n_p})\) with \(w_{k,m}>0\), and define
\[
w_{\min}\,\coloneqq\, \min_{\Omega_{k}\in\fn{T}_h}\min_{m\in[1,n_p]} w_{k,m},\qquad
w_{\max}\,\coloneqq\, \max_{\Omega_{k}\in\fn{T}_h}\max_{m\in[1,n_p]} w_{k,m},\qquad
h \,\coloneqq\, \left({|\Omega|}/{n_e}\right)^{1/d},
\]
where \(|\Omega|\) is the volume of the domain and \(h\) is the nominal mesh size. Assume the family is quasi-uniform in the sense that there exist constants \(0<c\le C\) that are uniform over the family, \ie, independent of \(h\), such that
\begin{equation}\label{eq:bound_on_H_ii}
c\,h^{d} \le w_{k,m}\le C\,h^{d},\qquad \forall\,\Omega_k\in\fn{T}_h,\ \forall\,m.
\end{equation}
In particular, \(w_{\max}/w_{\min}\le C/c=\fn{O}(1)\) as \(h\to 0\).
\end{assumption}

We next consider the split-form discretization of \cref{eq:nonlinear_prob},
\begin{equation}\label{eq:csbp_disc_split_form}
    \der[\bm{u}_{h}]{t}
    = -\alpha_{1}\D f(\bm{u}_{h})
    - \alpha_{2}\A(\bm{u}_{h})\D \bm{u}_{h},
\end{equation}
where \(\A(\bm{u}_{h})\) is a diagonal matrix containing the evaluation \(\partial{\fn{F}}/\partial{\fn{U}}\) at \(\bm{u}_{h}\), and \(\alpha_{1}\) and \(\alpha_{2}\) are fixed splitting coefficients determined by the splitting procedure described in \cref{sec:split_form_stability} and coincide with the coefficients of the corresponding continuous split form.

Subtracting \cref{eq:csbp_disc_split_form} from the substitution of the exact solution into \cref{eq:csbp_disc_split_form}, we obtain the error equation
\begin{equation}\label{eq:error_split_form}
    \der[\bm{e}]{t}
    = -\alpha_{1}\D (f(\bm{u}) - f(\bm{u}_{h}))
    - \alpha_{2} \bigl(\A(\bm{u})\D\bm{u} - \A(\bm{u}_{h})\D\bm{u}_{h}\bigr)
    + \bm{\tau},
\end{equation}
where \(\bm{\tau}\) is the truncation error vector defined as
\begin{equation}\label{eq:tau_csbp_split_form}
    \bm{\tau}\coloneqq \der[\bm{u}]{t} + \alpha_{1}\D f(\bm{u}) + \alpha_{2}\A(\bm{u})\D \bm{u} = \fn{O}(h^p).
\end{equation}
Since the flux is smooth with globally bounded second derivative by assumption and the numerical solution is bounded, \ie, \(\|\bm{u}_{h}(t)\|_{\H}< \infty\) for \(t\in[0,T]\), a first-order Taylor expansion gives (see \cref{app:remainder_bound})
\begin{equation}\label{eq:flux_jump_taylor}
    f(\bm{u})-f(\bm{u}_h) = \A(\bm{u}) \bm{e} - R_1(\bm{e}),
\end{equation}
where \(R_1(\bm{e})\) is a remainder term, which is second-order in \(\bm{e}\). Substituting \cref{eq:flux_jump_taylor} into \cref{eq:error_split_form}, premultiplying the result by \(\bm{e}^T\H\), and simplifying, we arrive at the error equation
\begin{align}\label{eq:energy_error_eq_nonlinear_split0}
\frac{1}{2}\der[]{t}\|\bm{e}\|_{\H}^2
&=
-\alpha_{1}\underbrace{\bm{e}^T\Q\A(\bm{u})\bm{e}}_{\coloneqq\text{Term I}}
+\alpha_{1}\underbrace{\bm{e}^T\Q R_1(\bm{e})}_{\coloneqq\text{Term II}} 
-\alpha_{2}\underbrace{\bigl(\bm{e}^T\A(\bm{u})\Q\bm{u} - \bm{e}^T\A(\bm{u}_{h})\Q\bm{u}_{h}\bigr)}_{\coloneqq\text{Term III}}
+\underbrace{\bm{e}^T\H\bm{\tau}}_{\coloneqq\text{Term IV}}.
\end{align}
where we have used the commutativity of diagonal matrices, \ie, \(\H\A = \A\H\). Next, we derive separate bounds for Terms I-IV on the right-hand side of \cref{eq:energy_error_eq_nonlinear_split0}.

\subsection{Bound for Term I}
The first term on the RHS of \cref{eq:energy_error_eq_nonlinear_split0} can be written as
\begin{align}
    \bm{e}^T\Q\A(\bm{u})\bm{e} & = \bm{e}^T\Big(\sum_{\Omega_k\in\fn{T}_{h}}\P_{k}^T \Q_{k}\P_{k}\Big)\A(\bm{u})\bm{e}
     = \sum_{\Omega_k\in\fn{T}_{h}}\bm{e}_{k}^T\Q_{k}\A(\bm{u}_{k})\bm{e}_{k}
    \nonumber
    \\
                               & = \frac{1}{2}\sum_{\Omega_k\in\fn{T}_{h}}\bm{e}_{k}^T
    \big(\Q_{k}\A(\bm{u}_{k}) + (\Q_{k}\A(\bm{u}_{k}))^T\big)\bm{e}_{k}.
\end{align}
Since \(\A_{k} = \A_{k}^T\) and \(\Q_{k}^T = -\Q_{k} + \E_{k}\), we find
\begin{align}
    \bm{e}^T\Q\A(\bm{u})\bm{e}
     & = \frac{1}{2}\sum_{\Omega_k\in\fn{T}_{h}}\bm{e}_{k}^T
    \big(\Q_{k}\A(\bm{u}_{k}) - \A(\bm{u}_{k}) \Q_{k} \big)\bm{e}_{k} + \frac{1}{2}\sum_{\Omega_k\in\fn{T}_{h}}\bm{e}_{k}^T \E_{k}\A(\bm{u}_{k})\bm{e}_{k}
    \nonumber
    \\
     & =\frac{1}{2}\sum_{\Omega_k\in\fn{T}_{h}}\bm{e}_{k}^T
    \big(\Q_{k}\A(\bm{u}_{k}) - \A(\bm{u}_{k}) \Q_{k} \big)\bm{e}_{k},
\end{align}
where the last term in the first equality vanishes because \(\E_{k}\) is diagonal by assumption (and hence, at the collocated facet and volume nodes, the contributions from neighboring elements cancel exactly when summing over all elements).

We note that
\[
    (\Q_{k} \A_{k} - \A_{k}\Q_{k})_{ij} = [\Q_{k}]_{ij} \; (f_{k,j}' - f_{k,i}'),
\]
where \(f'_{k,i}=\frac{\partial f_{k}}{\partial u}(u_{i})\).
Now, define a skew-symmetric flux Jacobian difference matrix, \(\A^*\), as
\[
    [\A^*_{k}]_{ij} \coloneqq f_{k,j}' - f_{k,i}'.
\]
Then, we have
\[
    \Q_{k} \A(\bm{u}_{k}) - \A(\bm{u}_{k})\Q_{k} = \Q_{k} \circ \A^{*}_{k},
\]
where \(\circ\) denotes the Hadamard (elementwise) product. Consequently, we can write
\begin{equation}\label{eq:hadamard_QF}
    \bm{e}^T\Q\A\bm{e} = \frac{1}{2}\sum_{\Omega_k\in\fn{T}_{h}}\bm{e}^T_{k}(\Q_{k} \circ \A_{k}^{*})\bm{e}_{k}.
\end{equation}
In the next lemma, we show that the left hand side (LHS) of \cref{eq:hadamard_QF}, \ie, Term I, is bounded by a constant times the \(L^2\) norm of the error.
\begin{lemma}\label{lem:hadamard_bound}
    Let \(\bm{e}_{k} \in \mathbb{R}^{n_{p}}\), \(\Q_{k} \in \mathbb{R}^{n_{p}\times n_{p}}\), \(\A^{*}_{k} \in \mathbb{R}^{n_{p}\times n_{p}}\) be a skew-symmetric flux Jacobian difference matrix, and define
    \begin{align}
        |\A^{*}_{k}|_{\infty} &\coloneqq \max_{i,j}|[\A^{*}_k]_{ij}| \text{ with} \;i,j \in \{1,\dots,n_{p}\},
        \label{eq:def_f*k}
        \\
        |\A^{*}|_{\infty} &\coloneqq \max_{\Omega_{k}\in \fn{T}_{h}}|\A^{*}_{k}|_{\infty},
        \label{eq:def_f*_inf}
        \\
        \|\Q_{*}\|_{2} &\coloneqq \max_{\Omega_{k}\in \fn{T}_{h}}\|\Q_{k}\|_{2},
        \label{eq:def_Q*}
        \\
        c_{F} &\coloneqq \frac{n_{p}^{3/2}}{2} |\A^{*}|_{\infty}\,\|\Q_*\|_{2}
        \label{eq:c_F}.
    \end{align}
    Then, we have 
    \begin{equation}\label{eq:term1_bound}
        |\bm{e}^T\Q\A\bm{e}| \le \frac{1}{2}\sum_{\Omega_k\in\fn{T}_{h}}|\bm{e}_{k}^{T}(\A_{k}^{*}\circ\Q_{k})\bm{e}_{k}|
        \le \sum_{\Omega_k\in\fn{T}_{h}}\frac{n_{p}^{3/2}}{2}|\A^{*}_{k}|_{\infty}\,\|\Q_{k}\|_{2}\,\|\bm e_{k}\|_{2}^{2}
        \le c_{F}\|\bm e\|_{2}^{2}.
    \end{equation}
\end{lemma}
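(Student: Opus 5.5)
The plan is to prove the three inequalities in \cref{eq:term1_bound} from left to right.

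The first inequality is immediate. Start from the identity \cref{eq:hadamard_QF}, which gives $\bm{e}^T\Q\A\bm{e}=\frac12\sum_k \bm{e}_k^T(\Q_k\circ\A_k^*)\bm{e}_k$. Apply the triangle inequality to the sum, and note that the Hadamard product is commutative, so $\Q_k\circ\A^*_k=\A^*_k\circ\Q_k$.

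The core step is an elementwise bound on a single element,
\[
|\bm{e}_k^T(\A_k^*\circ\Q_k)\bm{e}_k| \le n_p^{3/2}\,|\A_k^*|_\infty\,\|\Q_k\|_2\,\|\bm{e}_k\|_2^2 .
\]
Write the quadratic form as $\sum_{i,j}[\A^*_k]_{ij}[\Q_k]_{ij}e_ie_j$. Pull out $\max_{ij}|[\A^*_k]_{ij}|$ to get at most $|\A^*_k|_\infty\sum_{i,j}|[\Q_k]_{ij}|\,|e_i|\,|e_j|$.

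The remaining sum is $|\bm{e}_k|^T|\Q_k|\,|\bm{e}_k|$, where $|\cdot|$ denotes the entrywise absolute value. It is therefore bounded by $\||\Q_k|\|_2\,\|\bm{e}_k\|_2^2$. Next use the norm equivalences $\||\Q_k|\|_2\le\||\Q_k|\|_F=\|\Q_k\|_F\le\sqrt{n_p}\,\|\Q_k\|_2$. This actually yields a factor $n_p^{1/2}$, which is better than the stated $n_p^{3/2}$.

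A cruder route also gives the stated constant. Bound $\sum_{ij}|e_i||e_j|\le(\sum_i|e_i|)^2\le n_p\|\bm{e}_k\|_2^2$ and use $|[\Q_k]_{ij}|\le\|\Q_k\|_2$. This gives a factor $n_p$, which is again at most $n_p^{3/2}$ since $n_p\ge1$. Either route suffices.

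I will present the Frobenius-norm route and remark that $n_p^{1/2}\le n_p^{3/2}$. The only care needed is to justify that the entrywise absolute value does not increase the Frobenius norm and that $\|M\|_2\le\|M\|_F\le\sqrt{n_p}\|M\|_2$. This is the main, though mild, obstacle.

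The last inequality follows by taking maxima over elements. Replace $|\A^*_k|_\infty$ by $|\A^*|_\infty$ and $\|\Q_k\|_2$ by $\|\Q_*\|_2$, using the definitions \cref{eq:def_f*_inf,eq:def_Q*}. Then factor out $c_F$ from \cref{eq:c_F}, leaving $\sum_k\|\bm{e}_k\|_2^2$.

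Finally, identify $\sum_k\|\bm{e}_k\|_2^2$ with $\|\bm{e}\|_2^2$ via the elementwise definition \cref{eq:error_def_system}. Shared interface nodes are counted once per element in this elementwise norm, consistent with how \cref{eq:global_error_norms} defines the global norm. This convention should be stated explicitly in the proof.
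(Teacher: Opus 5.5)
Your proposal is correct, but it reaches the per-element estimate by a different route than the paper.

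\textbf{The paper's route.} The paper bounds the quadratic form by $\|\A^*_k\circ\Q_k\|_2\|\bm e_k\|_2^2$. It then invokes the Schur-type submultiplicativity $\|\A^*_k\circ\Q_k\|_2\le\|\A^*_k\|_2\|\Q_k\|_2$, citing Johnson and Bhatia. Finally it converts the spectral norm of $\A^*_k$ back to its largest entry via $\|\A^*_k\|_2\le\sqrt{n_p}\|\A^*_k\|_\infty\le n_p^{3/2}|\A^*_k|_\infty$.

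The first and last inequalities of the lemma are handled exactly as you describe. The first is the triangle inequality applied to the identity for $\bm e^T\Q\A\bm e$. The last takes maxima over elements and uses the elementwise definition of $\|\bm e\|_2^2$. Your remark that this norm counts interface nodes once per element matches the paper's convention and is worth keeping explicit.

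\textbf{Your route.} You pull $|\A^*_k|_\infty$ out of the double sum before taking any norm. This leaves $|\bm e_k|^T|\Q_k|\,|\bm e_k|$, so you only need $\||\Q_k|\|_2\le\|\Q_k\|_F\le\sqrt{n_p}\|\Q_k\|_2$.

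\textbf{Comparison.} Your argument is more elementary, since it needs no Hadamard-product norm theorem. It is also sharper, giving $\sqrt{n_p}$ (or $n_p$ via your cruder route) instead of $n_p^{3/2}$. The reason is that you use the entrywise information on $\A^*_k$ directly, which is exactly what the Lipschitz lemma supplies. The paper instead passes through $\|\A^*_k\|_2$ and then pays a factor $n_p$ to return to the entrywise maximum. The paper's route would only be preferable if one had genuine spectral control of $\A^*_k$.

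Since $n_p$ is fixed, the difference in constants does not affect the $h$-scaling used in the Riccati analysis. Your sharper constant trivially implies the stated one, because $n_p\ge 1$.
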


\begin{proof}
    First, we focus on a single element contribution. We have
    \[
        |\bm{e}_{k}^T(\A^{*}_{k}\circ \Q_{k})\bm{e}_{k}|\le \|{\A}^{*}_{k}\circ \Q_{k}\|_{2}\|\bm{e}_{k}\|_{2}^2.
    \]
    Using the submultiplicativity of the 2-norm for Hadamard products, \eg, see \cite{johnson1990matrix,bhatia1996matrix}, we write
    \[
        |\bm{e}_{k}^T(\A^{*}_{k}\circ \Q_{k})\bm{e}_{k}|
        \le \|{\A}^{*}_{k}\|_{2}\|\Q_{k}\|_{2}\|\bm{e}_{k}\|_{2}^2.
    \]
    For a matrix \(\A\in\IR{n_p\times n_p}\), using the norm relations \(\|\A\|_{2} \le \sqrt{n_p}\|\A\|_{\infty}\) and \(\|\A\|_{\infty} \le n_{p} \max_{i,j}|\A_{ij}| \), we obtain
    \[
        \|\A_{k}^*\|_{2} \le n_{p}^{3/2}\max_{i,j}|[\A^*_{k}]_{i,j}| = n_{p}^{3/2}|\A^*_{k}|_{\infty}.
    \]
    Substituting, we obtain
    \[
        |\bm{e}_{k}^T(\A^{*}_{k}\circ \Q_{k})\bm{e}_{k}|
        \le n_{p}^{3/2}|\A^*_{k}|_{\infty} \|\Q_{k}\|_{2}\|\bm{e}_{k}\|_{2}^2.
    \]
    Summing over all elements, using the triangle inequality, and the definitions of \(\|\Q_{*}\|_{2}\) and \(|\A^{*}|_{\infty}\) along with \cref{eq:global_error_norms}, we arrive at
    \begin{align*}
        |\bm{e}^T\Q\A\bm{e}| \le \frac{1}{2}\sum_{\Omega_k\in\fn{T}_{h}}|\bm{e}_{k}^T(\A^{*}_{k}\circ \Q_{k})\bm{e}_{k}|
         & \le \sum_{\Omega_k\in\fn{T}_{h}}\frac{n_{p}^{3/2}}{2} |\A^*_{k}|_{\infty} \|\Q_{k}\|_{2}\|\bm{e}_{k}\|_{2}^2
        \\
         & \le\frac{n_{p}^{3/2}}{2} |\A^*|_{\infty} \|\Q_{*}\|_{2}\|\bm{e}\|_{2}^2 
         = c_{F}\|\bm e\|_{2}^{2},
    \end{align*}
    as desired. 
\end{proof}


\subsection{Bound for Term II}
The second term on the RHS of \cref{eq:energy_error_eq_nonlinear_split0} can be written as
\begin{align}
    |\bm{e}^T\Q R_1(\bm{e})| & = \Big|\bm{e}^T\Big(\sum_{\Omega_{k}\in \fn{T}_{h}}\P_{k}^T\Q_{k}\P_{k} \Big)R_1(\bm{e})\Big| =\Big|\sum_{\Omega_{k}\in \fn{T}_{h}}\bm{e}_{k}^T\Q_{k}R_1(\bm{e}_{k})\Big|
    \nonumber
    \\
                           & \le \sum_{\Omega_{k}\in \fn{T}_{h}}|\bm{e}_{k}^T\Q_{k}R_1(\bm{e}_{k})|
                             \le \sum_{\Omega_{k}\in \fn{T}_{h}}\|\bm{e}_{k}\|_{2}\|\Q_{k}R_1(\bm{e}_{k})\|_{2}
    \nonumber
    \\
                           & \le \Big(\sum_{\Omega_{k}\in \fn{T}_{h}}\|\bm{e}_{k}\|_{2}^2\Big)^{1/2} \Big(\sum_{\Omega_{k}\in \fn{T}_{h}}\|\Q_{k}R_1(\bm{e}_{k})\|_{2}^2\Big)^{1/2}
    \tag{By \cref{lem:sum_of_H_norm_products}}
    \nonumber
    \\
                           & = \|\bm{e}\|_{2} \Big(\sum_{\Omega_{k}\in \fn{T}_{h}}\|\Q_{k}R_1(\bm{e}_{k})\|_{2}^2\Big)^{1/2},\label{eq:eQR0}
\end{align}
where we have used the global error definition in \cref{eq:global_error_norms} in the last line. We can simplify \cref{eq:eQR0} as
\begin{align}
    \|\bm{e}\|_{2} \Big(\sum_{\Omega_{k}\in \fn{T}_{h}}\|\Q_{k}R_1(\bm{e}_{k})\|_{2}^2\Big)^{1/2} \le
    \|\bm{e}\|_{2} \Big(\sum_{\Omega_{k}\in \fn{T}_{h}}\|\Q_{k}\|_{2}^2\|R_1(\bm{e}_{k})\|_{2}^2\Big)^{1/2}. \label{eq:eQR1}
\end{align}
Using the definition of \(\|\Q_{*}\|_{2}\) in \cref{eq:def_Q*}, we write \cref{eq:eQR1} as
\begin{align}
    |\bm{e}^T\Q R_1(\bm{e})| & \le \|\bm{e}\|_{2} \Big(\|\Q_{*}\|_{2}^2\sum_{\Omega_{k}\in \fn{T}_{h}}\|R_1(\bm{e}_{k})\|_{2}^2\Big)^{1/2}
    \le \|\bm{e}\|_{2} \|\Q_{*}\|_{2}\Big(\sum_{\Omega_{k}\in \fn{T}_{h}}\|R_1(\bm{e}_{k})\|_{2}^2\Big)^{1/2}
    \nonumber
    \\
                           & \le \frac{c_{r}}{2}\|\Q_{*}\|_{2}\|\bm{e}\|_{2}^3,
    \label{eq:eQR2}
\end{align}
where we have used the bound \(\|R_1(\bm{e})\|_{2} \le \frac{c_{r}}{2}\|\bm{e}\|_{2}^2\) in \cref{eq:remainder_2_norm_bound1} in the third inequality. The coefficient \(c_r\) is defined by \(c_r \coloneqq \max_{i} |f''(u_{\xi,i})|\) for each global node \(i\), where \(u_{\xi,i}\in [u_i,u_{h,i}]\) is the intermediate value from Taylor's theorem and \(f''={\partial}^2 f/\partial u^2\).
Introducing the definition,
\begin{equation}\label{eq:c_R}
    c_{R}\coloneqq \frac{c_{r}}{2}\|\Q_*\|_{2},
\end{equation}
and using \cref{eq:eQR2}, we write the bound for Term II as
\begin{equation}\label{eq:term2_bound}
    |\bm{e}^T\Q R_1(\bm{e})| \le c_{R}\|\bm{e}\|_{2}^3.
\end{equation}

\subsection{Bound for Term III}
To derive a bound on the third term in \cref{eq:energy_error_eq_nonlinear_split0}, we first expand \(\A(\bm u_h)\) (the diagonal Jacobian evaluated at \(\bm u_h=\bm u-\bm e\)) by a pointwise Taylor expansion of \(f'\) about \(u_i\); for each node \(i\),
\[
    f'(u_{h,i}) = f'(u_i - e_i)
    = f'(u_i) - f''(u_{\xi,i})\,e_i
\]
for some \(u_{\xi,i}\in [u_{i},u_{h,i}]\). In matrix form, we have
\begin{equation}\label{eq:F_expand}
    \A(\bm{u}_h) = \A(\bm{u}) - \R_{0}(\bm{e}) ,
\end{equation}
where the linear remainder term, \(\R_{0}(\bm{e})\), is a diagonal matrix with entries \(|[\R_{0}(\bm e)]_{ii}|\le {c}_{r} |e_i|\) with \({c}_{r}=\max_{i}|f''(u_{\xi,i})|\) for each global node \(i\). Using the error definition, \(\bm{e}=\bm{u} - \bm{u}_{h}\), and \cref{eq:F_expand}, we can write Term III as 
\begin{align}
    \bm{e}^T\A(\bm{u})\Q\bm{u} - \bm{e}^T\A(\bm{u}_{h})\Q\bm{u}_{h}
    & = \bm{e}^T\A(\bm{u})\Q\bm{u}
    - \bm{e}^T(\A(\bm{u}) - \R_{0}(\bm{e}))\Q(\bm{u} - \bm{e})
    \nonumber
    \\
    & =\bm{e}^T\A(\bm{u})\Q\bm{e}
    + \bm{e}^T\R_{0}(\bm{e})\Q\bm{u}
    - \bm{e}^T\R_{0}(\bm{e})\Q\bm{e}
    \nonumber
    \\
    & =\bm{e}^T\A(\bm{u})\Q\bm{e}
    + \bm{e}^T\R_{0}(\bm{e})\H\D\bm{u}
    - \bm{e}^T\R_{0}(\bm{e})\Q\bm{e}
    \nonumber
    \\
    & =\bm{e}^T\A(\bm{u})\Q\bm{e}
    + \bm{e}^T\R_{0}(\bm{e})\H\bm{u}_{x}
    + \bm{e}^T\R_{0}(\bm{e})\H\bm{\tau}_{u}
    - \bm{e}^T\R_{0}(\bm{e})\Q\bm{e} ,\label{eq:phi1}
\end{align}
where we have used the accuracy of the derivative operators to write
\[
    [\D\bm{u}]_{i} =\pder[\fn{U}]{x}(x_{i}) + \fn{O}(h^{p})
    = [\bm{u}_{x}]_{i} + [\bm{\tau}_{u}]_{i}.
\]

We now analyze each term in \cref{eq:phi1}. The first term leads to a similar bound as in \cref{eq:term1_bound},
\begin{equation}\label{eq:trace_bound2}
    |\bm{e}^T\A(\bm{u})\Q\bm{e}|
    \le \sum_{\Omega_k\in\fn{T}_{h}}\frac{n_{p}^{3/2}}{2}|\A^{*}|_{\infty}\,\|\Q_{k}\|_{2}\,\|\bm e_{k}\|_{2}^{2}
    \le \,c_{F}\|\bm e\|_{2}^{2}.
\end{equation}
The second and third terms are of the same type; hence, we use \(\bm{v}\) instead of \(\bm{u}_{x}\) and \(\bm{\tau}_{u}\) for their analysis and proceed as
\begin{align}
    |\bm{e}^T\R_{0}(\bm{e})\H\bm{v}|
    & = \Big|\sum_{\Omega_{k}\in\fn{T}_{h}}\bm{e}_{k}^T\R_{0}(\bm{e}_{k})\H_{k}\bm{v}_{k}\Big|
    \nonumber
    \\
    & \le \sum_{\Omega_{k}\in\fn{T}_{h}}|\bm{e}_{k}^T\R_{0}(\bm{e}_{k})\H_{k}\bm{v}_{k}|
    \le \sum_{\Omega_{k}\in\fn{T}_{h}}\|\bm{e}_{k}^T\R_{0}(\bm{e}_{k})\|_{\H}\|\bm{v}_{k}\|_{\H}
    \nonumber
    \\
    & \le  \sqrt{w_{\max}}\sum_{\Omega_{k}\in\fn{T}_{h}}\|\bm{e}_{k}^T\R_{0}(\bm{e}_{k})\|_{\H}\|\bm{v}_{k}\|_{2}
    \tag{\(\|\bm{v}\|_{\H} \le \sqrt{w_{\max}}\|\bm{v}\|_{2}\)}
    \nonumber
    \\
    & \le  \sqrt{w_{\max}}\|\bm{v}_{*}\|_{2}\sum_{\Omega_{k}\in\fn{T}_{h}}\|\bm{e}_{k}^T\R_{0}(\bm{e}_{k})\|_{\H}
    \tag{\(\|\bm{v}_{*}\|_{2}\coloneqq \max_{\Omega_{k}} \|\bm{v}_{k}\|_{2}\)}
    \nonumber
    \\
    & \le  \sqrt{w_{\max}}\sqrt{n_p}\|\bm{v}_{*}\|_{\infty}\sum_{\Omega_{k}\in\fn{T}_{h}}\|\bm{e}_{k}^T\R_{0}(\bm{e}_{k})\|_{\H}
    \tag{\(\|\bm{v}_{*}\|_{2}\le \sqrt{n_p}\|\bm{v}_{*}\|_{\infty}\)}
    \nonumber
    \\
    |\bm{e}^T\R_{0}(\bm{e})\H\bm{v}|
    & \le  \sqrt{w_{\max}}\sqrt{n_p}\|\bm{v}_{*}\|_{\infty}\sum_{\Omega_{k}\in\fn{T}_{h}}\|\bm{e}_{k}^T\R_{0}(\bm{e}_{k})\|_{\H}. \label{eq:term23}
\end{align}
Then we note that
\begin{align}
    \|\bm{e}_{k}^T\R_{0}(\bm{e}_{k})\|_{\H}^2
    & \le {c}_{r}^2\sum_{i=1}^{n_{p}} w_{k,i}e_{k,i}^2 e_{k,i}^2
    \le {c}_{r}^2\sum_{i=1}^{n_{p}} (\max_{1\le j\le n_p}e_{k,j}^2) (w_{k,i}e_{k,i}^2)
    = {c}_{r}^2\|\bm{e}_{k}\|_{\infty}^2 \|\bm{e}_{k}\|_{\H}^2
    \nonumber
    \\
    \|\bm{e}^T_{k}\R_{0}(\bm{e}_{k})\|_{\H}
    & \le  {c}_{r}\|\bm{e}_{k}\|_{\infty} \|\bm{e}_{k}\|_{\H}. \label{eq:inf_H_relation}
\end{align}
Furthermore, we have
\begin{align}
    \|\bm{e}_{k}\|_{\H}^2
    = \sum_{i=1}^{n_{p}}w_{k,i}e_{k,i}^2
    \ge w_{\min}\sum_{i=1}^{n_{p}}e_{k,i}^2
    \ge w_{\min} \max_{i}\bm{e}_{k,i}^2
    = w_{\min} \|\bm{e}_{k}\|_{\infty}^2;\nonumber
\end{align}
thus, we obtain
\begin{equation}
    \|\bm{e}_{k}\|_{\infty} \le \frac{1}{\sqrt{w_{\min}}}\|\bm{e}_{k}\|_{\H}.
\end{equation}
Therefore, substituting into \cref{eq:inf_H_relation} and summing over all elements yields
\begin{equation}\label{eq:eMH}
    \sum_{\Omega_{k}\in\fn{T}_{h}} \|\bm{e}_{k}^T\R_{0}(\bm{e}_{k})\|_{\H} 
    \le {c}_{r}\frac{1}{\sqrt{w_{\min}}}\sum_{\Omega_{k}\in\fn{T}_{h}}\|\bm{e}_{k}\|_{\H}^2
    = {c}_{r}\frac{1}{\sqrt{w_{\min}}}\|\bm{e}\|_{\H}^2.
\end{equation}
Substituting \cref{eq:eMH} into \cref{eq:term23}, we obtain,
\begin{align}\label{eq:term23_1}
    |\bm{e}^T\R_{0}(\bm{e})\H\bm{v}|
    &\le  {c}_{r}\frac{\sqrt{w_{\max}}}{\sqrt{w_{\min}}}\sqrt{n_p}\|\bm{v}_{*}\|_{\infty} \|\bm{e}\|_{\H}^2
    \le
     {c}_{r}\,w_{\max}\frac{\sqrt{w_{\max}}}{\sqrt{w_{\min}}}\sqrt{n_p}\|\bm{v}_{*}\|_{\infty}  \|\bm{e}\|_{2}^2.
\end{align}
Therefore, we can bound the second and third terms in \cref{eq:phi1} as
\begin{align}
    |\bm{e}^T\R_{0}(\bm{e})\H\bm{u}_{x}|
    &\le  {c}_{r}\,w_{\max}\frac{\sqrt{w_{\max}}}{\sqrt{w_{\min}}}\sqrt{n_p}\|\bm{u}_{x,*}\|_{\infty} \|\bm{e}\|_{2}^2, \label{eq:eMHux}
    \\
    |\bm{e}^T\R_{0}(\bm{e})\H\bm{\tau}_{u}|
    &\le  {c}_{r}\,w_{\max}\frac{\sqrt{w_{\max}}}{\sqrt{w_{\min}}}\sqrt{n_p}\|\bm{\tau}_{u,*}\|_{\infty}\|\bm{e}\|_{2}^2, \label{eq:eMHtau_u}
\end{align}
which are of the same form as the bounds given in \cref{eq:trace_bound2} and \cref{eq:term1_bound}. 

Finally, the fourth term on the RHS of \cref{eq:phi1} can be analyzed as follows,
\begin{align}
    |\bm{e}^T\R_{0}(\bm{e})\Q\bm{e}|
    & = \Big|\sum_{\Omega_{k}\in \fn{T}_{h}}\bm{e}_{k}^T\R_{0}(\bm{e}_{k})\Q_{k}\bm{e}_{k}\Big|
    \nonumber
    \\
    & \le \sum_{\Omega_{k}\in \fn{T}_{h}}|\bm{e}_{k}^T\R_{0}(\bm{e}_{k})\Q_{k}\bm{e}_{k}|
    \le \sum_{\Omega_{k}\in \fn{T}_{h}}\|\bm{e}_{k}^T\R_{0}(\bm{e}_{k})\|_{2}\|\Q_{k}\bm{e}_{k}\|_{2}
    \nonumber
    \\
    & \le  {c}_{r}\sum_{\Omega_{k}\in \fn{T}_{h}}\Big(\sum_{i=1}^{n_{p}}e_{k,i}^4\Big)^{1/2}\|\Q_{k}\|_{2}\|\bm{e}_{k}\|_{2}
    \nonumber
    \\
    & \le  {c}_{r}\sum_{\Omega_{k}\in \fn{T}_{h}}\Big(\Big(\sum_{i=1}^{n_{p}}e_{k,i}^2\Big)^2\Big)^{1/2}\|\Q_{k}\|_{2}\|\bm{e}_{k}\|_{2}
    \tag{By \cref{lem:sum_of_product}}
    \nonumber
    \\
    & =  {c}_{r}\sum_{\Omega_{k}\in \fn{T}_{h}}\|\bm{e}_{k}\|_{2}^2\|\Q_{k}\|_{2}\|\bm{e}_{k}\|_{2}
    \nonumber
    \\
    & \le  {c}_{r}\|\Q_{*}\|_{2}\sum_{\Omega_{k}\in \fn{T}_{h}}\|\bm{e}_{k}\|_{2}^2 \|\bm{e}_{k}\|_{2}
    \tag{\(\|\Q_{*}\|_{2} =\max_{\Omega_{k}\in\fn{T}_{h}} \|\Q_{k}\|_{2}\)}
    \nonumber
    \\
    & \le {c}_{r}\|\Q_{*}\|_{2}\Big(\sum_{\Omega_{k}\in \fn{T}_{h}}\|\bm{e}_{k}\|_{2}^4\Big)^{1/2} \Big(\sum_{\Omega_{k}\in \fn{T}_{h}}\|\bm{e}_{k}\|_{2}^2\Big)^{1/2}
    \tag{By \cref{lem:sum_of_H_norm_products}}
    \nonumber
    \\
    & \le  {c}_{r}\|\Q_{*}\|_{2}\|\bm{e}\|_{2}^3.\label{eq:eMQe}
\end{align}
Hence, the fourth term in \cref{eq:phi1} adds a similar term as \cref{eq:term2_bound}.

Collecting the bounds in \cref{eq:trace_bound2},  \cref{eq:eMHux}, \cref{eq:eMHtau_u}, \cref{eq:eMQe} and the definition in \cref{eq:c_R}, \(c_{R}=c_r\|\Q_{*}\|_2/2\), we can write the bound for Term III in \cref{eq:energy_error_eq_nonlinear_split0} as 
\begin{align}
    \left|\bm{e}^T\A(\bm{u})\Q\bm{u} - \bm{e}^T\A(\bm{u}_{h})\Q\bm{u}_{h}\right| 
    &\le c_{S}\|\bm e\|_{2}^{2} + 2{c}_{R}\|\bm{e}\|_{2}^3,
    \label{eq:term3_bound}
\end{align}
where \(c_{S}=c_{F}
        + {c}_{r}\,(w_{\max}^{3/2}/w_{\min}^{1/2}) \sqrt{n_p}
        \left(\|\bm{u}_{x,*}\|_{\infty} + \|\bm{\tau}_{u,*}\|_{\infty}\right)\). 

\subsection{Bound for Term IV}
The last term in \cref{eq:energy_error_eq_nonlinear_split0} can be bounded using the triangle and Cauchy-Schwarz inequalities as
\begin{align}
    |\bm{e}^T\H\bm{\tau}| & = \Big|\bm{e}^T\Big(\sum_{\Omega_{k}\in \fn{T}_{h}}\P_{k}^T\H_{k}\P_{k}\Big)\bm{\tau}\Big| = \Big| \sum_{\Omega_{k}\in \fn{T}_{h}}\bm{e}_{k}^T\H_{k}\bm{\tau}_{k}\Big| \nonumber 
    \\
                          & \le \sum_{\Omega_{k}\in \fn{T}_{h}}\Big|\bm{e}_{k}^T\H_{k}\bm{\tau}_{k}\Big|
                         \le \sum_{\Omega_{k}\in \fn{T}_{h}} \|\bm{\tau}_{k}\|_{\H}\|\bm{e}_{k}\|_{\H}.
                         \nonumber
\end{align}
Now, applying \cref{lem:sum_of_H_norm_products} and using the definition of the global errors in \cref{eq:global_error_norms}, we obtain
\begin{equation}\label{eq:term4_bound}
    |\bm{e}^T\H\bm{\tau}| \le \|\bm{\tau}\|_{\H}\|\bm{e}\|_{\H}.
\end{equation}

\subsection{Riccati ODE formulation}
The purpose of this section is to write the error equation in \cref{eq:energy_error_eq_nonlinear_split0} as a constant-coefficient Riccati ODE and determine how its coefficients scale with mesh refinement. To this end, we bound the RHS of \cref{eq:energy_error_eq_nonlinear_split0} using the bounds we have obtained for Term I-IV in \cref{eq:term1_bound,eq:term2_bound,eq:term3_bound,eq:term4_bound}, \ie, we write
\begin{align}
    \frac{1}{2}\der[]{t}\|\bm{e}\|_{\H}^2 &\le
    (\alpha_1 + 2\alpha_2)c_{R} \|\bm{e}\|_{2}^3 + (\alpha_1 c_{F} + \alpha_2 c_{S})\|\bm{e}\|_{2}^2 + \|\bm{\tau}\|_{\H}\|\bm{e}\|_{\H}.
    \label{eq:error_equation_2norm}
\end{align}
We note that 
\begin{equation}
    \|\bm{\tau}\|_{\H}^2 = \bm{\tau}^T\H\bm{\tau}=\sum_{\Omega_{k}\in \fn{T}_{h}}\sum_{i=1}^{n_p}\tau_{k,i}^2 w_{k,i} 
    \le \|\bm{\tau}_*\|_{\infty}^2 \sum_{\Omega_{k}\in \fn{T}_{h}}\sum_{i=1}^{n_p}w_{k,i}
    = \|\bm{\tau}_*\|_{\infty}^2 |\Omega|,
\end{equation}
where \(\|\bm{\tau}_*\|_{\infty}\coloneqq \max_{\Omega_{k}\in \fn{T}_{h}} \|\bm{\tau}_{k}\|_{\infty}\) and \(|\Omega|\) is the volume of the domain. Hence, it follows that 
\begin{equation}\label{eq:tau_H_scaling}
    \|\bm{\tau}\|_{\H} \le \sqrt{|\Omega|} \|\bm{\tau}_*\|_{\infty}.
\end{equation}
Using the norm relations \(\|\bm{e}\|_{2}^2 \le w_{\min}^{-1}\|\bm{e}\|_{\H}^2\) and \cref{eq:tau_H_scaling}, we can write \cref{eq:error_equation_2norm} as
\begin{align}
    \frac{1}{2}\der[]{t}\|\bm{e}\|_{\H}^2 &\le
    (\alpha_1 + 2\alpha_2)c_{R}  w_{\min}^{-3/2}\|\bm{e}\|_{\H}^3 + (\alpha_1 c_{F} + \alpha_2 c_{S})w_{\min}^{-1}\|\bm{e}\|_{\H}^2 + \|\bm{\tau}\|_{\H}\|\bm{e}\|_{\H},
    \nonumber 
    \\
    \der[]{t}\|\bm{e}\|_{\H} &\le
    (\alpha_1 + 2\alpha_2)c_{R} w_{\min}^{-3/2} \|\bm{e}\|_{\H}^2 + (\alpha_1 c_{F} + \alpha_2 c_{S})w_{\min}^{-1}\|\bm{e}\|_{\H} + \sqrt{|\Omega|} \|\bm{\tau}_*\|_{\infty}.
    \label{eq:error_equation_Hnorm}
\end{align}
For \(\|\bm e(t)\|_{\H}>0\), \cref{eq:error_equation_Hnorm} follows by dividing both sides of the preceding inequality by \(\|\bm e(t)\|_{\H}\). When \(\|\bm e(t)\|_{\H}=0\), we have \(\bm e(t)=\bm 0\) since \(\H\) is positive definite. This implies \(\bm{u}(t)=\bm{u}_{h}(t)\) and thus \cref{eq:error_split_form} reduces to
\begin{equation}\label{eq:error_e0}
    \der[\bm{e}(t)]{t} = \bm{\tau}(t).
\end{equation}
The derivative \(\dd{\|\bm{e}\|_{\H}}/\dd{t}\) may not exist when \(\|\bm e(t)\|_{\H}=0\); hence, in such a case, we introduce the upper right Dini derivative as
\begin{equation}\label{eq:dini_def}
    D^+\|\bm{e}(t)\|_{\H}\coloneqq \limsup_{\delta t\to 0^+}\frac{\|\bm{e}(t+\delta t)\|_{\H}-\|\bm{e}(t)\|_{\H}}{\delta t} = \limsup_{\delta t\to 0^+}\frac{\|\bm{e}(t+\delta t)\|_{\H}}{\delta t}.
\end{equation}
The fundamental theorem of calculus and \(\bm e(t)=\bm 0\) give
\begin{equation}\label{eq:e_taylor}
    \bm e(t+\delta t)-\bm e(t)=\bm e(t+\delta t)=\int_t^{t+\delta t}\der[\bm e(s)]{s}\,\dd s.
\end{equation}
Hence, using \cref{eq:dini_def}, \cref{eq:error_e0}, continuity of \(\dd\bm e/\dd t\), and the triangle inequality, we obtain
\begin{align*}
    D^+ \|\bm e(t)\|_{\H}
    &=
    \limsup_{\delta t\to 0^+}\frac{1}{\delta t}\left\|\int_t^{t+\delta t}\der[\bm e]{s}\,\dd s\right\|_{\H}
    \le
    \limsup_{\delta t\to 0^+}\frac{1}{\delta t}\int_t^{t+\delta t}\left\|\der[\bm e]{s}\right\|_{\H}\,\dd s
    \\
    &=
    \left\|\der[\bm e]{t}\right\|_{\H}
    =\|\bm\tau(t)\|_{\H}
    \le \sqrt{|\Omega|}\,\|\bm\tau_*(t)\|_{\infty}.
\end{align*}
Since \(\|\bm{e}(t)\|_{\H}=0\), this is precisely \cref{eq:error_equation_Hnorm} in the upper right Dini derivative sense. Next, recalling that the entries of the \(\H\) matrix are bounded as stated in \cref{eq:bound_on_H_ii}; in particular,
\(
    c h^{d} \le w_{\min} \le C h^{d},
\)
for some constant \(c, C>0\), we determine how the coefficients in \cref{eq:error_equation_Hnorm} scale with the mesh size.
\begin{proposition}\label{prop:constant_coefficient_error_equation}
    The coefficients of \(\|\bm e\|_{\H}^{2}\) and \(\|\bm e\|_{\H}\) in the \(\H\)-norm error-evolution equation, \cref{eq:error_equation_Hnorm}, change with the mesh size as \(h^{-1-d/2}\) and \(h^{0}\) respectively, while \(\|\bm{\tau}_{*}\|_{\infty}\) itself vanishes at a rate of \(h^{p}\).
\end{proposition}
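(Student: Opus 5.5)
The plan is to track how each constant in \cref{eq:error_equation_Hnorm} depends on \(h\), using \cref{assump:mesh} together with the standard scaling of SBP operators on quasi-uniform meshes. The key scaling fact is for \(\Q_k\). On a reference element the operator \(\Q_{\text{ref}}\) is fixed and has norm \(\fn{O}(1)\). Under an affine (or uniformly smooth) map of an element of size \(h\), \(\H_k\) scales like \(h^d\) and \(\D_k=\H_k^{-1}\Q_k\) scales like \(h^{-1}\). Hence \(\Q_k\) scales like \(h^{d-1}\), so \(\|\Q_*\|_2\le C_Q h^{d-1}\) with \(C_Q\) independent of \(h\). Quasi-uniformity bounds the metric terms uniformly over the family, which is what makes \(C_Q\) independent of \(h\). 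I would state this as a short lemma or remark, since it is the only ingredient not already quantified in the paper.

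\textbf{The quadratic coefficient.} This coefficient is \((\alpha_1+2\alpha_2)c_R w_{\min}^{-3/2}\), where \(c_R=\tfrac{c_r}{2}\|\Q_*\|_2\). The constant \(c_r\) is bounded by \(\sup|f''|\), which is finite by the assumed global boundedness of the second derivative of the flux. Combining \(\|\Q_*\|_2=\fn{O}(h^{d-1})\) with \(w_{\min}^{-3/2}\le c^{-3/2}h^{-3d/2}\) from \cref{eq:bound_on_H_ii} gives
\[
h^{d-1}h^{-3d/2}=h^{-1-d/2}.
\]

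\textbf{The linear coefficient.} This coefficient is \((\alpha_1c_F+\alpha_2c_S)w_{\min}^{-1}\), and I would treat its pieces in turn.
\begin{enumerate}[label=(\roman*)]
\item For \(c_F=\tfrac{n_p^{3/2}}{2}|\A^*|_\infty\|\Q_*\|_2\), the key observation is that \(|\A^*|_\infty=\max_k\max_{i,j}|f'(u_{k,j})-f'(u_{k,i})|\). By the mean value theorem and the smoothness of \(\fn{U}\), this is at most \(\sup|f''|\,\|\fn{U}_x\|_\infty\operatorname{diam}(\Omega_k)=\fn{O}(h)\). Hence \(c_F=\fn{O}(h\cdot h^{d-1})=\fn{O}(h^d)\).
\item For the second part of \(c_S\), the factor \(w_{\max}^{3/2}/w_{\min}^{1/2}\) is \(\fn{O}(h^d)\). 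The factor \(\|\bm u_{x,*}\|_\infty+\|\bm\tau_{u,*}\|_\infty\) is \(\fn{O}(1)\) by smoothness of the exact solution and consistency of \(\D\).
\item Multiplying the \(\fn{O}(h^d)\) total by \(w_{\min}^{-1}=\fn{O}(h^{-d})\) yields \(h^0\).
\end{enumerate}

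\textbf{The forcing term.} Here \(\|\bm\tau_*\|_\infty=\fn{O}(h^p)\) follows from \cref{eq:tau_csbp_split_form}. I would justify this by degree-\(p\) exactness of \(\D_k\) applied to the smooth functions \(\fn{U}\) and \(f(\fn{U})\), via a Taylor/Bramble--Hilbert argument on each element: the local interpolation error is \(\fn{O}(h^{p+1})\) and \(\D_k\) costs \(h^{-1}\). The constant \(\sqrt{|\Omega|}\) is independent of \(h\).

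\textbf{Main obstacle.} I expect the hardest step to be making \(|\A^*|_\infty=\fn{O}(h)\) and \(\|\Q_*\|_2=\fn{O}(h^{d-1})\) rigorous. The first is what turns a naive \(h^{-1}\) linear coefficient into \(h^0\). It uses that \(\A^*_k\) involves only differences of \(f'\) at the exact solution within one element, not at \(\bm u_h\). The second requires the reference-element mapping argument and the uniform bounds on the mapping under quasi-uniformity, which I would assume in the standard form.
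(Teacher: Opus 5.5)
Your proposal is correct and follows the paper's proof essentially step for step. The paper gets $\|\Q_*\|_2=\fn{O}(h^{d-1})$ from an appendix lemma on SBP scaling under the reference-to-physical mapping; note that this lemma rests on a separate shape-regularity assumption on the element Jacobians, not only on the weight bounds of quasi-uniformity. It then gets $|\A^*|_\infty\le Ch$ from the mean value theorem applied to $f'\circ\fn{U}$ at the exact solution, uses $(w_{\max}/w_{\min})^{3/2}=\fn{O}(1)$ for the remaining part of $c_S$, and reads off $\|\bm\tau_*\|_\infty=\fn{O}(h^p)$ directly from consistency. Your Bramble--Hilbert sketch for the truncation-error rate is the one addition; the paper simply takes that rate as its consistency hypothesis.
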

\begin{proof}
    We start with the coefficient of \(\|\bm{e}\|_{2}^{2}\). The expression for \(c_{R}\) is given in \cref{eq:c_R},
    \[
        c_{R}=\frac{c_r}{2}\|\Q_*\|_{2},
    \]
    where we choose \(c_r\) to be the largest constants over the entire space and time duration, \ie, \(c_r=\sup_{s\in\mathbb{R}}|f''(s)|\).
    Now, using the fact that \(\Q_{k} = \fn{O}(h^{d-1})\) (see \cref{lem:sbp_scaling} in \cref{app:sbp_operators}), we can write the coefficient of \(\|\bm{e}\|_{2}^2\) in \cref{eq:error_equation_Hnorm} as
    \begin{align}
        (\alpha_1+ 2\alpha_2) c_R w_{\min}^{-3/2} & = (\alpha_1+ 2\alpha_2)\frac{c_r}{2} \|\Q_*\|_{2}w_{\min}^{-3/2} \nonumber
        = \fn{O}(h^{d-1})\; \fn{O}(h^{-3d/2}) 
        \nonumber
        \\
        \nonumber
                                         & = \fn{O}(h^{-1-d/2}). 
        \label{eq:cR_scaling}
    \end{align}
    
    For the coefficient of \(\|\bm{e}\|_{2}\), we have \(c_{F}\) given in \cref{eq:c_F} as
    \[
        c_{F} =\frac{n_{p}^{3/2}}{2} |\A^{*}|_{\infty}\,\|\Q_*\|_{2}.
    \]
    As a consequence of the smoothness assumptions on the solution and flux functions, in particular taking \(\fn{U}\in C^1\) and \(\fn{F}\in C^2\), and using the compactness of the domain, the flux difference satisfies the Lipschitz continuity condition shown in \cref{app:lipschitz_flux}, \cref{lem:flux_lipschitz},
    \begin{equation}\label{eq:F_diff_mesh}
        |\A^*|_{\infty} \le C h.
    \end{equation}
    Substituting \cref{eq:F_diff_mesh} in the coefficient of \(\|\bm{e}\|_{2}\) in \cref{eq:error_equation_Hnorm}, we find
    \begin{align}
        \frac{\alpha_1}{w_{\min}}c_{F} & = \frac{n_{p}^{3/2}}{2} \frac{\alpha_1}{w_{\min}}|\A^{*}|_{\infty}\,\|\Q_*\|_{2}
        \nonumber
        \\
                                                    & \le \frac{n_{p}^{3/2}}{2}  C \,\frac{\alpha_1}{h^d} h\,\|\Q_*\|_{2}
         =\fn{O}(h^{1-d}) \, \fn{O}(h^{d-1})
        \nonumber
        \\
                                                    & =\fn{O}(h^0).
        \label{eq:cF_scaling}
    \end{align}
    Similarly, using the definition of \(c_S\) in \cref{eq:term3_bound}, we can write
    \begin{equation*}
        \frac{\alpha_2}{w_{\min}}c_S = \frac{\alpha_2}{w_{\min}}c_{F}
        + \alpha_2 c_r\,\left(\frac{w_{\max}}{w_{\min}}\right)^{3/2}
        \sqrt{n_p}
        \left(\|\bm{u}_{x,*}\|_{\infty} + \|\bm{\tau}_{u,*}\|_{\infty}\right).
    \end{equation*}
    Since \(\alpha_1\) and \(\alpha_2\) are constants, the first term on the RHS has a scaling of \(h^{0}\) as shown in \cref{eq:cF_scaling}. We also note that \(c_r\), \(\|\bm{u}_{x,*}\|_{\infty}\), and \(w_{\max}/w_{\min}\) are all mesh independent. Moreover, we have \(\|\bm{\tau}_{u,*}\|_{\infty} =\fn{O}(h^p)\). Hence, it follows that 
    \begin{equation}\label{eq:cS_scaling}
        \frac{\alpha_2}{w_{\min}}c_S =\fn{O}(h^{0}).
    \end{equation}
    
    Finally, since \(\tau_{k,i}=\fn{O}(h^{p})\) for \(i\in\{1,\dots,n_p\}\) and \(\Omega_{k}\in \fn{T}_{h}\), it follows that \(\|\bm{\tau}_{*}\|_{\infty} \) vanishes with the mesh size at a rate of \(h^p\), as stated in a vector form in \cref{eq:tau_csbp_split_form}. 
\end{proof}

\subsection{Convergence analysis}

In order to use the error-evolution inequality, \cref{eq:error_equation_Hnorm}, for convergence analysis, we first find an associated upper envelope ODE that we can solve explicitly.  
\begin{proposition}\label{prop:riccati_envelope}
    Since the coefficient functions \(c_R(t)\), \(c_F(t)\), \(c_S(t)\), and the truncation error, \(\|\bm\tau_*(t)\|_{\infty}\), are continuous on \([0,T]\), we define for a fixed mesh size \(h\) satisfying \(0 < h \le h_{1}\)
    \begin{align}
        a &\coloneqq a(h) = \sup_{0\le t\le T} (\alpha_1 + 2\alpha_2)c_{R}(t,h) w_{\min}^{-3/2}(h),
        \\
        b& \coloneqq\sup_{0 < \tilde{h} \le h_{1}}\sup_{0\le t\le T} w_{\min}^{-1}(\tilde{h})(\alpha_1 c_{F}(t,\tilde{h}) + \alpha_2 c_{S}(t,\tilde{h})),
        \\
        c &\coloneqq c(h) =  \sup_{0\le t\le T} \sqrt{|\Omega|} \|\bm\tau_*(t,h)\|_{\infty},
    \end{align}
    which are finite, provided \(h_{1}\) is sufficiently small. Using the fact that the numerical solution is bounded in the sense of \cref{eq:solution_bound}, there exists \(M<\infty\) such that
    \[
        z(t)\coloneqq\|\bm e(t)\|_{\H} \le M \qquad \forall t\in[0,T].
    \]
    Then, due to \cref{eq:error_equation_Hnorm} and because \(w_{\min}^{-1}(h)(\alpha_1 c_{F}(t,h) + \alpha_2 c_{S}(t,h))\le b\), \(z(t)\) satisfies the differential inequality
    \begin{equation}\label{eq:z_inequality}
        \frac{\mathrm{d}z}{\mathrm{d}t} \le a\,z^2 + b\,z + c, \qquad \forall t\in[0,T].
    \end{equation}
    Let \(y(t)\) be the maximal solution of the constant-coefficient Riccati initial-value problem
    \begin{equation}\label{eq:riccati_energy_error}
        \der[y]{t}= a\,y^2 + b\,y + c,\qquad y(0)=z(0)=0.
    \end{equation}
    Then, \(z(t)\le y(t)\) for all \(t\) in the common interval of existence. In particular, analyzing the Riccati ODE for \(y(t)\) provides an upper envelope for the error norm.
\end{proposition}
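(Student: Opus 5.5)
The argument has three parts: show that the constants $a$, $b$, $c$ are finite, derive the differential inequality \cref{eq:z_inequality} from \cref{eq:error_equation_Hnorm}, and then apply a comparison principle to the Riccati problem \cref{eq:riccati_energy_error}.

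\emph{Finiteness of the constants.} For fixed $h$, $c_R(t,h)$ is bounded by a constant independent of $t$, since $c_r$ was chosen as $\sup_{s}|f''(s)|$, which is finite by the globally bounded second derivative assumption. Together with the fact that $w_{\min}(h)>0$, this makes $a(h)$ finite. The coefficients $c_F$ and $c_S$ involve $|\A^*|_\infty$, $\|\bm u_{x,*}\|_\infty$ and $\|\bm\tau_{u,*}\|_\infty$. Each of these is continuous in $t$ on the compact interval $[0,T]$, because $\fn{U}$ and $\fn{F}$ are smooth, so their suprema in $t$ are finite. For the uniform supremum over $0<\tilde h\le h_1$ defining $b$, I will invoke the proof of \cref{prop:constant_coefficient_error_equation}. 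It gives $w_{\min}^{-1}(\alpha_1c_F+\alpha_2c_S)\le K$ with $K$ independent of $\tilde h$. This uses the Lipschitz bound \cref{eq:F_diff_mesh} and \cref{lem:sbp_scaling}, whose constants depend only on $\sup_{[0,T]}\|\fn U_x\|_\infty$ and $\sup|f''|$, together with the quasi-uniformity \cref{assump:mesh} and $\|\bm\tau_{u,*}\|_\infty\le C'\tilde h^p\le C' h_1^p$. The only place smallness of $h_1$ is needed is to ensure that the $\fn{O}(\cdot)$ statements hold, i.e.\ we are in the asymptotic regime. Finiteness of $c$ follows in the same way from $\|\bm\tau_*\|_\infty=\fn{O}(h^p)$ uniformly in $t$. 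The bound $z\le M$ follows from the triangle inequality $\|\bm e\|_\H\le\|\bm u\|_\H+\|\bm u_h\|_\H$, combined with \cref{eq:solution_bound} and the boundedness of the exact solution.

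\emph{The differential inequality.} Since $a$, $b$, $c$ dominate the time-dependent coefficients pointwise and $z\ge0$, \cref{eq:error_equation_Hnorm} immediately yields $D^+z\le az^2+bz+c$. This holds classically where $z>0$, and in the Dini sense where $z=0$, as already established in the text.

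\emph{Comparison.} This step is the main obstacle, since only a Dini-derivative inequality is available and the right-hand side is merely locally Lipschitz. I will use the standard comparison argument with a strict perturbation. For $\varepsilon>0$, let $y_\varepsilon$ solve $y_\varepsilon'=ay_\varepsilon^2+by_\varepsilon+c+\varepsilon$ with $y_\varepsilon(0)=\varepsilon$. Suppose $z\le y_\varepsilon$ fails somewhere on a compact subinterval $[0,t^*]$ of the existence interval of $y$. Let $t_1$ be the first time with $z(t_1)=y_\varepsilon(t_1)$; then $t_1>0$ because $z(0)=0<\varepsilon$. At $t_1$ we have $D^+z(t_1)\ge y_\varepsilon'(t_1)$. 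On the other hand,
\[
D^+z(t_1)\le ay_\varepsilon(t_1)^2+by_\varepsilon(t_1)+c<y_\varepsilon'(t_1),
\]
a contradiction. Hence $z\le y_\varepsilon$ on $[0,t^*]$. By continuous dependence on parameters and initial data for the locally Lipschitz Riccati right-hand side, $y_\varepsilon\to y$ uniformly on $[0,t^*]$ as $\varepsilon\to0$, and this limit is the maximal solution. It follows that $z\le y$ on the common interval of existence.
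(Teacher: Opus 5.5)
Your proposal is correct and follows essentially the paper's route: bound the time-dependent coefficients by their suprema to get $D^+z\le az^2+bz+c$ (classically where $z>0$, in the Dini sense where $z=0$), then conclude by a comparison principle. The differences are that you prove the comparison lemma yourself by the strict-perturbation argument and you justify finiteness of $a$, $b$, $c$ and $M$, whereas the paper simply cites Picard--Lindel\"of and Khalil's Lemma~3.4 and asserts finiteness in the statement.

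One small point: at the \emph{first touching} time you only control left difference quotients. The claim $D^+z(t_1)\ge y_\varepsilon'(t_1)$ therefore needs one of two fixes. Either define $t_1\coloneqq\inf\{t: z(t)>y_\varepsilon(t)\}$, or observe that $z(t_1)=y_\varepsilon(t_1)\ge\varepsilon>0$, so $z$ is classically differentiable at $t_1$.
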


\begin{proof}
    Let \(G(t,s):=a s^2 + b s + c\). Since \(G\) is continuous in \(t\) and locally Lipschitz in \(s\), \ie, for any \(R>0\), all \(t\in[0,T]\), and all \(s,w\in[-R,R]\),
    \[
        |G(t,s)-G(t,w)| = |a(s^2-w^2)+b(s-w)| \le |s-w|\,\bigl(|a||s+w|+|b|\bigr)
        \le |s-w|\,(2|a|R+|b|),
    \]
    by Picard--Lindel{\"o}f theorem, there exists a unique solution near \(t=0\) with \(y(0)=z(0)\). Note that the stability hypothesis, \ie, \(\|\bm{e}\|_{2}<\infty\) or equivalently \(\|\bm{u}_{h}\|_{\H}<\infty\), implies that \(z\) is bounded on \([0,T]\) and thus cannot blow up over this time interval. In contrast, the Riccati solution, \(y(t)\), may blow up in finite time when \(a>0\). 

    Let \(I\) denote the common interval of existence of \(z\) and \(y\). Since \(z(t)=\|\bm e(t)\|_{\H}\) is continuous, and since \cref{eq:z_inequality} holds in the classical sense when \(z(t)>0\) and in the upper right Dini derivative sense when \(z(t)=0\), we have
    \[
        D^+z(t)\le G(t,z(t)),\qquad z(0)=y(0)=0.
    \]
    Therefore, the standard comparison lemma for scalar differential inequalities, see \eg, \cite[Lemma 3.4]{khalil2002nonlinear}, implies that \(z(t)\le y(t)\) for all \(t\in I\).
\end{proof}

\begin{remark}
    On a finite mesh, the coefficients in \cref{eq:riccati_energy_error}, play different roles in terms of impacting the error growth and blow-up times. The coefficients dictate the maximum error growth that may be achieved by a specific discretization. We summarize the impact of the coefficients on the behaviour of the solution of \cref{eq:riccati_energy_error} as follows:
    \begin{itemize}
        \item \(a\) is associated with the magnitude of the Hessian of the flux with respect to the solution, \ie, \(f''(\bm{u})\). Hence, its value is zero for linear PDEs.
        \item \(b\) is associated with the maximum entry of the flux Jacobian difference matrix, \(|\A^*|_{\infty}\), which measures how fast the flux Jacobian changes on a given stencil, \ie, \(f_{i}'-f_{j}'\).
        \item \(c\) is associated with the truncation error, and hence, can be reduced by optimizing operators to reduce the truncation error coefficient.
        \item For a fixed \(b\), \(a\) and \(c\) govern the error growth and blow-up mode by controlling the sign of \(b^2 - 4ac\).
    \end{itemize}
\end{remark}

The main idea for the convergence analysis is to bound \(z(t)\) by \(y(t)\) and show that \(y(t)\) tends to zero with mesh refinement. However, as stated earlier, \(y(t)\) can blow up in finite time; this is established in the proposition below.
\begin{proposition}\label{prop:csbp_error_estimate_divergence_form}
    On a finite mesh, the solution to the Riccati ODE \cref{eq:riccati_energy_error}, associated with the C-SBP discretization given in \cref{eq:csbp_disc_split_form}, of the scalar hyperbolic PDE \cref{eq:nonlinear_prob} can blow up in finite time unless \(a = 0\) or \(c=0\). Furthermore, the blow-up time is completely determined by the coefficients of the error equation. In the specific case when \(a=0\) and \(b\neq 0\), the solution can grow exponentially in time, and if \(a=b=0\), then it can grow only up to linearly in time.
\end{proposition}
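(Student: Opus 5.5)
The plan is to solve the constant-coefficient Riccati initial-value problem \cref{eq:riccati_energy_error} explicitly by separation of variables, with $a\ge 0$, $b\ge 0$, $c\ge 0$ by construction in \cref{prop:riccati_envelope}. The cases are split according to whether $a=0$ or $a>0$. For $a>0$ they are further split by the sign of the discriminant $\Delta\coloneqq b^2-4ac$. In each case the goal is a closed-form $y(t)$, from which we read off whether and when the denominator vanishes.

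\textbf{Case $a=0$.} The equation is linear, $y'=by+c$ with $y(0)=0$.
\begin{itemize}
\item If $b\neq0$, then $y(t)=\tfrac{c}{b}\bigl(e^{bt}-1\bigr)$. This solution exists for all $t$ and grows at most exponentially.
\item If $b=0$, then $y(t)=ct$, which grows linearly.
\end{itemize}

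\textbf{Case $c=0$.} Then $y\equiv0$ is a solution. By uniqueness (Picard--Lindel\"of, as already invoked in \cref{prop:riccati_envelope}), it is the solution, so no blow-up occurs.

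\textbf{Case $a>0$, $c>0$.} Write $G(y)=ay^2+by+c$ and integrate $\mathrm{d}y/G(y)=\mathrm{d}t$ from $0$. Since $G(y)>0$ for all $y\ge0$, the solution $y$ is strictly increasing from $0$. It blows up exactly at
\[
t^*=\int_0^\infty \frac{\mathrm{d}s}{as^2+bs+c},
\]
which is finite because the integrand decays like $s^{-2}$. This integral is then evaluated in three subcases.
\begin{itemize}
\item $\Delta<0$: set $\omega=\sqrt{4ac-b^2}$. Then $y(t)=\tfrac{1}{2a}\bigl(\omega\tan(\tfrac{\omega t}{2}+\theta_0)-b\bigr)$ with $\tan\theta_0=b/\omega$, and
\[
t^*=\tfrac{2}{\omega}\Bigl(\tfrac{\pi}{2}-\arctan\tfrac{b}{\omega}\Bigr).
\]
\item $\Delta=0$: $y(t)=\tfrac{1}{a}\Bigl(\tfrac{1}{1/r-at}\Bigr)-r$-type form, more precisely $y+\tfrac{b}{2a}=\bigl(\tfrac{2a}{b}-at\bigr)^{-1}$, giving $t^*=2/b$.
\item $\Delta>0$: both roots $r_\pm=\tfrac{-b\pm\sqrt\Delta}{2a}$ are negative, since $b>0$ and $c>0$. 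Partial fractions give $\tfrac{y-r_+}{y-r_-}=\tfrac{r_+}{r_-}e^{\sqrt\Delta\,t}$, so
\[
t^*=\tfrac{1}{\sqrt\Delta}\ln\tfrac{r_-}{r_+}>0.
\]
\end{itemize}
In every subcase $t^*$ depends only on $(a,b,c)$, which proves the claim that the blow-up time is determined by the coefficients.

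The main obstacle is bookkeeping rather than depth. Three things need care: checking signs of the roots, so that the log and arctan branches are correct and $t^*>0$; confirming that the denominator vanishes only at $t^*$ and not earlier; and justifying the phrase ``can blow up''. For the last point, note that \cref{prop:riccati_envelope} yields only an upper envelope, so the blow-up statement concerns $y$, not the actual error $z$. I would stress this in the write-up. The cleanest uniform argument for blow-up is the integral formula for $t^*$ above, with the explicit subcase formulas serving only to display the dependence on $a$, $b$ and $c$.
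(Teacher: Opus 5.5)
Your proposal is correct and follows essentially the same route as the paper's appendix proof. Both obtain explicit separation-of-variables solutions for $a=0$ (with $b=0$ or $b>0$), use the trivial solution for $c=0$, and for $a>0$ split by the sign of $\Delta=b^2-4ac$, giving the same blow-up times $\pi/(2\sqrt{ac})$, $\frac{1}{\sqrt{\Delta}}\ln(r_-/r_+)$, $2/b$ and $(\pi-2\arctan(b/\sqrt{-\Delta}))/\sqrt{-\Delta}$; the paper lists $b=0$ as a separate case, which your $\Delta<0$ formula already covers.

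Your uniform formula $t^*=\int_0^\infty \mathrm{d}s/(as^2+bs+c)$ is a worthwhile addition that the paper does not use. It shows directly that blow-up does occur whenever $a,c>0$ and that no earlier singularity arises, points the paper only reads off from the explicit formulas. Delete the garbled first expression in your $\Delta=0$ bullet; the ``more precisely'' formula is the correct one.
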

\begin{proof}
    The proof is included in the appendix, see \cref{sec:riccati_blow_up_time}.
\end{proof}
\begin{remark}
    The blow-up times stated in \cref{prop:csbp_error_estimate_divergence_form} are specific to the Riccati equation \cref{eq:riccati_energy_error}, not the actual error equation \cref{eq:error_equation_Hnorm}; the latter remains bounded, as the numerical error is bounded due to the stability of the semi-discrete scheme.
\end{remark}

To be able to use the idea of bounding \(z(t)\) by \(y(t)\) for convergence purposes, we must show that there exists a mesh size, \(h>0\), for which \(y(t)\) is bounded for the time interval \([0,T]\) as \(h\to 0\). The next lemma shows this is the case.

\begin{lemma}\label{lem:riccati_no_blowup_smallh}
Assume that the numerical solution is stable in the sense of \cref{eq:energy_2_disc} and that, by \cref{prop:constant_coefficient_error_equation,prop:riccati_envelope}, the coefficients of the constant-coefficient Riccati initial-value problem \cref{eq:riccati_energy_error} satisfy 
\[
    a =\fn{O}(h^{-1-d/2}), \qquad b \ge 0 \text{ (constant)}, \qquad c =\fn{O}(h^{p}),
\]
with \(p>1+d/2\), and that \(a\ge 0\), \(b\ge0\), and \(c\ge 0\) for all sufficiently small \(h\). Then, for any fixed final time \(T>0\), there exists \(h_0=h_0(T)>0\) such that the solution \(y(t)\) of \cref{eq:riccati_energy_error} exists on the entire interval \([0,T]\) for all \(0<h<h_0\). Equivalently, the blow-up time \(t_*(h)\) of the Riccati solution satisfies
\[
    t_*(h) > T, \qquad \forall\, 0<h<h_0.
\]
\end{lemma}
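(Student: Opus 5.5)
We work directly with the explicit blow-up time of the constant-coefficient Riccati problem \cref{eq:riccati_energy_error} and show that it tends to infinity as \(h\to0\). The key quantity is the product
\[
    ac=\fn{O}(h^{-1-d/2})\,\fn{O}(h^{p})=\fn{O}(h^{p-1-d/2})\to 0,
\]
which vanishes as \(h\to 0\) because \(p>1+d/2\). Since \(b\ge0\) is independent of \(h\), the discriminant \(b^2-4ac\) is eventually close to \(b^2\).

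First we dispose of the trivial cases. If \(a=0\), the equation is linear. Its solution is \(y(t)=\frac{c}{b}(e^{bt}-1)\) when \(b>0\), or \(y=ct\) when \(b=0\); either way it exists globally. If \(c=0\), then \(y\equiv0\) by uniqueness (Picard--Lindel\"of, as in \cref{prop:riccati_envelope}). So we may assume \(a>0\) and \(c>0\). Since \(G(s)=as^2+bs+c>0\) for \(s\ge0\), the solution \(y\) is increasing and nonnegative. Separating variables gives the blow-up time
\[
    t_*(h)=\int_0^\infty\frac{\dd s}{as^2+bs+c},
\]
and \(y\) exists on \([0,T]\) if and only if \(t_*(h)>T\). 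This is consistent with the formulas of \cref{prop:csbp_error_estimate_divergence_form} in \cref{sec:riccati_blow_up_time}, but we will bound the integral directly rather than split into sign cases of \(b^2-4ac\).

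The main step is a lower bound on this integral. In the case \(b>0\), fix any \(S>0\) and drop the tail:
\[
    t_*\ge\int_0^S\frac{\dd s}{as^2+bs+c}.
\]
Choose the cutoff \(S=\sqrt{c/a}\). On \([0,S]\) we have \(as^2\le c\), so \(as^2+bs+c\le bs+2c\). Hence
\[
    t_*\ge \frac1b\ln\!\Big(1+\frac{bS}{2c}\Big)=\frac1b\ln\!\Big(1+\frac{b}{2\sqrt{ac}}\Big).
\]
This tends to \(+\infty\) as \(h\to0\), since \(ac\to0\) while \(b\) is fixed and positive. In the case \(b=0\), \(t_*=\pi/(2\sqrt{ac})\to\infty\) directly. (If \(b=0\) only for some \(h\), the uniform bound \(as^2+bs+c\le bs+2c\le b_{\max}s+2c\) still works on \([0,S]\), and the case \(b\to0\) is handled by \(\frac{1}{b}\ln(1+bx)\ge \frac{\ln(1+b_{\max}x)}{b_{\max}}\) for \(b\le b_{\max}\).) In every case there is a uniform lower bound \(t_*(h)\ge \Phi(ac)\) with \(\Phi(\epsilon)\to\infty\) as \(\epsilon\to0\).

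Finally, given \(T\), use \(ac\le K h^{p-1-d/2}\) for \(h<h_1\) to choose \(h_0\le h_1\) with \(\Phi(Kh_0^{p-1-d/2})>T\). Since \(\Phi\) is decreasing in its argument, \(t_*(h)>T\) for all \(0<h<h_0\).

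The main obstacle is making the blow-up estimate depend only on the product \(ac\) uniformly in \(h\). The cutoff \(S=\sqrt{c/a}\) is the device that achieves this, sidestepping the case analysis of the closed-form Riccati solutions. The only other care needed is confirming that the constants in the \(\fn{O}\)-bounds on \(a\) and \(c\) from \cref{prop:constant_coefficient_error_equation} are uniform for \(h\le h_1\), which follows from the sup definitions in \cref{prop:riccati_envelope}.
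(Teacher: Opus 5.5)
Your proof is correct, but it takes a different route from the paper's.

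The paper relies on the closed-form blow-up times derived in the appendix. For $b>0$ it first uses $ac\to 0$ to guarantee $\Delta=b^2-4ac>0$. It then rewrites $t_*=\frac{1}{\sqrt{\Delta}}\ln\frac{(b+\sqrt{\Delta})^2}{4ac}$ and uses $\sqrt{\Delta}\le b$ to get $t_*\ge\frac{1}{b}\ln\frac{b^2}{4ac}$. For $b=0$ it uses $t_*=\pi/(2\sqrt{ac})$ directly.

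You instead use the quadrature representation $t_*=\int_0^\infty \mathrm{d}s/(as^2+bs+c)$ and truncate it at $S=\sqrt{c/a}$. This gives $t_*\ge\frac{1}{b}\ln\bigl(1+\frac{b}{2\sqrt{ac}}\bigr)$ with no explicit Riccati solution and no sign analysis of $\Delta$.

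The paper's route has two advantages:
\begin{itemize}
\item Its estimate is sharp to leading order. It behaves like $\frac{2}{b}\ln\frac{b}{2\sqrt{ac}}$, which matches the exact blow-up time asymptotically. Yours is asymptotically smaller by a factor of about $2$ in the logarithm.
\item It reuses formulas the paper needs anyway for the case analysis in the convergence theorem.
\end{itemize}

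Your route has two advantages of its own:
\begin{itemize}
\item It is self-contained, and the bound is positive for every $a,c>0$. The paper's bound only becomes informative once $b^2>4ac$.
\item The map $b\mapsto\frac{1}{b}\ln(1+bx)$ is monotone decreasing, so your estimate holds uniformly for $b\in[0,b_{\max}]$. It would therefore still work if $b$ were allowed to vary with $h$. The paper avoids that situation only by defining $b$ as a supremum over $\tilde h\in(0,h_1]$.
\end{itemize}

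The factor-of-two loss does not matter here, since the lemma only needs $t_*(h)\to\infty$ as $h\to 0$.
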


\begin{proof}
    Fix \(T>0\). By construction, for all sufficiently small \(h\) one has \(a\ge 0\), \(b\ge 0\), \(c\ge 0\), with \(a=\fn{O}(h^{-1-d/2})\), \(c =\fn{O}(h^{p})\) and \(p>1+d/2\). In particular, 
    \[
        ac=\fn{O}(h^{p-1-d/2})\to 0 \qquad \text{as } h\to 0.
    \]
    If \(a=0\), then \cref{eq:riccati_energy_error} is linear and the solution exists for all time, so \(t_*(h)=\infty\). Hence, we assume \(a>0\). We also note that when \(a>0\), \(b\ge 0\), and \(c=0\), the Riccati problem becomes, 
    \[
        y'(t) = ay(t)^2 + by(t), \qquad y(0) = 0.
    \]
    Since \(y(t)=0\) satisfies the above, by uniqueness of the solution, we conclude that the solution exists at all times. This means \(t_*(h)=\infty\) for this case too; hence, we further assume \(c>0\), and consider the following two cases only.

    \paragraph{Case 1. (\(a>0\), \(b>0\), and \(c>0\)).}
    Since \(b>0\) is constant, \(ac\to 0\) as \(h\to 0\) implies the existence of \(h_{0}>0\) such that \(4ac < b^{2}\) for all \(0<h<h_{0}\), and hence \(\Delta(h)>0\) on \(0<h<h_{0}\), where \(\Delta (h)\coloneqq b^2-4ac\). In this regime, the blow-up time is given in \cref{eq:case4_blow_up} as
    \[
        t_*(h)
        = \frac{1}{\sqrt{\Delta}}
        \ln\!\left(\frac{-b-\sqrt{\Delta}}{-b+\sqrt{\Delta}}\right).
    \]
    Moreover, we can write
    \(
        (-b-\sqrt{\Delta})(-b+\sqrt{\Delta})=b^2-\Delta=4ac
    \)
    and thus, we obtain
    \[
        \frac{-b-\sqrt{\Delta}}{-b+\sqrt{\Delta}}
        =\frac{(-b-\sqrt{\Delta})^2}{(-b-\sqrt{\Delta})(-b+\sqrt{\Delta})}
        =\frac{(b+\sqrt{\Delta})^2}{4ac}
        \ge \frac{b^2}{4ac}.
    \]
    Since \(\sqrt{\Delta}\le b\), and the logarithm is an increasing function, it follows
    \[
        t_*(h)
        = \frac{1}{\sqrt{\Delta}}\ln\!\left(\frac{(b+\sqrt{\Delta})^2}{4ac}\right)
        \ge \frac{1}{b}\ln\!\Big(\frac{b^2}{4ac}\Big).
    \]
    Because \(ac\to 0\), the right-hand side diverges to \(+\infty\) as \(h\to 0\), so there exists \(h_0=h_0(T)>0\) such that \(t_*(h)>T\) for all \(0<h<h_0\).

    \paragraph{Case 2. (\(a>0\), \(b=0\), and \(c>0\)).}
    In this case, we have \(\Delta(h)=b^2-4ac=-4ac<0\) for all sufficiently small \(h\) with \(c>0\), and the blow-up time is given in \cref{eq:case3_blowup} as
    \[
        t_*(h)=\frac{\pi}{2\sqrt{ac}}.
    \]
    Since \(ac=\fn{O}(h^{p-1-d/2})\to 0\) with \(p>1+d/2\), we have \(t_*(h)\to\infty\) as \(h\to 0\). Therefore, for any fixed \(T>0\) there exists \(h_0=h_0(T)>0\) such that \(t_*(h)>T\) for all \(0<h<h_0\). Hence, in both cases, the solution exists on \([0,T]\) for all sufficiently small \(h\), as desired.
\end{proof}

Now that we can bound \(z(t)\) by \(y(t)\) at all time below some mesh resolution by \cref{lem:riccati_no_blowup_smallh}, it only remains to show that \(y(t)\to 0\) as \(h\to 0\) to establish convergence.

\begin{theorem}\label{thm:csbp_convergence}
Let \(\fn{U}\in C^1(\Omega\times[0,T])\) be a solution of \cref{eq:nonlinear_prob} for some fixed \(T>0\), and assume the scalar flux \(\fn{F}:\mathbb{R}\to\mathbb{R}\) satisfies \(\fn{F}\in C^2(\mathbb{R})\) with globally bounded second derivative. Suppose the stable split-form C-SBP discretization given in \cref{eq:csbp_disc_split_form} is consistent of order \(p>1+d/2\) in the sense that the truncation error satisfies \(\|\bm\tau(t)\|_\infty=\fn{O}(h^p)\) uniformly for \(t\in[0,T]\). Assume further that \(\bm e(0)=\bm 0\). Then, for each \(t\in[0,T]\), the numerical solution converges, \ie,
\[
    \lim_{h\to 0}\|\bm u(t)-\bm u_h(t)\|_{\H} = 0.
\]
\end{theorem}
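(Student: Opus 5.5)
The proof combines the comparison result of \cref{prop:riccati_envelope} with the no-blow-up result of \cref{lem:riccati_no_blowup_smallh}. It then shows that the Riccati envelope \(y(t)\) itself tends to zero uniformly on \([0,T]\) as \(h\to 0\).

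\textbf{Step 1 (reduction to the envelope).} Fix \(T\) and take \(h<\min(h_0(T),h_1)\). The hypotheses of \cref{lem:riccati_no_blowup_smallh} hold, since \cref{prop:constant_coefficient_error_equation} gives \(a=\fn{O}(h^{-1-d/2})\), \(b\) a mesh-independent constant, and \(c=\fn{O}(h^p)\) with \(p>1+d/2\). Hence \(y\) exists on all of \([0,T]\). Because \(\bm e(0)=\bm 0\), we have \(z(0)=y(0)=0\), and \cref{prop:riccati_envelope} gives
\[
0\le\|\bm e(t)\|_{\H}=z(t)\le y(t),\qquad t\in[0,T].
\]
It therefore suffices to show \(\sup_{[0,T]}y\to0\).

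\textbf{Step 2 (explicit smallness of \(y\)).} I would avoid the closed-form solution and use a bootstrap or barrier argument. Set \(K\coloneqq 2cTe^{bT}\), which is \(\fn{O}(h^p)\).

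First suppose \(y\le K\) holds on some interval \([0,t_1]\). There \(ay^2\le aK\,y\), so \(y'\le (b+aK)y+c\). Now \(aK=\fn{O}(h^{p-1-d/2})\to0\). Choose \(h\) so small that \(aKT\le\ln 2 - \) (any margin making \(e^{aKT}<2\)). Gronwall then gives
\[
y(t)\le cTe^{(b+aK)T}<2cTe^{bT}=K
\]
with strict inequality. The condition \(y\le K\) is therefore never saturated. A continuity argument, taking the first time \(y=K\) and deriving a contradiction, gives \(y\le K\) on the whole of \([0,T]\); this also re-proves \cref{lem:riccati_no_blowup_smallh}.

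Alternatively, one can apply the monotone dependence of \(t\mapsto y\) on the coefficients and use the explicit solution from \cref{sec:riccati_blow_up_time} in the case \(\Delta>0\). That formula is \(y=\frac{2c(e^{\sqrt\Delta t}-1)}{(b+\sqrt\Delta)-(\sqrt\Delta-b)\ldots}\). It can be bounded directly by \(\fn{O}(c)\) once \(ac\) is small, but the barrier argument is cleaner.

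\textbf{Step 3 (conclusion).} Combining the two steps,
\[
\|\bm u(t)-\bm u_h(t)\|_{\H}\le 2\sqrt{|\Omega|}\,T e^{bT}\sup_{s\in[0,T]}\|\bm\tau_*(s)\|_\infty=\fn{O}(h^p)\to0,
\]
uniformly in \(t\in[0,T]\). This even yields a rate.

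\textbf{Main obstacle.} The delicate point is ensuring that \(b\) is genuinely \(h\)-independent and that \(c=\fn{O}(h^p)\) holds uniformly in \(t\), so that \(aK\to0\). This is exactly where \(p>1+d/2\) enters: the blow-up rate of \(a\) must be beaten by the decay of \(K\sim c\).

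A subtlety is that \(a\) as defined uses \(c_r=\sup|f''|\), the global bound assumed in the theorem, rather than a bound depending on the unknown \(\bm u_h\). With that choice \(a\) is well defined independently of the numerical solution, and I would make this explicit.
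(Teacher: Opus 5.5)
Your proof is correct, but Step 2 takes a different route from the paper's. Step 1 is the same: both reduce to $z\le y$ via \cref{prop:riccati_envelope,lem:riccati_no_blowup_smallh}.

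The paper then splits into four coefficient regimes: $a=b=0$; $a=0,\,b>0$; $a>0,\,b=0$; and $a,b>0$ with $\Delta>0$. In each it passes to the limit in the closed-form Riccati solution from \cref{app:riccati}. In the last case it uses $r_1=-2c/(b+\sqrt{\Delta})\to 0$ and $r_1/r_2=4ac/(b+\sqrt{\Delta})^2\to 0$, and it relies on the blow-up-time formulas to keep the denominator away from zero.

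Your bootstrap with $K=2cTe^{bT}$ replaces all of this with one comparison. On $\{0\le y\le K\}$ the quadratic term is dominated by $aK\,y$, where $aK=\fn{O}(h^{p-1-d/2})\to 0$. The linear Gronwall bound $y\le cTe^{(b+aK)T}$ then closes strictly once $aKT<\ln 2$.

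Your route has several advantages:
\begin{itemize}
\item It needs no case analysis and no explicit formulas.
\item It re-proves \cref{lem:riccati_no_blowup_smallh} as a by-product, so existence and smallness come together.
\item The bound is uniform in $t\in[0,T]$, not pointwise.
\item It gives an explicit rate, $\|\bm e(t)\|_{\H}\le 2Te^{bT}c=\fn{O}(h^p)$, which the paper's formulas contain only implicitly.
\item It would carry over to time-dependent coefficients.
\end{itemize}
The paper's route gives the exact envelope instead, which ties directly to its discussion of how $a$, $b$, $c$ govern error growth and blow-up.

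Three small points to tidy up:
\begin{itemize}
\item Using $ay^2\le aKy$ needs $y\ge 0$. This holds because $y(0)=0$ and $y'\ge c>0$ whenever $y\ge 0$.
\item When $c=0$, $K=0$ and the strict inequality degenerates, but then $y\equiv 0$ trivially.
\item In your optional closed form the denominator should read $(b+\sqrt{\Delta})-(b-\sqrt{\Delta})e^{\sqrt{\Delta}t}$. You do not rely on it, so this does not affect the proof.
\end{itemize}
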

\begin{proof}
Fix any \(t\in[0,T]\). By \cref{prop:riccati_envelope}, we have
\[
    \|\bm e(s)\|_{\H} = z(s)\le y(s), \qquad \forall s\in[0,T],
\]
whenever the Riccati envelope, \(y\), exists on \([0,T]\). By \cref{lem:riccati_no_blowup_smallh}, there exists \(h_0=h_0(T)>0\) such that for all \(0<h<h_0\) the solution \(y(t)\) of \cref{eq:riccati_energy_error} exists on \([0,T]\). Hence, it suffices to show
\[
    \lim_{h\to 0} y(t)=0,
\]
and then conclude \(\lim_{h\to 0}\|\bm u(t)-\bm u_h(t)\|_{\H}=0\) by domination.

By \cref{prop:constant_coefficient_error_equation,prop:riccati_envelope}, we have
\[
    a=\fn{O}(h^{-1-d/2}),\qquad b \ge 0 \text{ (constant)},\qquad c=\fn{O}(h^{p}),
\]
hence, as \(h\to 0\), we have \(c(h)\to 0\) and, for \(p>1+d/2\), \(ac =\fn{O}(h^{p-1-d/2}) \to 0\). This implies that for sufficiently small \(h\) and \(a,c>0\), either \(b>0\) and then \(\Delta(h)>0\), or \(b=0\) and then \(\Delta(h) < 0\). Note that the case \(c=0\) is trivial, since the solution becomes \(y(t)= 0\) and convergence is immediate. Therefore, we consider the following four coefficient regimes of the constant-coefficient Riccati ODE, whose solutions are given in \cref{app:riccati}.

\paragraph{Case 1.} \(a=0\) and \(b=0\).
Then \(y'=c\), \(y(0)=0\), so \(y(t)=ct\). Since \(c(h)\to 0\), we have \(y(t)\to 0\).

\paragraph{Case 2.} \(a=0\) and \(b > 0\). Then, \(y'=by+c\), \(y(0)=0\), and the solution is given by \cref{eq:case2_solution}
\[
    y(t)=\frac{c}{b}\left(e^{bt}-1\right).
\]
Here, \(c(h)\to 0\). Moreover, \(b=\fn{O}(h^{0})\), so \(e^{bt}-1\) stays bounded along any sequence \(h\to 0\) for which this case holds, and thus \(y(t)\to 0\).

\paragraph{Case 3.} \(a>0\), \(b=0\), and \(c>0\).
Then, \(y'=ay^2+c\), \(y(0)=0\), and we obtain the solution given in  \cref{eq:case3_solution},
\[
    y(t)=\sqrt{\frac{c}{a}}\tan\bigl(\sqrt{ac}\,t\bigr).
\]
Since \(ac\to 0\), we have \(\sqrt{ac}\,t\to 0\), and using \(\tan x \sim x\) as \(x\to 0\),
\[
    y(t)\sim \sqrt{\frac{c}{a}}\; \sqrt{ac}\,t = ct \to 0.
\]

\paragraph{Case 4.} \(a> 0\), \(b> 0\), and \(\Delta=b^2-4ac>0\).
The explicit solution is \cref{eq:solution_Dpos},
\[
    y(t)=\frac{r_1\bigl(1-e^{\sqrt{\Delta}\,t}\bigr)}{1-(r_1/r_2)e^{\sqrt{\Delta}\,t}},
    \qquad
    r_1=\frac{-b+\sqrt{\Delta}}{2a},\quad r_2=\frac{-b-\sqrt{\Delta}}{2a}.
\]
First, note that since \((-b+\sqrt{\Delta})(-b-\sqrt{\Delta})=4ac\), we have
\[
    r_1=-\frac{2c}{b+\sqrt{\Delta}}.
\]
Hence, \(r_1\to 0\) because \(c(h)\to 0\) and \(b+\sqrt{\Delta}\ge b>0\) in this case. Next, we note that \(\sqrt{\Delta}\) is \(\fn{O}(1)\), since \(b >0\), which implies \(e^{\sqrt{\Delta}t}\) remains bounded as \(h\to 0\). Finally, \(r_1/r_2=(b-\sqrt{\Delta})/(b+\sqrt{\Delta})=4ac/(b+\sqrt{\Delta})^2 \to 0\) as \(h\to 0\), so the denominator, \(1-(r_1/r_2)e^{\sqrt{\Delta}t}\to 1\) for all sufficiently small \(h\) (satisfying \(t\le T<t_*(h)\) in \cref{lem:riccati_no_blowup_smallh}). Therefore, the numerator tends to \(0\) (since \(r_1\to 0\) and \(e^{\sqrt{\Delta}t}\) is bounded),
and we conclude \(y(t)\to 0\). Since \(t\) is fixed arbitrarily, the result holds for any \(t\in[0,T]\).
\end{proof}

\cref{thm:csbp_convergence} establishes convergence for the split-form discretization of the scalar hyperbolic PDE \cref{eq:nonlinear_prob} under suitable assumptions; however, it also indicates that other forms of stable discretizations, including those for systems of equations, converge under similar assumptions if their error equations can be written in the form of the constant-coefficient Riccati equation \cref{eq:riccati_energy_error}, with coefficients that satisfy the mesh-dependence conditions in \cref{prop:constant_coefficient_error_equation}. Our task in the next section is to show that these conditions are satisfied for the split-form discretizations of symmetric systems of hyperbolic PDEs.

\section{Convergence analysis for nonlinear symmetric systems of equations}\label{sec:systems}
We now consider the entropy-stable split-form discretization introduced in \cref{sec:split_form_stability} for systems of equations with homogeneous fluxes. As in the previous section, for notational clarity, we present the analysis for a single direction and suppress the spatial index \(i\); the multidimensional result follows by summing identical contributions over \(i\in\{1,\dots,d\}\). Furthermore, we retain the scalar notation for the coefficient symbols \(c_R\), \(c_F\), \(c_S\), \(c_r\), etc., with the understanding that in the systems case they denote the corresponding systems-level constants obtained from block-vector estimates.

\paragraph{Semi-discrete split form.}
The (single-direction) split-form C-SBP semi-discretization of \cref{eq:nonlinear_prob} is
\begin{equation}\label{eq:systems_split_form_disc_1d}
    \der[\bm{u}_h]{t}
    = -\alpha_1\,\overline{\D}\,\bm{f}(\bm{u}_h) - \alpha_2\,\bar{\A}(\bm{u}_h)\,\overline{\D}\,\bm{u}_h,
\end{equation}
where \(\alpha_1\) and \(\alpha_2\) are the constant splitting coefficients that coincide with those in \cref{eq:split_form_disc}. The matrix \(\bar{\A}\left(\bm{u}_h\right)\) is block diagonal with \(\A\left(\bm{u}_{h,k}^{(m)}\right)\in\IR{n_c\times n_c}\) at diagonal block \((k-1)n_p+m\), and \(\overline{\D}=\bigl[\overline{\H}\bigr]^{-1}\overline{\Q}\) is the global C-SBP differentiation operator constructed as in \cref{eq:global_D_construction} using Kronecker products with \(\I_{n_c}\).

\paragraph{Truncation error and error equation.}
Substituting the exact solution into \cref{eq:systems_split_form_disc_1d} defines the truncation error vector
\begin{equation}\label{eq:systems_split_form_trunc}
    \bm{\tau}
    \coloneqq
    \der[\bm{u}]{t}
    +\alpha_1\,\overline{\D}\,\bm{f}(\bm{u})
    +\alpha_2\,\bar{\A}(\bm{u})\,\overline{\D}\,\bm{u}
    = \fn{O}(h^p),
\end{equation}
where \(p\) is the consistency order of the C-SBP operator. Subtracting \cref{eq:systems_split_form_disc_1d} from \cref{eq:systems_split_form_trunc} and using \(\bm{e}\coloneqq \bm{u}-\bm{u}_h\), we obtain
\begin{equation}\label{eq:systems_split_form_error_eq0}
    \der[\bm{e}]{t}
    =
    -\alpha_1\,\overline{\D}\bigl(\bm{f}(\bm{u})-\bm{f}(\bm{u}_h)\bigr)
    -\alpha_2\Bigl(\bar{\A}(\bm{u})\,\overline{\D}\bm{u}-\bar{\A}(\bm{u}_h)\,\overline{\D}\bm{u}_h\Bigr)
    +\bm{\tau}.
\end{equation}

\paragraph{Taylor expansion of the flux.}
Applying a Taylor expansion (componentwise on each node) gives
\begin{equation}\label{eq:systems_flux_taylor}
    \bm{f}(\bm{u})-\bm{f}(\bm{u}_h)
    =
    \bar{\A}(\bm{u})\,\bm{e} - \bm{R}_f(\bm{e}),
\end{equation}
where \(\bm{R}_f(\bm{e})\) is a remainder satisfying the pointwise quadratic bound
\begin{equation}\label{eq:systems_flux_remainder_bound}
    \|\bm{R}_{f,k}^{(m)}(\bm{e})\|_2 \le \frac{c_r}{2}\left\|\bm{e}_{k}^{(m)}\right\|_2^2,
\end{equation}
with a mesh-independent constant \(c_r\) determined by the supremum of the Hessian of the component fluxes with respect to \(\fnb{U}\).
Substituting \cref{eq:systems_flux_taylor} into \cref{eq:systems_split_form_error_eq0} yields
\begin{align}
    \der[\bm{e}]{t}
    &=
    -\alpha_1\,\overline{\D}\,\bar{\A}(\bm{u})\,\bm{e}
    +\alpha_1\,\overline{\D}\,\bm{R}_f(\bm{e})
    -\alpha_2\Bigl(\bar{\A}(\bm{u})\,\overline{\D}\bm{u}-\bar{\A}(\bm{u}_h)\,\overline{\D}\bm{u}_h\Bigr)
    +\bm{\tau}.
    \label{eq:systems_split_form_error_eq1}
\end{align}

\paragraph{Energy identity.}
Premultiplying \cref{eq:systems_split_form_error_eq1} by \(\bm{e}^T\overline{\H}\) and simplifying gives
\begin{align}\label{eq:systems_split_form_error_energy0}
    \frac{1}{2}\der[]{t}\|\bm{e}\|_{\H}^2
    &=
    -\alpha_1\,\bm{e}^T\overline{\Q}\,\bar{\A}(\bm{u})\,\bm{e}
    +\alpha_1\,\bm{e}^T\overline{\Q}\,\bm{R}_f(\bm{e})
    -\alpha_2\,\bm{e}^T\overline{\H}\Bigl(\bar{\A}(\bm{u})\,\overline{\D}\bm{u}-\bar{\A}(\bm{u}_h)\,\overline{\D}\bm{u}_h\Bigr)
    +\bm{e}^T\overline{\H}\bm{\tau}.
\end{align}
The last term in \cref{eq:systems_split_form_error_energy0} is bounded exactly as in the scalar case, \cref{eq:term4_bound},
\begin{equation}\label{eq:systems_split_form_trunc_bound}
    |\bm{e}^T\overline{\H}\bm{\tau}|\le \|\bm{\tau}\|_{\H}\,\|\bm{e}\|_{\H}.
\end{equation}
Similarly, the second term in \cref{eq:systems_split_form_error_energy0} is bounded as in \cref{eq:term2_bound},
\begin{equation}\label{eq:systems_split_form_eQR}
    \bigl|\bm{e}^T\overline{\Q}\,\bm{R}_f(\bm{e})\bigr|
    \le
    \frac{c_r}{2}\|\overline{\Q}_*\|_2\,\|\bm{e}\|_2^3
    = c_R\,\|\bm{e}\|_2^3,
\end{equation}
with \(\|\overline{\Q}_*\|_{2}=\max_{\Omega_k}\|\overline{\Q}_k\|_2\) and \(c_R = \frac{c_r}{2}\|\overline{\Q}_*\|_2\).

Using a similar derivation as \cref{eq:hadamard_QF} for the scalar case, it can be shown that the first term on the RHS of \cref{eq:systems_split_form_error_energy0} has the structure
\begin{equation}\label{eq:systems_split_form_eQAe_def}
    \bm{e}^T\overline{\Q}\,\bar{\A}(\bm{u})\,\bm{e}
    =
    \frac{1}{2}\sum_{\Omega_k\in\fn{T}_h}
    \bm{e}_k^T\Bigl(\widehat{\Q}_k\circ \A^*(\bm{u}_k)\Bigr)\bm{e}_k,
\end{equation}
where \(\widehat{\Q}_k\coloneqq \Q_k\otimes\mathbbm{1}\) with \(\mathbbm{1}\in\IR{n_c\times n_c}\) containing ones, and \(\A^*(\bm{u}_k)\) is the block matrix with \((i,j)\)-block entry
\[
    [\A^*(\bm{u}_k)]_{ij} = \A(\bm{u}_k^{(j)})-\A(\bm{u}_k^{(i)}).
\]
Smoothness of \(\A(\cdot)\) implies \(\|\A^*(\bm{u}_k)\|_\infty=\fn{O}(h)\); hence, we obtain
\begin{equation}\label{eq:systems_split_form_eQAe_bound}
    \bigl|\bm{e}^T\overline{\Q}\,\bar{\A}(\bm{u})\,\bm{e}\bigr|
    \le c_F\,\|\bm{e}\|_2^2,
\end{equation}
where \(c_F\coloneqq \frac{\sqrt{n_p\,n_c}}{2}\|\A^*_*\|_\infty\,\|\widehat{\Q}_*\|_2\).

Finally, the remaining term in \cref{eq:systems_split_form_error_energy0} is the \(\alpha_2\) split-form contribution
\begin{equation}\label{eq:systems_split_form_phi0}
    \phi
    \coloneqq
    \alpha_2\,\bm{e}^T\overline{\H}\bigl(\bar{\A}(\bm{u})\,\overline{\D}\bm{u}-\bar{\A}(\bm{u}_h)\,\overline{\D}\bm{u}_h\bigr).
\end{equation}
Using \(\bm{u}=\bm{u}_h+\bm{e}\) and \(\overline{\H}\overline{\D}=\overline{\Q}\), we rewrite
\begin{align}
    \phi
    &=
    \alpha_2\,\bm{e}^T\bar{\A}(\bm{u})\,\overline{\Q}\bm{u}
    -\alpha_2\,\bm{e}^T\bar{\A}(\bm{u}_h)\,\overline{\Q}\bm{u}_h
    \nonumber
    \\
    &=
    \alpha_2\,\bm{e}^T\bar{\A}(\bm{u})\,\overline{\Q}\bm{e}
    +\alpha_2\,\bm{e}^T\bigl(\bar{\A}(\bm{u})-\bar{\A}(\bm{u}_h)\bigr)\overline{\Q}\bm{u}
    -\alpha_2\,\bm{e}^T\bigl(\bar{\A}(\bm{u})-\bar{\A}(\bm{u}_h)\bigr)\overline{\Q}\bm{e}.
    \label{eq:systems_split_form_phi1}
\end{align}
We bound the three terms on the RHS of \cref{eq:systems_split_form_phi1} in turn.

\paragraph{Term 1.}
The first term in \cref{eq:systems_split_form_phi1} has the same Hadamard structure as \cref{eq:systems_split_form_eQAe_def} and is bounded similarly:
\begin{equation}\label{eq:systems_split_form_phi_term1}
    \bigl|\bm{e}^T\bar{\A}(\bm{u})\,\overline{\Q}\bm{e}\bigr|
    \le c_F\,\|\bm{e}\|_2^2.
\end{equation}

For the remaining two terms on the RHS of \cref{eq:systems_split_form_phi1}, we consider the Taylor expansion of \(\A(\cdot)\) about \(\bm{u}\). Since \(\A\) is the Jacobian of \(\fnb{F}\), smoothness of \(\fnb{F}\) implies \(\A\in C^1\), and a Taylor expansion at each node gives
\begin{equation}\label{eq:systems_A_expand}
    \bar{\A}(\bm{u}_h)=\bar{\A}(\bm{u})-\bar{\R}_{A}(\bm{e}),
\end{equation}
where \(\bar{\R}_{A}(\bm{e})\) is block diagonal matrix that satisfies a pointwise bound of the form
\begin{equation}\label{eq:systems_RA_bound}
    \|\bar{\R}_{A,k}^{(m)}(\bm{e})\|_2 \le {c}_r\,\|\bm{e}_{k}^{(m)}\|_2.
\end{equation}

\paragraph{Term 2.}
Using \cref{eq:systems_A_expand}, we bound the second term in \cref{eq:systems_split_form_phi1} by Cauchy--Schwarz:
\begin{align}
    \big|\bm{e}^T\bigl(\bar{\A}(\bm{u})-\bar{\A}(\bm{u}_h)\bigr)\overline{\Q}\bm{u}\big|
    = \bigl|\bm{e}^T\bar{\R}_{A}(\bm{e})\,\overline{\Q}\bm{u}\bigr|
    &=
    \bigl|\bm{e}^T\bar{\R}_{A}(\bm{e})\,\overline{\H}\overline{\D}\bm{u}\bigr|
    \nonumber
    \\
    &\le
    \|\bm{e}^T\bar{\R}_{A}(\bm{e})\|_{\H}\,\|\overline{\D}\bm{u}\|_{\H}.
    \label{eq:systems_split_form_phi_term2_0}
\end{align}
Proceeding as in the scalar split-form estimate \cref{eq:eMH} (since \(\bar{\R}_{A}(\bm{e})\) is block diagonal and satisfies \cref{eq:systems_RA_bound}), we obtain
\begin{equation}\label{eq:systems_eRAH_bound}
    \|\bm{e}^T\bar{\R}_{A}(\bm{e})\|_{\H}
    \le {c}_r\,\frac{1}{\sqrt{w_{\min}}}\,\|\bm{e}\|_{\H}^2.
\end{equation}
Substituting \cref{eq:systems_eRAH_bound} into \cref{eq:systems_split_form_phi_term2_0}, using \(\|\bm{v}\|_{\H}\le \sqrt{w_{\max}}\sqrt{n_p n_c}\|\bm{v}\|_{\infty}\), noting that \(\|\overline{\D}\bm{u}\|_{\H} \le \|\bm{u}_{x}\|_{\H} + \|\bm{\tau}_{u}\|_{\H}\), and repeating similar steps used for \cref{eq:term23_1}, we find 
\begin{align}
    \bigl|\bm{e}^T\bar{\R}_{A}(\bm{e})\,\overline{\Q}\bm{u}\bigr|
    &\le
    {c}_r\, w_{\max}\frac{\sqrt{w_{\max}}}{\sqrt{w_{\min}}}\sqrt{n_p n_c}\,(\|\bm{u}_{x,*}\|_{\infty} + \|\bm{\tau}_{u,*}\|_{\infty})\,\|\bm{e}\|_{2}^2.
    \label{eq:systems_split_form_phi_term2}
\end{align}
Since \(\bm{u}\) is smooth, \(\|\bm{u}_{x,*}\|_{\infty}\) is bounded, and \(\|\bm{\tau}_{u,*}\|_{\infty}=\fn{O}(h^{p})\) vanishes as \(h\to0\); hence, \cref{eq:systems_split_form_phi_term2} contributes a quadratic term of the same type as \cref{eq:systems_split_form_eQAe_bound}. In particular, it modifies the coefficient multiplying \(\|\bm{e}\|_{2}^2\).

\paragraph{Term 3.}
Finally, the third term in \cref{eq:systems_split_form_phi1} is cubic in \(\bm{e}\). Using \cref{eq:systems_A_expand} and repeating the steps used to obtain \cref{eq:eMQe} in the scalar case, we find
\begin{align}
    \bigl|\bm{e}^T\bar{\R}_{A}(\bm{e})\,\overline{\Q}\bm{e}\bigr|
    &\le
    \sum_{\Omega_k\in\fn{T}_h}
    \|\bm{e}_k^T\bar{\R}_{A}(\bm{e}_k)\|_2\,\|\overline{\Q}_k\bm{e}_k\|_2
    \nonumber
    \le
    {c}_r\,\|\overline{\Q}_*\|_2\sum_{\Omega_k\in\fn{T}_h}\|\bm{e}_k\|_2^3
    \nonumber
    \\
    &\le
    {c}_r\,\|\overline{\Q}_*\|_2\,\|\bm{e}\|_2^3
    \nonumber
    \\
    &=
    2{c}_R \|\bm{e}\|_2^3.
    \label{eq:systems_split_form_phi_term3}
\end{align}
Thus, \cref{eq:systems_split_form_phi_term3} modifies the cubic coefficient multiplying \(\|\bm{e}\|_2^3\). As in the scalar case, we define \(c_S\) as 
\begin{equation}
    c_{S} = c_F + {c}_r\,w_{\max}\frac{\sqrt{w_{\max}}}{\sqrt{w_{\min}}}\sqrt{n_p n_c}\,(\|\bm{u}_{x,*}\|_{\infty} + \|\bm{\tau}_{u,*}\|_{\infty}). 
\end{equation}

\paragraph{Riccati-type error inequality.}
Substituting the bounds \cref{eq:systems_split_form_trunc_bound}, \cref{eq:systems_split_form_eQR}, \cref{eq:systems_split_form_eQAe_bound}, \cref{eq:systems_split_form_phi_term1}, \cref{eq:systems_split_form_phi_term2}, and \cref{eq:systems_split_form_phi_term3} into \cref{eq:systems_split_form_error_energy0}, changing the 2-norm error terms into \(\H\)-norms, and applying \cref{eq:tau_H_scaling}, we obtain an inequality of the same form as \cref{eq:error_equation_Hnorm} obtained for the scalar case, \ie, 
\begin{equation*}\label{eq:systems_split_form_energy_ineq_H}
    \der[]{t}\|\bm{e}\|_{\H}
    \le
    (\alpha_1 + 2\alpha_2)c_{R}w_{\min}^{-3/2} \|\bm{e}\|_{\H}^2 + w_{\min}^{-1}(\alpha_1 c_{F} + \alpha_2 c_{S})\|\bm{e}\|_{\H} + \sqrt{|\Omega|}\|\bm{\tau}_{*}\|_{\infty},
\end{equation*}
which is in the Riccati ODE form as desired,
\begin{equation}\label{eq:systems_split_form_error_ineq_2}
    \der[]{t}\|\bm{e}\|_{\H}
    \le
    a\,\|\bm{e}\|_{\H}^{2}
    +b\,\|\bm{e}\|_{\H}
    +c,
\end{equation}
where, \(a\), \(b\), and \(c\) are defined as in \cref{prop:riccati_envelope} and satisfy the mesh scaling conditions in \cref{prop:constant_coefficient_error_equation}. 

We now state the convergence result for the split-form C-SBP discretization of the symmetric hyperbolic system \cref{eq:nonlinear_prob_split} with homogeneous fluxes. We note that entropy stability of the split-form discretization for systems was established in \cref{sec:split_form_stability}, which guarantees that \(\|\bm{u}_h(t)\|_{\H}\) remains bounded on \([0,T]\).

\begin{theorem}\label{thm:csbp_convergence_systems_split_form}
Consider the symmetric hyperbolic system \cref{eq:nonlinear_prob} on a periodic domain and assume:
\begin{itemize}
    \item the exact solution satisfies \(\fnb{U}\in C^1(\Omega\times[0,T])\),
    \item each flux \(\fnb{F}_i:\mathbb{R}^{n_c}\to\mathbb{R}^{n_c}\) belongs to \(C^2(\mathbb{R}^{n_c})\), is homogeneous of degree \(\beta\), so that \cref{eq:homogeneity} holds, and has globally bounded second derivatives,
    \item the C-SBP discretization \cref{eq:systems_split_form_disc_1d} (summed over \(i=1,\dots,d\)) is consistent of order \(p>1+d/2\) in the sense that \(\|\bm{\tau}_{*}(t)\|_\infty=\fn{O}(h^p)\) uniformly for \(t\in[0,T]\),
    \item the error at \(t=0\) satisfies \(\bm{e}(0)=\bm{0}\).
\end{itemize}
Then, for each fixed \(t\in[0,T]\), the numerical solution produced by the split-form C-SBP discretization, \cref{eq:split_form_disc}, converges under mesh refinement, \ie,
\[
    \lim_{h\to 0}\|\bm{u}(t)-\bm{u}_h(t)\|_{\H}=0.
\]
\end{theorem}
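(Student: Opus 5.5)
The plan is to reduce the systems case to the scalar argument already carried out in \cref{thm:csbp_convergence}. First, entropy stability of the split-form discretization, \cref{eq:energy_2_disc} and \cref{eq:solution_bound}, shows that \(\|\bm u_h(t)\|_{\H}\) is bounded on \([0,T]\). Hence \(z(t)\coloneqq\|\bm e(t)\|_{\H}\) is bounded and continuous. Next I collect the block-vector estimates \cref{eq:systems_split_form_trunc_bound}, \cref{eq:systems_split_form_eQR}, \cref{eq:systems_split_form_eQAe_bound}, and \cref{eq:systems_split_form_phi_term1}--\cref{eq:systems_split_form_phi_term3} into the energy identity \cref{eq:systems_split_form_error_energy0}. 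Converting \(2\)-norms to \(\H\)-norms via \(\|\bm e\|_2^2\le w_{\min}^{-1}\|\bm e\|_{\H}^2\) and using \cref{eq:tau_H_scaling} (summed over directions \(i=1,\dots,d\)) yields \cref{eq:systems_split_form_error_ineq_2}. This inequality holds classically where \(z>0\). Where \(z=0\), I would repeat the Dini-derivative argument verbatim: \(\bm u=\bm u_h\) forces \(\dd\bm e/\dd t=\bm\tau\), so \(D^+z\le\sqrt{|\Omega|}\|\bm\tau_*\|_\infty\le c\).

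The second step is to verify the mesh scalings of \cref{prop:constant_coefficient_error_equation} for the systems constants. For \(a\), \(c_R\) involves \(\|\overline{\Q}_*\|_2=\|\Q_*\|_2=\fn{O}(h^{d-1})\), since the Kronecker product with \(\I_{n_c}\) preserves the spectral norm, and \(c_r\) is a global Hessian bound. This gives \(a=\fn{O}(h^{-1-d/2})\). For \(b\), \(\|\widehat{\Q}_*\|_2\le n_c\|\Q_*\|_2\), because \(\mathbbm 1\) has norm \(n_c\). For \(\|\A^*_*\|_\infty=\fn{O}(h)\), I would apply the Lipschitz argument of \cref{lem:flux_lipschitz} to each entry of \(\A(\fnb U(\bm x))\). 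This uses \(\fnb F\in C^2\), \(\fnb U\in C^1\), compactness, and the fact that nodes in one element lie within distance \(\fn{O}(h)\) of each other. Thus \(w_{\min}^{-1}c_F=\fn{O}(1)\), and \(w_{\min}^{-1}c_S=\fn{O}(1)\) by quasi-uniformity (\cref{assump:mesh}). Taking the supremum over \(0<\tilde h\le h_1\) gives a constant \(b\ge0\), and \(c=\fn{O}(h^p)\) by the consistency hypothesis.

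With these facts in hand, \cref{prop:riccati_envelope} applies unchanged, since the comparison lemma is purely scalar. It gives \(z(t)\le y(t)\), where \(y\) solves \cref{eq:riccati_energy_error} with \(y(0)=0\) because \(\bm e(0)=\bm 0\). Since \(p>1+d/2\), \cref{lem:riccati_no_blowup_smallh} then provides \(h_0(T)\) such that \(y\) exists on \([0,T]\) for \(h<h_0\). The case analysis in the proof of \cref{thm:csbp_convergence} (Cases 1--4, using \(c\to0\), \(ac\to0\), and \(b\) constant) shows \(y(t)\to0\), and therefore \(\|\bm u(t)-\bm u_h(t)\|_{\H}\to0\).

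The main obstacle I expect is the Hadamard-structure bound \cref{eq:systems_split_form_eQAe_def}--\cref{eq:systems_split_form_eQAe_bound}. In the systems case, \(\A\) is a symmetric matrix rather than a scalar, so the commutator \(\Q_k\A-\A\Q_k\) has blocks \([\Q_k]_{ij}(\A_j-\A_i)\), which are no longer scalar multiples. My first attempt would bypass Hadamard submultiplicativity entirely. I would bound \(\|\widehat{\Q}_k\circ\A^*\|_2\) directly by its entrywise maximum times \(n_pn_c\), which is at most \(n_pn_c\max_{ij}|[\Q_k]_{ij}|\,\|\A^*\|_{\max}\). Since \(|[\Q_k]_{ij}|\le\|\Q_k\|_2\), this gives \(c_F\lesssim\|\Q_*\|_2\,\fn{O}(h)\) with only an \(n_p,n_c\)-dependent constant, and that is all the scaling argument requires. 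The boundary \(\E_k\) terms still cancel, because \(\E_k\otimes\I_{n_c}\) is diagonal at collocated nodes and symmetry of \(\A\) is used exactly as in the scalar case.
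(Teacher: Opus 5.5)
Your proposal is correct and follows the paper's route. The paper likewise assembles the systems-level bounds into the Riccati-type inequality \cref{eq:systems_split_form_error_ineq_2}, verifies the scalings of \cref{prop:constant_coefficient_error_equation}, and then declares the proof identical to that of \cref{thm:csbp_convergence}. Your only deviation is cosmetic: you bound \(\|\widehat{\Q}_k\circ\A^*\|_2\) entrywise (using \(|[\Q_k]_{ij}|\le\|\Q_k\|_2\)) instead of by Hadamard submultiplicativity, which changes only the \(n_p,n_c\)-dependent constant in \(c_F\) and not its \(\fn{O}(h^{d})\) scaling.
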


\begin{proof}
The proof is the same as that of \cref{thm:csbp_convergence}. 
\end{proof}

\begin{remark}
Different discretizations and discrete operators lead to different values of the Riccati coefficients \(a\), \(b\), and \(c\). These coefficients control the resulting a priori upper bound on the error and, through the associated Riccati ODE, the predicted growth of that bound in time. Consequently, studying how \(a\), \(b\), and \(c\) depend on the numerical fluxes, split forms, and SBP operators can help guide method design toward improved error-growth behavior. Furthermore, the coefficient \(b\) inherits an explicit dependence on the magnitude of discrete solution gradients (e.g., \cref{eq:systems_split_form_phi_term2}) and on changes in the flux Jacobian through \(c_F\), thereby linking problem-dependent nonlinear structure to the error-growth mechanism.
\end{remark}

\section{Conclusions}\label{sec:conclusions}
In this paper, we establish convergence of entropy-stable split-form C-SBP discretizations for scalar hyperbolic PDEs and symmetric hyperbolic systems with homogeneous fluxes on periodic domains, under smoothness assumptions on the exact solution and flux functions and global boundedness of the second derivatives of the fluxes. In contrast to analyses that rely on linearization, frozen-coefficient arguments, or projection-based decompositions, we work directly with the semi-discrete error equation. In particular, we show that the error norm satisfies a nonlinear differential inequality that is dominated by a constant-coefficient Riccati initial-value problem. For sufficiently small mesh spacing, and for degree-$p$ C-SBP discretizations in $d$ spatial dimensions with $p>1+d/2$, the Riccati envelope exists on any fixed finite time interval and tends to zero under mesh refinement, which implies convergence of the numerical solution. The resulting framework also clarifies how the coefficients in the error evolution inequality control nonlinear error growth.

Future work includes extending the analysis to discontinuous SBP discretizations with SATs (numerical fluxes), and to symmetrizable systems for which the relevant energy estimates involve state-dependent symmetrizers. It is also of interest to apply the present approach to entropy-stable discretizations that do not rely on homogeneity-based splittings, such as the Nordstr{\"o}m-split method \cite{nordstrom2022skew} and the residual-correction approach of Abgrall \cite{abgrall2018general}. Finally, it would be useful to characterize schemes through the coefficients appearing in the Riccati-type error evolution bound, both to understand long-time error behavior and to guide the design of more robust and efficient discretizations.

\begin{appendices}
\crefalias{section}{appendix}
\Crefname{appendix}{Appendix}{Appendices}
\crefname{appendix}{Appendix}{Appendices}

\renewcommand{\theHequation}{\thesection.\arabic{equation}}
\renewcommand{\theHfigure}{\thesection.\arabic{figure}}
\renewcommand{\theHtable}{\thesection.\arabic{table}}

\section{Flux expansion and remainder bound}\label{app:remainder_bound}
Let \(f:\mathbb{R}\to\mathbb{R}\) be \(C^2(\mathbb{R})\) with globally bounded second derivative, and assume that the numerical solution is bounded, \(|u_{h,i}|< \infty\). Denote the pointwise error by
\[
    e_i \coloneqq u_i - u_{h,i}.
\]
We expand \(f(u_{h,i})=f(u_i-e_i)\) about \(u_i\) and write the first-order Taylor polynomial plus a remainder
\[
    f(u_{h,i}) = f(u_{i} - e_{i}) = f(u_i) - f'(u_i)e_i + R_1(e_i),
\]
where \(f'(u_{i})=\frac{\partial{f}}{\partial{u}}(u_i)\). Hence, we have
\(
    f(u_i) - f(u_{h,i}) = f'(u_i)e_i - R_1(e_i),
\)
or in vector form,
\begin{equation}\label{eq:flux_taylor_expansion}
    f(\bm{u}) - f(\bm{u}_{h}) = \A(\bm{u})\bm{e} - R_1(\bm{e}),
\end{equation}
where \(\A(\bm{u}) \coloneqq \text{diag}(f'(\bm{u}))\).
The Taylor remainder satisfies the inequality 
\[
    |R_{1}(e_{i})| \le \frac{c_r}{2}|u_{i} - u_{h,i}|^2 = \frac{c_r}{2}|e_{i}|^2,
\]
where \(c_r \coloneqq \max_{i} |f''(u_{\xi,i})|\) and \(u_{\xi,i} \in [u_{i}, u_{h,i}]\) for each global node \(i\), i.e., \(c_r\) is greater than or equal to the magnitude of \(f''(u_{\xi,i})\) at any nodes in the domain. 

\paragraph{\texorpdfstring{Bounding $R_1(\bm{e})$ in $2$-norm.}{Bounding R1(e) in 2-norm.}}
Using the global error definition in \cref{eq:error_def_system}, we can write
\begin{align}\label{eq:remainder_2_norm_bound0}
    \|R_1(\bm{e})\|_{2}^2 &=\sum_{\Omega_k\in\fn{T}_h} \|R_1(\bm{e}_{k})\|_2^2
    = \sum_{\Omega_k\in\fn{T}_h}\sum_{i=1}^{n_{p}} |R_1(e_{k,i})|^2
    \nonumber
    \\
    &\le \frac{c_r^2}{4}\sum_{\Omega_k\in\fn{T}_h}\sum_{i=1}^{n_{p}} e_{k,i}^4
    \le \frac{c_r^2}{4}\sum_{\Omega_k\in\fn{T}_h}\Big(\sum_{i=1}^{n_{p}} e_{k,i}^2\Big)^2
    \nonumber
    \\
    & = \frac{c_r^2}{4}\sum_{\Omega_k\in\fn{T}_h}\|\bm{e}_k\|_{2}^4
    \le \frac{c_r^2}{4}\Big(\sum_{\Omega_k\in\fn{T}_h}\|\bm{e}_k\|_{2}^2\Big)^2 = \frac{c_r^2}{4}\|\bm{e}\|_2^4,
\end{align}
where we have used \cref{eq:sum_of_product} to arrive at the second and third inequalities. Taking the square root of both sides, we obtain
\begin{equation}\label{eq:remainder_2_norm_bound1}
    \|R_1(\bm{e})\|_{2} \le \frac{c_r}{2}\|\bm{e}\|_{2}^2.
\end{equation}

\section{Lipschitz continuity of the flux Jacobian}\label{app:lipschitz_flux}
\begin{lemma}\label{lem:flux_lipschitz}
    Let $\fn{U}(\cdot,t)\in C^1(\Omega)$ for each $t\in[0,T]$ with $\Omega\subset\mathbb{R}$ compact, and let $\fn{F}\in C^2(\mathbb{R})$.
    Assume the derivatives are uniformly bounded on $\Omega\times[0,T]$, in particular
    \[
        M_u := \sup_{x\in\Omega,\,t\in[0,T]} \left|\pder[\fn{U}]{x}(x,t)\right| <\infty,\qquad
        M_{f''} := \sup_{\fn{S}\in \fn{U}(\Omega\times[0,T])} \left|\pder[^{2}\fn{F}]{\fn{U}^{2}}(\fn{S})\right| <\infty.
    \]
    Then for any $x_i,x_j\in\Omega$ and any $t\in[0,T]$,
    \[
        \left|\pder[\fn{F}]{\fn{U}}(\fn{U}(x_i,t)) - \pder[\fn{F}]{\fn{U}}(\fn{U}(x_j,t))\right|
        \le C\,|x_i-x_j|,
    \]
    where one may take $C=M_{f''}\,M_u$. In particular, $C$ is independent of the mesh spacing $h=|x_i-x_j|$.
\end{lemma}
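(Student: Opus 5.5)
The plan is a composition of two one-dimensional mean value theorem arguments. Fix \(t\in[0,T]\) and \(x_i,x_j\in\Omega\), and write \(\fn{U}_i\coloneqq \fn{U}(x_i,t)\) and \(\fn{U}_j\coloneqq \fn{U}(x_j,t)\).

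\textbf{Step 1: the flux Jacobian.} Since \(\fn{F}\in C^2(\mathbb{R})\), the map \(\partial\fn{F}/\partial\fn{U}\) is \(C^1\). The mean value theorem therefore gives
\[
\pder[\fn{F}]{\fn{U}}(\fn{U}_i)-\pder[\fn{F}]{\fn{U}}(\fn{U}_j)=\pder[^2\fn{F}]{\fn{U}^2}(\fn{S})\,(\fn{U}_i-\fn{U}_j)
\]
for some \(\fn{S}\) lying between \(\fn{U}_i\) and \(\fn{U}_j\).

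\textbf{Step 2: the solution.} Since \(\fn{U}(\cdot,t)\in C^1(\Omega)\), the mean value theorem applied in \(x\) gives \(|\fn{U}_i-\fn{U}_j|\le M_u|x_i-x_j|\). Multiplying the two estimates then yields \(C=M_{f''}M_u\), provided \(|\fn{F}''(\fn{S})|\le M_{f''}\).

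\textbf{Main obstacle.} The delicate point is justifying both mean value arguments, since each needs something to be connected.

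\emph{Step 2 requires the segment \([x_i,x_j]\) to lie in \(\Omega\).} For a compact connected \(\Omega\subset\mathbb{R}\), which is an interval, this holds automatically. In the periodic setting of the paper, the relevant nodes belong to a common element, so the segment between them lies in that element. Alternatively, one can measure \(|x_i-x_j|\) along the periodic torus, where the same argument applies.

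\emph{Step 1 requires \(\fn{S}\in\fn{U}(\Omega\times[0,T])\).} I will use the fact that \(\fn{U}(\cdot,t)\) is continuous on the interval between \(x_i\) and \(x_j\), which is a connected set. Its image is therefore an interval containing both \(\fn{U}_i\) and \(\fn{U}_j\), and hence every value between them. In particular, \(\fn{S}\) is attained as \(\fn{U}(x^\ast,t)\) for some \(x^\ast\) between \(x_i\) and \(x_j\), so the supremum defining \(M_{f''}\) applies to \(\fn{S}\). Should connectivity fail, for example across the periodic identification, I would instead bound \(|\fn{F}''|\) by its global bound on \(\mathbb{R}\). That global bound is assumed elsewhere in the paper, and using it gives the same conclusion with a slightly larger constant.

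\textbf{Uniformity.} The constant \(C\) does not depend on \(t\), on the nodes, or on \(h\), because \(M_u\) and \(M_{f''}\) are suprema over all of \(\Omega\times[0,T]\). For nodes in a single element, \(|x_i-x_j|=\fn{O}(h)\), which recovers the estimate \(|\A^*|_\infty\le Ch\) used in \cref{prop:constant_coefficient_error_equation}.
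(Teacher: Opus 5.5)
Your proof is correct and is essentially the paper's argument. The paper applies the mean value theorem once to the composition $x\mapsto \fn{F}'(\fn{U}(x,t))$, whose derivative by the chain rule is $\fn{F}''(\fn{U}(\xi,t))\,\partial_x\fn{U}(\xi,t)$, and then bounds the two factors by $M_{f''}$ and $M_u$. This single composite step places the intermediate state at $\fn{U}(\xi,t)\in\fn{U}(\Omega\times[0,T])$ automatically, so your separate intermediate-value argument for $\fn{S}$ is valid but not needed.
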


\begin{proof}
    By the mean-value theorem applied to the composition \(x\mapsto \pder[\fn{F}]{\fn{U}}(\fn{U}(x,t))\), there exists \(\xi\) on the segment between \(x_i\) and \(x_j\) such that
    \[
        \pder[\fn{F}]{\fn{U}}(\fn{U}(x_i,t)) - \pder[\fn{F}]{\fn{U}}(\fn{U}(x_j,t)) = \left[\pder[^{2}\fn{F}]{\fn{U}^{2}}\big(\fn{U}(\xi,t)\big)\,\pder[\fn{U}]{x}(\xi,t)\right]\,(x_i-x_j).
    \]
    Taking absolute values and using the uniform bounds gives
    \[
        \left|\pder[\fn{F}]{\fn{U}}(\fn{U}(x_i,t)) - \pder[\fn{F}]{\fn{U}}(\fn{U}(x_j,t))\right|
        \le M_{f''}\,M_u\,|x_i-x_j| = C\,|x_i-x_j|,
    \]
    which proves the result. 
\end{proof}
Lemma~\ref{lem:flux_lipschitz} is stated for $\Omega\subset\mathbb{R}$.
In $\Omega\subset\mathbb{R}^d$, the same estimate holds with
$\sup\|\nabla \fn{U}\|$ in place of $M_u$ and $\|\bm{x}_i-\bm{x}_j\|$ in place of $|x_i-x_j|$.

\section{Sum of products lemmas}
For convenience, we state without proof two standard lemmas used repeatedly in the main text.
\begin{lemma}\label{lem:sum_of_product}
    Let \(a_{i}\ge 0,b_{i}\ge 0\) for \(i \in \{1,\dots,n\}\). Then the sum of their products is bounded by the product of their sums, \ie,
    \begin{equation}\label{eq:sum_of_product}
        \sum_{i}(a_{i}b_{i}) \le \Big(\sum_{i} a_{i}\Big) \Big(\sum_{i} b_{i}\Big).
    \end{equation}
\end{lemma}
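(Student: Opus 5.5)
The inequality is elementary, so I would prove it by direct expansion rather than by appealing to any earlier result. Since \(a_i\ge 0\) and \(b_i\ge 0\), I would multiply out the right-hand side as a double sum:
\[
    \Big(\sum_{i} a_{i}\Big)\Big(\sum_{j} b_{j}\Big) = \sum_{i}\sum_{j} a_{i}b_{j} = \sum_{i} a_{i}b_{i} + \sum_{i\neq j} a_{i}b_{j}.
\]
The diagonal terms \(i=j\) reproduce exactly the left-hand side of \cref{eq:sum_of_product}. Every off-diagonal term \(a_i b_j\) with \(i\neq j\) is a product of two nonnegative numbers, so it is nonnegative. Dropping the off-diagonal sum can therefore only decrease the total, which gives the claimed inequality.

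The only point that needs care is recording where nonnegativity is used. The expansion itself is a pure identity. The sign hypothesis enters solely when the off-diagonal part is discarded. This is exactly why, in the main text, the lemma is applied to squared quantities such as \(a_i=b_i=e_{k,i}^2\), which yields \(\sum_i e_{k,i}^4\le(\sum_i e_{k,i}^2)^2\), and to squared elementwise norms \(\|\bm e_k\|_2^2\). All of these are automatically nonnegative.

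I expect no genuine obstacle here. At most I would note two degenerate cases. For \(n=1\) the statement is an equality. If all \(b_i=0\), both sides vanish. Neither case requires separate treatment, since the double-sum argument covers them.
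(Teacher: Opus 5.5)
Your argument is correct and complete: expanding \(\big(\sum_i a_i\big)\big(\sum_j b_j\big)\) as a double sum and discarding the nonnegative off-diagonal terms \(a_i b_j\) with \(i\neq j\) gives the inequality directly, and nonnegativity is used only in that discarding step. The paper itself states this lemma without proof as a standard fact, so there is no argument in the paper to compare against; yours is the natural proof to supply.
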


\begin{lemma}\label{lem:sum_of_H_norm_products}
    The sum of products of \(\H\)-norms of vectors is bounded by the products of the \(\H\)-norms of the individual vectors, \ie,
    \begin{equation}\label{eq:sum_of_H_norm_products}
        \sum_{\Omega_{k}\in \fn{T}_{h}} \|\bm{e}_k\|_{\H}\|\bm{\tau}_k\|_{\H} \le \Big(\sum_{\Omega_{k}\in \fn{T}_{h}} \|\bm{e}_k\|_{\H}^2\Big)^{1/2} \Big(\sum_{\Omega_{k}\in \fn{T}_{h}} \|\bm{\tau}_k\|_{\H}^2\Big)^{1/2} = \|\bm{e}\|_{\H}\|\bm{\tau}\|_{\H}.
    \end{equation}
\end{lemma}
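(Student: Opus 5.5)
The plan is to reduce the statement to the ordinary Cauchy--Schwarz inequality in \(\mathbb{R}^{n_e}\), applied to the vectors of elementwise norms. For each \(\Omega_k\in\fn{T}_h\), set
\[
    \alpha_k \coloneqq \|\bm{e}_k\|_{\H}\ge 0,\qquad \beta_k \coloneqq \|\bm{\tau}_k\|_{\H}\ge 0.
\]
These are well defined and nonnegative because each \(\H_k\) is SPD, so \(\|\cdot\|_{\H}\) in \cref{eq:norm_def} is a genuine norm on element \(\Omega_k\). Collect them into \(\bm{\alpha}=(\alpha_1,\dots,\alpha_{n_e})\) and \(\bm{\beta}=(\beta_1,\dots,\beta_{n_e})\). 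The left-hand side of \cref{eq:sum_of_H_norm_products} is then exactly the Euclidean inner product \(\bm{\alpha}^T\bm{\beta}\).

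The discrete Cauchy--Schwarz inequality gives
\[
    \bm{\alpha}^T\bm{\beta}\le \|\bm{\alpha}\|_2\|\bm{\beta}\|_2 .
\]
If one wants a self-contained argument, I would prove it by expanding \(\sum_{k}(\lambda\alpha_k-\beta_k)^2\ge 0\) as a quadratic in \(\lambda\) and requiring a nonpositive discriminant. Alternatively, one can apply \(\alpha_k\beta_k\le \tfrac12(\alpha_k^2/s+s\,\beta_k^2)\) with the optimal choice \(s=\|\bm{\alpha}\|_2/\|\bm{\beta}\|_2\), treating the degenerate case \(\bm{\beta}=\bm 0\) separately. Writing out the two factors yields
\[
    \Big(\sum_{\Omega_k\in\fn{T}_h}\|\bm{e}_k\|_{\H}^2\Big)^{1/2}\Big(\sum_{\Omega_k\in\fn{T}_h}\|\bm{\tau}_k\|_{\H}^2\Big)^{1/2},
\]
which is the middle expression in \cref{eq:sum_of_H_norm_products}.

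The final equality then follows directly from the definitions of the global norms in \cref{eq:global_error_norms}, or from \cref{eq:error_def_system} in the systems case. There is essentially no obstacle in this argument. The only point needing care is consistency with those definitions: the global quantities \(\|\bm{e}\|_{\H}^2\) and \(\|\bm{\tau}\|_{\H}^2\) are \emph{defined} as elementwise sums. The equality therefore holds by definition and does not rely on the assembled global \(\H\), so shared interface nodes raise no double-counting issue.
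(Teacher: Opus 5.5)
Your proof is correct and matches the paper's approach. The paper states this lemma in the appendix as a standard fact without proof, and the intended justification is exactly what you give: the Cauchy--Schwarz inequality in \(\mathbb{R}^{n_e}\) applied to the vectors of elementwise norms \((\|\bm{e}_k\|_{\H})_k\) and \((\|\bm{\tau}_k\|_{\H})_k\), with the final equality coming from the definition \cref{eq:global_error_norms}. Your consistency remark can even be strengthened: since \(\H=\sum_{k}\P_k^T\H_k\P_k\) and \(\bm{e}_k=\P_k\bm{e}\), one has \(\bm{e}^T\H\bm{e}=\sum_k\bm{e}_k^T\H_k\bm{e}_k\), so the elementwise-sum definition also coincides with the assembled global norm in \cref{eq:norm_def}.
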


\section{Solution of the constant-coefficient Riccati ODE}\label{app:riccati}
For completeness, we derive the explicit solution of the constant-coefficient Riccati ODE. Consider the scalar initial-value problem
\begin{equation}\label{eq:riccati}
    \pder[y(t)]{t} = y'(t) =  a\,y(t)^2 + b\,y(t) + c,\qquad y(0)=0, \qquad a \ge 0, \quad b\ge 0, \quad c > 0,
\end{equation}
and denote the discriminant
\(
    \Delta := b^2 - 4 a c.
\)
We do not consider the \(c=0\) case because, as discussed in the proof of \cref{lem:riccati_no_blowup_smallh}, the solution to the resulting ODE is the trivial solution, \(y=0\). In the following, we derive explicit formulas for \(y(t)\) by integrating the separable ODE on the finite interval \([0,t]\). We first treat the degenerate cases in which the quadratic term is absent, before turning to the general case \(a>0\).

\subsection{Degenerate cases}

\paragraph{Case 1. \(a=0\) and \(b=0\).}
The equation reduces to \(y'(t)=c\), \(y(0)=0\). Integrating directly gives
\begin{equation}\label{eq:case1_solution}
    y(t)=ct.
\end{equation}

\paragraph{Case 2. \(a=0\) and \(b>0\).}
The equation becomes \(y'(t)=by(t)+c\), \(y(0)=0\). Separating variables yields
\(
    \dd t={\dd y}/({by+c}).
\)
Integrating from \(0\) to \(t\) gives
\(
    t=\frac{1}{b}\ln \bigl(\frac{by(t)+c}{c}\bigr),
\)
which implies
\begin{equation}\label{eq:case2_solution}
    y(t)=\frac{c}{b}\bigl(e^{bt}-1\bigr).
\end{equation}

\paragraph{Case 3. \(a>0\), \(b=0\), and \(c>0\).}
The equation reduces to
\(
    y'(t)=ay(t)^2+c,
\)
\(y(0)=0\). Separating variables gives
\(
    \dd t={\dd y}/({ay^2+c}).
\)
Integrating from \(0\) to \(t\) yields
\(
    t=\frac{1}{\sqrt{ac}}\tan^{-1} \bigl(\sqrt{\frac{a}{c}}\,y(t)\bigr),
\)
and hence
\begin{equation}\label{eq:case3_solution}
    y(t)=\sqrt{\frac{c}{a}}\tan\!\left(\sqrt{ac}\,t\right).
\end{equation}

\subsection{General case, \(a>0\)}
We first note a few properties:
\begin{itemize}
    \item We assume \(y(t)\ge0\) since we are interested in the case where \(y(t)\ge\|\bm{e}(t)\|_{\H}\).
    \item Since \(a\neq 0\) the quadratic polynomial factorizes as
          \(
              a y^2 + b y + c = a (y-r_1)(y-r_2),
          \)
          and without loss of generality, we assume \(r_{1} = ({-b + \sqrt{\Delta}})/({2a})\), \(r_{2}= ({-b - \sqrt{\Delta}})/({2a})\), and thus \( r_1-r_2 = {\sqrt{\Delta}}/{a}\) which implies \(a(r_1-r_2)=\sqrt{\Delta}\).
    \item There are three cases: when \(\Delta>0\) the roots are real and distinct; when \(\Delta=0\) the root is double; when \(\Delta<0\) they are complex conjugates.
\end{itemize}

\paragraph{Case 1.} \(\Delta>0\) (two distinct real roots). Write the ODE as \(y'(t) = a(y-r_1)(y-r_2)\) and integrate from \(0\) to \(t\):
\begin{align*}
    t
     & = \int_{0}^{t} 1\,\dd s
    = \int_{0}^{t} \frac{y'(s)}{a(y(s)-r_1)(y(s)-r_2)}\,\dd s
    = \int_{y(0)}^{y(t)} \frac{\dd y}{a(y-r_1)(y-r_2)}.
\end{align*}
We apply a partial-fraction decomposition to write
\[
    \frac{1}{a(y-r_1)(y-r_2)}
    = \frac{1}{a(r_1-r_2)}\Big(\frac{1}{y-r_1}-\frac{1}{y-r_2}\Big).
\]
Integrating and using \(y(0)=0\), we obtain
\begin{align*}
    t
     & = \frac{1}{a(r_1-r_2)}\int_{0}^{y(t)}\Big(\frac{1}{y-r_1}-\frac{1}{y-r_2}\Big)\dd y 
     \\  
     & = \frac{1}{a(r_1-r_2)}\Big(\ln|y(t)-r_1| - \ln|y(t)-r_2|
    - \ln|-r_1| + \ln|-r_2|\Big)                                                              
    \\
     & = \frac{1}{a(r_1-r_2)}\ln\!\Bigg(\frac{y(t)-r_1}{y(t)-r_2}\;\frac{-r_2}{-r_1}\Bigg)
    = \frac{1}{\sqrt{\Delta}}\ln\!\Bigg(\frac{y(t)-r_1}{y(t)-r_2}\;\frac{r_2}{r_1}\Bigg),
\end{align*}
where we used \(a(r_1-r_2)=\sqrt{\Delta}\) and, since \(a>0\), \(b\ge 0\), and \(c>0\) imply \(r_1<0\) and \(r_2<0\) for \(\Delta>0\), all logarithm arguments are positive for \(y(t)\ge 0\) and the absolute values may be dropped. Exponentiating both sides,
multiplying through by \(r_1/r_2\), and rearranging, we find
\[
    \frac{y(t)-r_1}{y(t)-r_2} = \frac{r_1}{r_2}\,e^{\sqrt{\Delta}\,t}
    \quad\Longrightarrow\quad
    y(t)\Big(1-\frac{r_1}{r_2}e^{\sqrt{\Delta}\,t}\Big)
    = r_1\big(1-e^{\sqrt{\Delta}\,t}\big).
\]
Thus, provided the denominator is nonzero, we have
\begin{equation}\label{eq:solution_Dpos}
    y(t)=\dfrac{r_1\big(1-e^{\sqrt{\Delta}\,t}\big)}{\,1-(r_1/r_2)e^{\sqrt{\Delta}\,t}\,}.
\end{equation}

\paragraph{Case 2.} \(\Delta=0\) (double root). If \(\Delta=0\), then \(c=b^2/(4a)\) and \(r_1=r_2 = r=-b/(2a)\), thus the ODE becomes
\(
    y' = a(y-r)^2.
\)
Integrating from \(0\) to \(t\), we have
\[
    t = \int_{0}^{y(t)}\frac{\dd y}{a(y-r)^2}
    = \Big[-\frac{1}{a(y-r)}\Big]_{0}^{y(t)}
    = -\frac{1}{a(y(t)-r)} + \frac{1}{a(0-r)}
    = -\frac{1}{a(y(t)-r)} - \frac{1}{ar}.
\]
Rearranging, we find
\begin{equation}\label{eq:solution_Dzero}
    y(t)= r - \frac{1}{a t + 1/r}.
\end{equation}

\paragraph{Case 3.} \(\Delta<0\) (complex conjugate roots). Let \(\omega := \sqrt{4ac-b^2} > 0\) and define
\[
    \alpha := -\frac{b}{2a},\qquad \beta := \frac{\omega}{2a},
    \qquad a y^2 + b y + c = a\big((y-\alpha)^2 + \beta^2\big).
\]
Then the ODE can be written as \(y' = a\big((y-\alpha)^2 + \beta^2\big)\). Integrating from \(0\) to \(t\), we obtain
\begin{align*}
    t
     & = \int_{0}^{y(t)} \frac{\dd y}{a\big((y-\alpha)^2+\beta^2\big)}
    = \frac{1}{a\beta}\left[\tan^{-1}\left(\frac{y-\alpha}{\beta}\right)\right]_{0}^{y(t)}                                            
    \\
     & = \frac{1}{a\beta}\left(\tan^{-1}\left(\frac{y(t)-\alpha}{\beta}\right) - \tan^{-1}\left(\frac{-\alpha}{\beta}\right)\right).
\end{align*}
Since \(a\beta=\omega/2\), multiplying through and rearranging gives
\[
    \tan^{-1}\left(\frac{y(t)-\alpha}{\beta}\right) = \frac{\omega}{2}t + \tan^{-1}\left(\frac{b}{\omega}\right),
\]
where we have used the fact that \(\tan^{-1}(x)\) is an odd function. Applying the tangent function on both sides, we have
\[
    \frac{y(t)-\alpha}{\beta}=\tan\left(\frac{\omega}{2}t + \tan^{-1}\left(\frac{b}{\omega}\right)\right),
\]
which gives the real-valued explicit solution as
\begin{align}
    y(t) & =\alpha + \beta \tan\left(\frac{\omega}{2}t + \tan^{-1}\left(\frac{b}{\omega}\right)\right) =\frac{\omega \tan\big(\frac{\omega}{2}t+\tan^{-1}(b/\omega)\big)-b}{2a},
    \nonumber
\end{align}
which, in terms of \(\Delta\) is
\begin{align}
    y(t) & =\frac{\sqrt{-\Delta} \tan\big(\frac{\sqrt{-\Delta}}{2}t+\tan^{-1}(b/\sqrt{-\Delta})\big)-b}{2a}.\label{eq:solution_Dneg}
\end{align}
\section{Blow-up time for the Riccati ODE: Proof of \texorpdfstring{\cref{prop:csbp_error_estimate_divergence_form}}{Proposition~\ref{prop:csbp_error_estimate_divergence_form}}}\label{sec:riccati_blow_up_time}
\begin{proof}
    The proof uses the error equation, \cref{eq:riccati_energy_error}, for the entropy-stable C-SBP discretization, which is reproduced here as
    \[
        \der[y]{t} = y' = ay^{2} + by + c, \qquad y(0) = 0.
    \]
    Recall that \(a\ge0\), \(b\ge0\), and \(c>0\) are constant coefficients. Furthermore, if \(c=0\), then the solution to the ODE is \(y=0\), and thus the blow-up time is \(t_{*}=\infty\). We define \(\Delta \coloneqq b^2 - 4ac\), and consider six cases.
    
    \paragraph{Case 1.} \(a=0\) and \(b=0\). The solution for this case is given by \cref{eq:case1_solution}, 
    \begin{equation}
        y(T)=c T,
    \end{equation}
    \ie, at most, the error grows linearly in time.
    
    \paragraph{Case 2.} \(a=0\) and \(b > 0\). In this case the solution is given by \cref{eq:case2_solution},
    \begin{equation}
        y(T) = \dfrac{c}{b}\big(e^{b T}-1\big).
    \end{equation}
    
    \paragraph{Case 3.} \(a>0\), \(b=0\), and \(c>0\).
    In this case, we have the solution given by \cref{eq:case3_solution},
    \(
        y(T) = \sqrt{\frac{c}{a}} \, \tan\left(\sqrt{ac}\,T\right).
    \)
    The tangent becomes unbounded when its argument is \(\frac{\pi}{2} + k\pi\); thus, the earliest blow-up occurs when
    \(
        \sqrt{ac}\,t_* = {\pi}/{2},
    \)
    that is,
    \begin{equation}\label{eq:case3_blowup}
        t_* = \frac{\pi}{2\sqrt{ac}}.
    \end{equation}
    
    \paragraph{Case 4.} \(a > 0\) and \(b > 0\), and \(\Delta=b^2 - 4ac > 0\). In this case, the error equation, \cref{eq:riccati_energy_error}, coincides with the constant-coefficient Riccati equation, the solutions of which are given in \cref{app:riccati}. The solution at \(t=T\) is given by \cref{eq:solution_Dpos},
    \begin{equation}\label{eq:case4_solution_Dpos1}
        y(T)=\dfrac{r_1\big(1-e^{\sqrt{\Delta}\,T}\big)}{\,1-(r_1/r_2)e^{\sqrt{\Delta}T}},
    \end{equation}
    where \(r_{1} = ({-b + \sqrt{\Delta}})/({2a})\) and \(r_{2}=({-b - \sqrt{\Delta}})/({2a})\).
    Now, the explicit solution \eqref{eq:case4_solution_Dpos1} is well-defined on \([0,T]\) if and only if
    \(
        1-({r_1}/{r_2})e^{\sqrt{\Delta}\,t}\neq 0
    \)
    for all \(t\in[0,T]\).
    If the denominator vanishes at some \(t_*>0\) then the solution blows up at \(t_*\).  The earliest blow-up time is, therefore,
    \begin{equation}\label{eq:case4_blow_up}
        t_*=\frac{1}{\sqrt{\Delta}}\ln \left(\frac{r_2}{r_1}\right) = \frac{1}{\sqrt{\Delta}}\ln\left(\frac{-b - \sqrt{\Delta}}{-b + \sqrt{\Delta}}\right).
    \end{equation}
    Note that \(\sqrt{\Delta} = \sqrt{b^2-4ac} < \sqrt{b^2} = b\); hence, we always have \(r_{2}/r_{1} > 1\) and thus \(t_*>0\).
    
    \paragraph{Case 5.} \(a > 0\), \(b > 0\),  and \(\Delta=b^2 - 4ac = 0\). In this case, the solution to the Riccati equation is given by \cref{eq:solution_Dzero},
    \begin{equation}\label{eq:case5_solution_Dzero1}
        y(T)= r - \frac{1}{a T + 1/r} = \frac{b}{2a} \left(\frac{2}{2-bT} - 1\right),
    \end{equation}
    where \(r=-\frac{b}{2a}\).
    Existence of the Riccati solution on \([0,T]\) requires \(a t + 1/r\neq 0\) for all \(t\in[0,T]\). This condition gives the solution blow-up time as
    \begin{equation}\label{eq:case5_blow_up}
        t_{*} = -\frac{1}{ar} = \frac{2}{b}.
    \end{equation}
    
    \paragraph{Case 6.} \(a > 0\), \(b > 0\),  and \(\Delta=b^2 - 4ac < 0\). In this case, the solution to the Riccati equation is given by \cref{eq:solution_Dneg},
    \begin{equation}\label{eq:case6_solution_Dneg1}
        y(T) =\frac{\sqrt{-\Delta} \tan\big(\frac{\sqrt{-\Delta}}{2}T+\tan^{-1}(b/\sqrt{-\Delta})\big)-b}{2a}.
    \end{equation}
    Existence of the Riccati solution on \([0,T]\) requires that the argument of the tangent function does not hit a pole \(\frac{\pi}{2} + k\pi\) for any \(t\in[0,T]\). Hence, the smallest blow-up time in this case satisfies \(({\sqrt{-\Delta}}/{2}) t_{*}+\tan^{-1}(b/\sqrt{-\Delta}) = {\pi}/{2}\), which gives the blow-up time as
    \begin{equation}\label{eq:case6_blow_up}
        t_{*} = \frac{\pi - 2\tan^{-1}(b/\sqrt{-\Delta})}{\sqrt{-\Delta}}.
    \end{equation}
    Note that \(\tan^{-1}(b/\sqrt{-\Delta})\le \frac{\pi}{2}\); hence, \(t_{*}\) is positive.
\end{proof}

\section{SBP operator mesh scaling}\label{app:sbp_operators}
On the reference element \(\hat\Omega\), an SBP operator satisfies, for each coordinate \(\xi_i\),
\[
    \D_{\xi_i}=\H^{-1}\Q_{\xi_i},\qquad
    \Q_{\xi_i}=\mathsf{S}_{\xi_i}+\frac{1}{2}\E_{\xi_i},
\]
with a symmetric positive definite \(\H\) matrix, \(\mathsf{S}_{\xi_i}=-\mathsf{S}_{\xi_i}^T\), and \(\Q_{\xi_i}+\Q_{\xi_i}^T=\E_{\xi_i}\). We assume that \(\H\) is diagonal. To study the scaling of SBP matrices on a given physical element, \(\Omega_{k}\), we make the following shape-regularity assumption. 
\begin{assumption}\label{assump:shape_regular_mapping}
For each element \(\Omega_k\), let \(h_k := \operatorname{diam}(\Omega_k)\), and let
\(\bm{x}_k : \hat\Omega \to \Omega_k\) be a smooth invertible mapping with Jacobian
\[
    \J_k(\bm{\xi}) := \frac{\partial \bm{x}_k}{\partial \bm{\xi}}(\bm{\xi}).
\]
Assume the family of mappings is shape regular in the sense that there exist constants
\(c_J, C_J > 0\), independent of \(k\) and \(h\), such that for all
\(\bm{\xi}\in\hat\Omega\),
\[
    c_J h_k \le \sigma_{\min}\!\bigl(\J_k(\bm{\xi})\bigr)
    \le \sigma_{\max}\!\bigl(\J_k(\bm{\xi})\bigr)
    \le C_J h_k.
\]
Equivalently,
\(
    \|\J_k(\bm{\xi})\|_2 \le C_J h_k
\)
and
\(
    \|\J_k(\bm{\xi})^{-1}\|_2 \le c_J^{-1} h_k^{-1}
\)
for all \(\bm{\xi}\in\hat\Omega\).
\end{assumption}

Using the mapping in \cref{assump:shape_regular_mapping}, and denoting the outward pointing normal vector on facet \(\gamma\in\hat{\Gamma}\) of \(\hat{\Omega}\) by \(n_{\xi_j\gamma}\) and the determinant of the metric Jacobian at node \(m\) by \(|\J_k^{(m)}|\), the SBP matrices on the physical elements are constructed as \cite{crean2018entropy}
\begin{align*}
    [\mathsf{S}_{x_i k}]_{mn}
                & = \frac{1}{2}\sum_{j=1}^d \Bigl(
    |\J_k^{(m)}|\frac{\partial \xi_j}{\partial x_i}(\bm{\xi}^{(m)})[\Q_{\xi_j}]_{mn}
    - |\J_k^{(n)}|\frac{\partial \xi_j}{\partial x_i}(\bm{\xi}^{(n)})[\Q_{\xi_j}]_{nm}\Bigr),
   \\
    [\H_k]_{mm} & = [\H]_{mm}\,|\J_k^{(m)}|, \qquad 
    [\N_{x_i\gamma k}]_{mm}
        = |\J_k^{(m)}|\sum_{j=1}^d n_{\xi_j\gamma}(\bm{\xi}^{(m)})
        \frac{\partial \xi_j}{\partial x_i}(\bm{\xi}^{(m)}),
    \\
    \E_{x_i k}  &= \sum_{\gamma\in\Gamma_k} \R_{\gamma k}^T\,\B_\gamma\,\N_{x_i\gamma k}\,\R_{\gamma k},
    \qquad
    \Q_{x_i k}  = \mathsf{S}_{x_i k}+\frac{1}{2}\E_{x_i k},
    \qquad
    \D_{x_i k}  = \H_k^{-1}\Q_{x_i k}.
\end{align*}
\begin{lemma}\label{lem:sbp_scaling}
Let \cref{assump:shape_regular_mapping,assump:mesh} hold. Then, for each element \(\Omega_k\), each facet \(\gamma\in\Gamma_k\), and each node \(m\) for nodewise quantities, we have
\begin{align*}
    |\J_k^{(m)}|           & = \fn{O}(h^d),
                           &                & \|\H_k\| = \fn{O}(h^d),
    \\
    \|\N_{x_i\gamma k}\|   & = \fn{O}(h^{d-1}),
                           &                & \|\mathsf{S}_{x_i k}\| = \fn{O}(h^{d-1}),
                           &                & \|\E_{x_i k}\| = \fn{O}(h^{d-1}),
                           &                & \|\Q_{x_i k}\| = \fn{O}(h^{d-1}),
    \\
    \|\D_{x_i k}\|         & = \fn{O}(h^{-1}),
\end{align*}
where \(\|\cdot\|\) denotes any induced norm.
\end{lemma}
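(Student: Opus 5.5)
The plan is to first establish the pointwise scalings of the geometric factors from \cref{assump:shape_regular_mapping}, then propagate them through the formulas defining \(\H_k\), \(\N_{x_i\gamma k}\), \(\mathsf{S}_{x_i k}\), \(\E_{x_i k}\), \(\Q_{x_i k}\), and finally \(\D_{x_i k}\). Throughout, the reference matrices \(\H\), \(\Q_{\xi_j}\), \(\R_{\gamma}\), \(\B_\gamma\) and the reference normals \(n_{\xi_j\gamma}\) are fixed and independent of \(h\). Since all matrices have fixed size \(n_p\times n_p\) (or \(n_f\times n_p\)), every induced norm is equivalent to the max-entry norm with \(h\)-independent constants. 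It therefore suffices to bound entries.

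The first step is the Jacobian. Its determinant is the product of the singular values of \(\J_k\), so \(c_J^d h_k^d\le |\J_k^{(m)}|\le C_J^d h_k^d\). To replace \(h_k\) by the nominal \(h\), I will use \cref{assump:mesh}. Since \(w_{k,m}=[\H]_{mm}|\J_k^{(m)}|\) with \([\H]_{mm}>0\) fixed, the bound \(c h^d\le w_{k,m}\le Ch^d\) gives \(|\J_k^{(m)}|=\fn{O}(h^d)\), and also \(h_k\sim h\). The bound \(\|\H_k\|=\fn{O}(h^d)\) then follows at once because \(\H_k\) is diagonal.

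For the metric terms, \(\partial\xi_j/\partial x_i\) are entries of \(\J_k^{-1}\), so \(|\partial\xi_j/\partial x_i|\le\|\J_k^{-1}\|_2\le c_J^{-1}h_k^{-1}=\fn{O}(h^{-1})\). Each entry of \(\N_{x_i\gamma k}\) and of \(\mathsf{S}_{x_i k}\) is therefore a finite sum (\(d\) terms) of \(\fn{O}(h^d)\cdot\fn{O}(h^{-1})\cdot\fn{O}(1)\), which is \(\fn{O}(h^{d-1})\). The matrix \(\E_{x_i k}\) is a finite sum over facets of \(\R^T\B\N\R\), where \(\R_{\gamma k}\) and \(\B_\gamma\) are the fixed reference extrapolation and quadrature matrices. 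Hence \(\|\E_{x_i k}\|=\fn{O}(h^{d-1})\), and so \(\Q_{x_i k}=\mathsf{S}_{x_ik}+\tfrac12\E_{x_ik}=\fn{O}(h^{d-1})\).

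The main obstacle is \(\D_{x_i k}\), since it needs a lower bound on \(\H_k\) rather than an upper bound. Here \(\|\H_k^{-1}\|=1/\min_m w_{k,m}\le c^{-1}h^{-d}\) by \cref{assump:mesh}, so submultiplicativity gives \(\|\D_{x_ik}\|\le\|\H_k^{-1}\|\|\Q_{x_ik}\|=\fn{O}(h^{-d})\fn{O}(h^{d-1})=\fn{O}(h^{-1})\). The other point needing care is the constant relating \(h_k\) and \(h\); I will justify it explicitly via the two-sided determinant bound combined with \cref{assump:mesh}, as described in the first step.
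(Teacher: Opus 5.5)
Your proposal is correct and takes essentially the same approach as the paper. You get entrywise bounds from \(|\J_k^{(m)}|=\fn{O}(h_k^d)\) and \(|\partial\xi_j/\partial x_i|\le\|\J_k^{-1}\|_2=\fn{O}(h_k^{-1})\), propagate them through \(\N\), \(\mathsf{S}\), \(\E\), \(\Q\) and \(\D=\H_k^{-1}\Q\), and obtain \(h_k\sim h\) by combining the two-sided determinant bound with \cref{assump:mesh}. The differences are cosmetic: you bound \(\|\H_k^{-1}\|\) directly from \(w_{k,m}\ge c\,h^d\) and convert \(h_k\) to \(h\) at the start, whereas the paper uses the singular-value lower bound to get \(\fn{O}(h_k^{-d})\) and converts to \(h\) only at the end.
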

\begin{proof}
    Since the spatial dimension, \(d\), the number of volume nodes, \(n_p\), and the number of facet nodes, \(n_f\) are fixed, any induced matrix norm is equivalent to \(\|\cdot\|_2\); therefore, the scaling estimates below are equivalent, up to a constant, for any induced norm. Furthermore, if each entry of a local element or facet matrix is \(\fn{O}(h^\alpha)\), then its matrix norm is also \(\fn{O}(h^\alpha)\). 
    \textbf{Scaling of \(|\J_k^{(m)}|\).}
    By \cref{assump:shape_regular_mapping}, the singular values of \(\J_k(\bm{\xi})\) satisfy
    \[
        c_J h_k \le \sigma_{\min}\!\bigl(\J_k(\bm{\xi})\bigr)
        \le \sigma_{\max}\!\bigl(\J_k(\bm{\xi})\bigr) \le C_J h_k,
        \qquad \bm{\xi}\in\hat\Omega.
    \]
    Hence, at each element node \(\bm{\xi}^{(m)}\),
    \[
        |\J_k^{(m)}|
        =
        \bigl|\det(\J_k(\bm{\xi}^{(m)}))\bigr|
        =
        \prod_{r=1}^d \sigma_r\!\bigl(\J_k(\bm{\xi}^{(m)})\bigr)
        =
        \fn{O}(h_k^d).
    \]

    \medskip\noindent
    \textbf{Scaling of \(\H_k\).}
    Since \([\H_k]_{mm} = [\H]_{mm}\,|\J_k^{(m)}|\) with \([\H]_{mm}=\fn{O}(1)\), we obtain
    \(
        \|\H_k\|_2 = \fn{O}(h_k^d).
    \)
    Moreover, the lower bound on the singular values of \(\J_k(\bm{\xi})\) implies
    \(
        |\J_k^{(m)}|^{-1} = \fn{O}(h_k^{-d}),
    \)
    and therefore
    \(
        \|\H_k^{-1}\|_2 = \fn{O}(h_k^{-d}).
    \)

    \medskip\noindent
    \textbf{Scaling of \(\N_{x_i\gamma k}\).}
    Let \(\bm{\nu}_i\in\mathbb{R}^d\) denote the \(i\)-th column of the identity matrix, \(\I_d\). Then, for \(i,j\in\{1,\dots,d\}\), the Cauchy--Schwarz inequality gives
    \begin{equation*}
        \left|\frac{\partial \xi_j}{\partial x_i}(\bm{\xi}^{(m)})\right|
        =
        \left|\bm{\nu}_{j}^T\J_k\left(\bm{\xi}^{(m)}\right)^{-1}\bm{\nu}_{i}\right|
        \le
        \|\bm{\nu}_{j}\|\,\left\|\J_k(\bm{\xi}^{(m)})^{-1}\right\|\,\|\bm{\nu}_{i}\|
        =
        \fn{O}(h_k^{-1}).
    \end{equation*}
    Noting the terms in \(\N_{x_i\gamma k}\) and combining \(|\J_k^{(m)}|=\fn{O}(h_k^d)\) and \(|\partial\xi/\partial x|=\fn{O}(h_k^{-1})\) gives
    \(
        [\N_{x_i\gamma k}]_{mm}=\fn{O}(h_k^{d-1}),
    \)
    and hence
    \(
        \|\N_{x_i\gamma k}\|_2=\fn{O}(h_k^{d-1}).
    \)

    \medskip\noindent
    \textbf{Scaling of \(\E_{x_i k}\).}
    By construction
    \(
        \E_{x_i k}=\sum_{\gamma\in\Gamma_{k}} \R_{\gamma k}^T\B_\gamma \N_{x_i\gamma k}\R_{\gamma k},
    \)
    where \(\R_{\gamma k}\) and \(\B_\gamma\) are independent of \(h\), thus it follows that
    \(
        \|\E_{x_i k}\|_2=\fn{O}(h_k^{d-1}).
    \)

    \medskip\noindent
    \textbf{Scaling of \(\mathsf{S}_{x_i k}\) and \(\Q_{x_i k}\).}
    Each term in \(\mathsf{S}_{x_i k}\) is of the form
    \(
        |\J_k^{(m)}|
        \frac{\partial \xi_j}{\partial x_i}(\bm{\xi}^{(m)})
        [\Q_{\xi_j}]_{mn}.
    \)
    Since \(|\J_k^{(m)}|=\fn{O}(h_k^d)\), \(|\partial\xi/\partial x|=\fn{O}(h_k^{-1})\), and \([\Q_{\xi_j}]_{mn}=\fn{O}(1)\), each entry of \(\mathsf{S}_{x_i k}\) is \(\fn{O}(h_k^{d-1})\). The number of local nodes, \(n_{p}\), is fixed, thus we have
    \(
        \|\mathsf{S}_{x_i k}\|_2=\fn{O}(h_k^{d-1}).
    \)
    Together with \(\|\E_{x_i k}\|_2=\fn{O}(h_k^{d-1})\), we conclude that
    \(
        \|\Q_{x_i k}\|_2=\fn{O}(h_k^{d-1}).
    \)

    \medskip\noindent
    \textbf{Scaling of \(\D_{x_i k}\).}
    Since \(\D_{x_i k}=\H_k^{-1}\Q_{x_i k}\), with
    \(
        \|\H_k^{-1}\|_2=\fn{O}(h_k^{-d})
    \)
    and
    \(
        \|\Q_{x_i k}\|_2=\fn{O}(h_k^{d-1}),
    \)
    we have
    \(
        \|\D_{x_i k}\|_2=\fn{O}(h_k^{-1}).
    \)
    
    Finally, since there exist \(c_1,C_1,c_2,C_2>0\), independent of \(k\), \(h\), and \(m\), such that \(c_1 h_k^d \le [\H_k]_{mm} \le C_1 h_k^d\) and, by \cref{assump:mesh}, \(c_2 h^d \le [\H_k]_{mm}=w_{k,m} \le C_2 h^d\), it follows that \(h_k\) and \(h\) are equivalent, and thus every \(\fn{O}(h_k^\alpha)\) bound above may be written as \(\fn{O}(h^\alpha)\).
\end{proof}

\end{appendices}


\bibliography{references}


\begin{thebibliography}{27}
\ifx \bisbn   \undefined \def \bisbn  #1{ISBN #1}\fi
\ifx \binits  \undefined \def \binits#1{#1}\fi
\ifx \bauthor  \undefined \def \bauthor#1{#1}\fi
\ifx \batitle  \undefined \def \batitle#1{#1}\fi
\ifx \bjtitle  \undefined \def \bjtitle#1{#1}\fi
\ifx \bvolume  \undefined \def \bvolume#1{\textbf{#1}}\fi
\ifx \byear  \undefined \def \byear#1{#1}\fi
\ifx \bissue  \undefined \def \bissue#1{#1}\fi
\ifx \bfpage  \undefined \def \bfpage#1{#1}\fi
\ifx \blpage  \undefined \def \blpage #1{#1}\fi
\ifx \burl  \undefined \def \burl#1{\textsf{#1}}\fi
\ifx \doiurl  \undefined \def \doiurl#1{\url{https://doi.org/#1}}\fi
\ifx \betal  \undefined \def \betal{\textit{et al.}}\fi
\ifx \binstitute  \undefined \def \binstitute#1{#1}\fi
\ifx \binstitutionaled  \undefined \def \binstitutionaled#1{#1}\fi
\ifx \bctitle  \undefined \def \bctitle#1{#1}\fi
\ifx \beditor  \undefined \def \beditor#1{#1}\fi
\ifx \bpublisher  \undefined \def \bpublisher#1{#1}\fi
\ifx \bbtitle  \undefined \def \bbtitle#1{#1}\fi
\ifx \bedition  \undefined \def \bedition#1{#1}\fi
\ifx \bseriesno  \undefined \def \bseriesno#1{#1}\fi
\ifx \blocation  \undefined \def \blocation#1{#1}\fi
\ifx \bsertitle  \undefined \def \bsertitle#1{#1}\fi
\ifx \bsnm \undefined \def \bsnm#1{#1}\fi
\ifx \bsuffix \undefined \def \bsuffix#1{#1}\fi
\ifx \bparticle \undefined \def \bparticle#1{#1}\fi
\ifx \barticle \undefined \def \barticle#1{#1}\fi
\bibcommenthead
\ifx \bconfdate \undefined \def \bconfdate #1{#1}\fi
\ifx \botherref \undefined \def \botherref #1{#1}\fi
\ifx \url \undefined \def \url#1{\textsf{#1}}\fi
\ifx \bchapter \undefined \def \bchapter#1{#1}\fi
\ifx \bbook \undefined \def \bbook#1{#1}\fi
\ifx \bcomment \undefined \def \bcomment#1{#1}\fi
\ifx \oauthor \undefined \def \oauthor#1{#1}\fi
\ifx \citeauthoryear \undefined \def \citeauthoryear#1{#1}\fi
\ifx \endbibitem  \undefined \def \endbibitem {}\fi
\ifx \bconflocation  \undefined \def \bconflocation#1{#1}\fi
\ifx \arxivurl  \undefined \def \arxivurl#1{\textsf{#1}}\fi
\csname PreBibitemsHook\endcsname

\bibitem[\protect\citeauthoryear{Hicken et~al.}{2016}]{hicken2016multidimensional}
\begin{barticle}
\bauthor{\bsnm{Hicken}, \binits{J.E.}},
\bauthor{\bsnm{Del Rey~Fern{\'a}ndez}, \binits{D.C.}},
\bauthor{\bsnm{Zingg}, \binits{D.W.}}:
\batitle{Multidimensional summation-by-parts operators: General theory and application to simplex elements}.
\bjtitle{SIAM Journal on Scientific Computing}
\bvolume{38}(\bissue{4}),
\bfpage{1935}--\blpage{1958}
(\byear{2016})
\end{barticle}
\endbibitem

\bibitem[\protect\citeauthoryear{Hicken}{2020}]{hicken2020entropy}
\begin{barticle}
\bauthor{\bsnm{Hicken}, \binits{J.E.}}:
\batitle{Entropy-stable, high-order summation-by-parts discretizations without interface penalties}.
\bjtitle{Journal of Scientific Computing}
\bvolume{82}(\bissue{2}),
\bfpage{50}
(\byear{2020})
\end{barticle}
\endbibitem

\bibitem[\protect\citeauthoryear{Strang}{1964}]{strang1964accurate}
\begin{barticle}
\bauthor{\bsnm{Strang}, \binits{G.}}:
\batitle{Accurate partial difference methods {II: Non-linear} problems}.
\bjtitle{Numerische Mathematik}
\bvolume{6}(\bissue{1}),
\bfpage{37}--\blpage{46}
(\byear{1964})
\end{barticle}
\endbibitem

\bibitem[\protect\citeauthoryear{Gassner et~al.}{2022}]{gassner2022stability}
\begin{barticle}
\bauthor{\bsnm{Gassner}, \binits{G.J.}},
\bauthor{\bsnm{Sv{\"a}rd}, \binits{M.}},
\bauthor{\bsnm{Hindenlang}, \binits{F.J.}}:
\batitle{Stability issues of entropy-stable and/or split-form high-order schemes}.
\bjtitle{Journal of Scientific Computing}
\bvolume{90}(\bissue{3}),
\bfpage{1}--\blpage{36}
(\byear{2022})
\end{barticle}
\endbibitem

\bibitem[\protect\citeauthoryear{Kreiss and Lorenz}{1989}]{kreiss1989initial}
\begin{bbook}
\bauthor{\bsnm{Kreiss}, \binits{H.-O.}},
\bauthor{\bsnm{Lorenz}, \binits{J.}}:
\bbtitle{Initial-Boundary Value Problems and the {Navier-Stokes} Equations}
vol. \bseriesno{136}.
\bpublisher{Academic Press},
\blocation{San Diego, CA}
(\byear{1989})
\end{bbook}
\endbibitem

\bibitem[\protect\citeauthoryear{Mishra and Sv{\"a}rd}{2010}]{mishra2010stability}
\begin{barticle}
\bauthor{\bsnm{Mishra}, \binits{S.}},
\bauthor{\bsnm{Sv{\"a}rd}, \binits{M.}}:
\batitle{On stability of numerical schemes via frozen coefficients and the magnetic induction equations}.
\bjtitle{BIT Numerical Mathematics}
\bvolume{50}(\bissue{1}),
\bfpage{85}--\blpage{108}
(\byear{2010})
\end{barticle}
\endbibitem

\bibitem[\protect\citeauthoryear{Zhang and Shu}{2004}]{zhang2004error}
\begin{barticle}
\bauthor{\bsnm{Zhang}, \binits{Q.}},
\bauthor{\bsnm{Shu}, \binits{C.-W.}}:
\batitle{Error estimates to smooth solutions of {Runge--Kutta} discontinuous {Galerkin} methods for scalar conservation laws}.
\bjtitle{SIAM Journal on Numerical Analysis}
\bvolume{42}(\bissue{2}),
\bfpage{641}--\blpage{666}
(\byear{2004})
\end{barticle}
\endbibitem

\bibitem[\protect\citeauthoryear{Zhang and Shu}{2006}]{zhang2006error}
\begin{barticle}
\bauthor{\bsnm{Zhang}, \binits{Q.}},
\bauthor{\bsnm{Shu}, \binits{C.-W.}}:
\batitle{Error estimates to smooth solutions of {Runge--Kutta} discontinuous {Galerkin} method for symmetrizable systems of conservation laws}.
\bjtitle{SIAM Journal on Numerical Analysis}
\bvolume{44}(\bissue{4}),
\bfpage{1703}--\blpage{1720}
(\byear{2006})
\end{barticle}
\endbibitem

\bibitem[\protect\citeauthoryear{Huang and Shu}{2017}]{huang2017error}
\begin{barticle}
\bauthor{\bsnm{Huang}, \binits{J.}},
\bauthor{\bsnm{Shu}, \binits{C.-W.}}:
\batitle{Error estimates to smooth solutions of semi-discrete discontinuous {Galerkin} methods with quadrature rules for scalar conservation laws}.
\bjtitle{Numerical Methods for Partial Differential Equations}
\bvolume{33}(\bissue{2}),
\bfpage{467}--\blpage{488}
(\byear{2017})
\end{barticle}
\endbibitem

\bibitem[\protect\citeauthoryear{Nordstr{\"o}m}{2008}]{nordstrom2008error}
\begin{barticle}
\bauthor{\bsnm{Nordstr{\"o}m}, \binits{J.}}:
\batitle{Error bounded schemes for time-dependent hyperbolic problems}.
\bjtitle{SIAM Journal on Scientific Computing}
\bvolume{30}(\bissue{1}),
\bfpage{46}--\blpage{59}
(\byear{2008})
\end{barticle}
\endbibitem

\bibitem[\protect\citeauthoryear{Kopriva et~al.}{2017}]{kopriva2017error}
\begin{barticle}
\bauthor{\bsnm{Kopriva}, \binits{D.A.}},
\bauthor{\bsnm{Nordstr{\"o}m}, \binits{J.}},
\bauthor{\bsnm{Gassner}, \binits{G.J.}}:
\batitle{Error boundedness of discontinuous {Galerkin} spectral element approximations of hyperbolic problems}.
\bjtitle{Journal of Scientific Computing}
\bvolume{72}(\bissue{1}),
\bfpage{314}--\blpage{330}
(\byear{2017})
\end{barticle}
\endbibitem

\bibitem[\protect\citeauthoryear{{\"O}ffner and Ranocha}{2019}]{offner2019error}
\begin{barticle}
\bauthor{\bsnm{{\"O}ffner}, \binits{P.}},
\bauthor{\bsnm{Ranocha}, \binits{H.}}:
\batitle{Error boundedness of discontinuous {Galerkin} methods with variable coefficients}.
\bjtitle{Journal of Scientific Computing}
\bvolume{79}(\bissue{3}),
\bfpage{1572}--\blpage{1607}
(\byear{2019})
\end{barticle}
\endbibitem

\bibitem[\protect\citeauthoryear{Chen and Shu}{2017}]{chen2017entropy}
\begin{barticle}
\bauthor{\bsnm{Chen}, \binits{T.}},
\bauthor{\bsnm{Shu}, \binits{C.-W.}}:
\batitle{Entropy stable high order discontinuous {Galerkin} methods with suitable quadrature rules for hyperbolic conservation laws}.
\bjtitle{Journal of Computational Physics}
\bvolume{345},
\bfpage{427}--\blpage{461}
(\byear{2017})
\end{barticle}
\endbibitem

\bibitem[\protect\citeauthoryear{Worku et~al.}{2024}]{worku2024quadrature}
\begin{barticle}
\bauthor{\bsnm{Worku}, \binits{Z.A.}},
\bauthor{\bsnm{Hicken}, \binits{J.E.}},
\bauthor{\bsnm{Zingg}, \binits{D.W.}}:
\batitle{Quadrature rules on triangles and tetrahedra for multidimensional summation-by-parts operators}.
\bjtitle{Journal of Scientific Computing}
\bvolume{101}(\bissue{1}),
\bfpage{24}
(\byear{2024})
\end{barticle}
\endbibitem

\bibitem[\protect\citeauthoryear{Del Rey~Fern{\'a}ndez et~al.}{2014}]{fernandez2014review}
\begin{barticle}
\bauthor{\bsnm{Del Rey~Fern{\'a}ndez}, \binits{D.C.}},
\bauthor{\bsnm{Hicken}, \binits{J.E.}},
\bauthor{\bsnm{Zingg}, \binits{D.W.}}:
\batitle{Review of summation-by-parts operators with simultaneous approximation terms for the numerical solution of partial differential equations}.
\bjtitle{Computers \& Fluids}
\bvolume{95},
\bfpage{171}--\blpage{196}
(\byear{2014})
\end{barticle}
\endbibitem

\bibitem[\protect\citeauthoryear{Sv{\"a}rd and Nordstr{\"o}m}{2014}]{svard2014review}
\begin{barticle}
\bauthor{\bsnm{Sv{\"a}rd}, \binits{M.}},
\bauthor{\bsnm{Nordstr{\"o}m}, \binits{J.}}:
\batitle{Review of summation-by-parts schemes for initial--boundary-value problems}.
\bjtitle{Journal of Computational Physics}
\bvolume{268},
\bfpage{17}--\blpage{38}
(\byear{2014})
\end{barticle}
\endbibitem

\bibitem[\protect\citeauthoryear{Olsson and Oliger}{1994}]{olsson1994energy}
\begin{botherref}
\oauthor{\bsnm{Olsson}, \binits{P.}},
\oauthor{\bsnm{Oliger}, \binits{J.}}:
Energy and maximum norm estimates for nonlinear conservation laws.
Technical Report 94-01,
The Research Institute of Advanced Computer Science
(1994)
\end{botherref}
\endbibitem

\bibitem[\protect\citeauthoryear{Gerritsen and Olsson}{1996}]{gerritsen1996designing}
\begin{barticle}
\bauthor{\bsnm{Gerritsen}, \binits{M.}},
\bauthor{\bsnm{Olsson}, \binits{P.}}:
\batitle{Designing an efficient solution strategy for fluid flows: 1. {A} stable high order finite difference scheme and sharp shock resolution for the {Euler} equations}.
\bjtitle{Journal of Computational Physics}
\bvolume{129}(\bissue{2}),
\bfpage{245}--\blpage{262}
(\byear{1996})
\end{barticle}
\endbibitem

\bibitem[\protect\citeauthoryear{Yee et~al.}{2000}]{yee2000entropy}
\begin{barticle}
\bauthor{\bsnm{Yee}, \binits{H.C.}},
\bauthor{\bsnm{Vinokur}, \binits{M.}},
\bauthor{\bsnm{Djomehri}, \binits{M.J.}}:
\batitle{Entropy splitting and numerical dissipation}.
\bjtitle{Journal of Computational Physics}
\bvolume{162}(\bissue{1}),
\bfpage{33}--\blpage{81}
(\byear{2000})
\end{barticle}
\endbibitem

\bibitem[\protect\citeauthoryear{Sj{\"o}green and Yee}{2019}]{sjogreen2019entropy}
\begin{barticle}
\bauthor{\bsnm{Sj{\"o}green}, \binits{B.}},
\bauthor{\bsnm{Yee}, \binits{H.C.}}:
\batitle{Entropy stable method for the {Euler} equations revisited: central differencing via entropy splitting and {SBP}}.
\bjtitle{Journal of Scientific Computing}
\bvolume{81}(\bissue{3}),
\bfpage{1359}--\blpage{1385}
(\byear{2019})
\end{barticle}
\endbibitem

\bibitem[\protect\citeauthoryear{Worku and Zingg}{2024}]{worku2023entropy}
\begin{barticle}
\bauthor{\bsnm{Worku}, \binits{Z.A.}},
\bauthor{\bsnm{Zingg}, \binits{D.W.}}:
\batitle{Entropy-split multidimensional summation-by-parts discretization of the {Euler and compressible Navier-Stokes} equations}.
\bjtitle{Journal of Computational Physics}
\bvolume{502},
\bfpage{112821}
(\byear{2024})
\end{barticle}
\endbibitem

\bibitem[\protect\citeauthoryear{Johnson}{1990}]{johnson1990matrix}
\begin{bbook}
\bauthor{\bsnm{Johnson}, \binits{C.R.}}:
\bbtitle{Matrix Theory and Applications}
vol. \bseriesno{40}.
\bpublisher{American Mathematical Society},
\blocation{Providence, RI}
(\byear{1990})
\end{bbook}
\endbibitem

\bibitem[\protect\citeauthoryear{Bhatia}{1997}]{bhatia1996matrix}
\begin{bbook}
\bauthor{\bsnm{Bhatia}, \binits{R.}}:
\bbtitle{Matrix Analysis}.
\bsertitle{Graduate Texts in Mathematics},
vol. \bseriesno{169}.
\bpublisher{Springer},
\blocation{New York, NY}
(\byear{1997})
\end{bbook}
\endbibitem

\bibitem[\protect\citeauthoryear{Khalil}{2002}]{khalil2002nonlinear}
\begin{bbook}
\bauthor{\bsnm{Khalil}, \binits{H.K.}}:
\bbtitle{Nonlinear Systems},
\bedition{3}rd edn.
\bpublisher{Prentice Hall},
\blocation{Upper Saddle River, NJ}
(\byear{2002})
\end{bbook}
\endbibitem

\bibitem[\protect\citeauthoryear{Nordstr{\"o}m}{2022}]{nordstrom2022skew}
\begin{barticle}
\bauthor{\bsnm{Nordstr{\"o}m}, \binits{J.}}:
\batitle{A skew-symmetric energy and entropy stable formulation of the compressible {Euler} equations}.
\bjtitle{Journal of Computational Physics}
\bvolume{470},
\bfpage{111573}
(\byear{2022})
\end{barticle}
\endbibitem

\bibitem[\protect\citeauthoryear{Abgrall}{2018}]{abgrall2018general}
\begin{barticle}
\bauthor{\bsnm{Abgrall}, \binits{R.}}:
\batitle{A general framework to construct schemes satisfying additional conservation relations. {Application} to entropy conservative and entropy dissipative schemes}.
\bjtitle{Journal of Computational Physics}
\bvolume{372},
\bfpage{640}--\blpage{666}
(\byear{2018})
\end{barticle}
\endbibitem

\bibitem[\protect\citeauthoryear{Crean et~al.}{2018}]{crean2018entropy}
\begin{barticle}
\bauthor{\bsnm{Crean}, \binits{J.}},
\bauthor{\bsnm{Hicken}, \binits{J.E.}},
\bauthor{\bsnm{Del Rey~Fern{\'a}ndez}, \binits{D.C.}},
\bauthor{\bsnm{Zingg}, \binits{D.W.}},
\bauthor{\bsnm{Carpenter}, \binits{M.H.}}:
\batitle{Entropy-stable summation-by-parts discretization of the {E}uler equations on general curved elements}.
\bjtitle{Journal of Computational Physics}
\bvolume{356},
\bfpage{410}--\blpage{438}
(\byear{2018})
\end{barticle}
\endbibitem

\end{thebibliography}

\end{document}